\newcommand{\Real}{\mathbb{R}}
\newcommand{\N}{\mathbb{N}}
\newcommand{\Z}{\mathbb{Z}}
\newcommand{\Tensor}[1]{{\mathscr{#1}}}
\newcommand{\Matrix}[1]{\mathsf{#1}}
\newcommand{\ball}[1][]{#1{{\mathbb{B}}}}
\newcommand{\ballrx}[3][]{\ball[#1]_{#2} (#3)}
\newcommand{\set}[3][]{#1\{ #2~:~#3 #1\}}
\newcommand{\dist}[3][]{\mathrm{dist} #1( #2, #3 #1)}
\newcommand{\Proj}[3][]{\Pi_{#2} #1( #3 #1)}
\newcommand{\NormalSup}[4][]{N^{#2}_{#3} #1( #4 #1)}
\newcommand{\NormalProx}[3][]{\NormalSup[#1]{\mathrm{prox}}{#2}{#3}}
\newcommand{\NormalLim}[3][]{\NormalSup[#1]{\mathrm{lim}}{#2}{#3}}
\newcommand{\interior}{\mathrm{int}}
\newcommand{\boundary}{\mathrm{bdr}}
\newcommand{\norm}[2][]{#1| #2 #1|}
\newcommand{\Norm}[3][]{#1\| #2 #1\|_{\mathrm{#3}}}
\newcommand{\dotp}[3][]{#1\langle #2, #3 #1\rangle}
\newcommand{\Dotp}[4][]{#1\langle #2, #3 #1\rangle_{\mathrm{#4}}}
\newcommand{\rank}[2][]{\mathrm{rank} #1( #2 #1)}
\newcommand{\ttrank}[2][]{\mathrm{rank_{TT}} #1( #2 #1)}
\newcommand{\Expectation}{\mathbb{E}}
\newtheorem{theorem}{Theorem}[section]
\newtheorem{lemma}[theorem]{Lemma}
\newtheorem{corollary}[theorem]{Corollary}
\newtheorem{remark}[theorem]{Remark}
\renewcommand{\thefootnote}{\fnsymbol{footnote}}
\title{Quasioptimal alternating projections and their use in low-rank approximation of matrices and tensors\let\thefootnote\relax\footnotetext{This research was funded in whole by the Austrian Science Fund (FWF) \href{https://doi.org/10.55776/F65}{10.55776/F65}. For open access purposes, the author has applied a CC BY public copyright license to any author-accepted manuscript version arising from this~submission.}}
\author{Stanislav Budzinskiy\footnote{Faculty of Mathematics, University of Vienna, Kolingasse 14-16, 1090 Vienna, Austria (\href{mailto:stanislav.budzinskiy@univie.ac.at}{stanislav.budzinskiy\allowbreak@univie.ac.at}).}}
\date{}
\begin{document}
\maketitle

\renewcommand{\thefootnote}{\arabic{footnote}}
\setcounter{footnote}{0}

\begin{abstract}
We study the convergence of specific inexact alternating projections for two non-convex sets in a Euclidean space. The $\sigma$-quasioptimal metric projection ($\sigma \geq 1$) of a point $x$ onto a set $A$ consists of points in $A$ the distance to which is at most $\sigma$ times larger than the minimal distance $\dist{x}{A}$. We prove that quasioptimal alternating projections, when one or both projections are quasioptimal, converge locally and linearly for super-regular sets with transversal intersection. The theory is motivated by the successful application of alternating projections to low-rank matrix and tensor approximation. We focus on two problems---nonnegative low-rank approximation and low-rank approximation in the maximum norm---and develop fast alternating-projection algorithms for matrices and tensor trains based on cross approximation and acceleration techniques. The numerical experiments confirm that the proposed methods are efficient and suggest that they can be used to regularise various low-rank computational routines.
\end{abstract}

{\bf Keywords:} alternating projections, quasioptimality, super-regular sets, matrices and tensors, low-rank approximation, nonnegativity, maximum norm\\

{\bf MSC2020:} 15A23, 15A69, 49M20, 65F55, 65K10, 90C30

\begin{refsection}
\section{Introduction}

\subsection{Alternating projections}
A recurring problem in mathematics is to find a point in a subset $A$ of a Euclidean space $H$. Consider a simple and illustrative example: solve a system of linear equations. When the matrix of the system is square and invertible, the solution set is a singleton. For an underdetermined system with a full-rank matrix, the solutions form an affine subspace, and we typically choose the (unique) solution of the smallest norm.

This is a particular case of a so-called \emph{metric projection} of a point $x \in H$ onto a set:
\begin{equation}
\label{eq:mp}
    \Proj{A}{x} = \set[\big]{a \in A}{\norm{x - a} = \inf\nolimits_{\hat{a} \in A} \norm{x - \hat{a}}}.
\end{equation}
Indeed, the least-norm solution to a system of linear equations corresponds to the metric projection of the zero vector onto the affine subspace of all solutions. Moreover, this solution can be expressed in closed form via the QR decomposition of the matrix.

For more complex sets, it is no longer evident how to \emph{represent} the metric projection. Suppose we want to find a point in the intersection of two `simple' sets. Von Neumann proved \cite[Thm.~13.7]{von1950functional} that if $A$ and $B$ are linear subspaces then the sequence of \emph{alternating projections}~(AP)
\begin{equation*}
    b_{n+1} \in \Proj{B}{a_n}, \quad a_{n+1} \in \Proj{A}{b_{n+1}}, \quad n \in \N_0,
\end{equation*}
started with $a_0 \in \Proj{A}{x}$ converges to $\Proj{A \cap B}{x}$ for every $x \in H$. The global convergence to $A \cap B$, but not to $\Proj{A \cap B}{x}$, persists for closed convex sets \cite{cheney1959proximity, bregman1965method}, and there is local convergence to $A \cap B$ for a class of closed non-convex sets \cite{lewis2009local}.

This basic framework finds numerous practical applications in systems of linear inequalities \cite{agmon1954relaxation, motzkin1954relaxation}, overdetermined systems of linear equations \cite{strohmer2009randomized}, design of frames \cite{tropp2005designing}, phase retrieval \cite{bauschke2002phase}, and finance \cite{higham2002computing}, for example.

\subsection{Low-rank approximation with constraints}
Our practical interest is in low-rank approximation of matrices and higher-order tensors (see, e.g., \cite{ballard2025tensor}) with constraints \cite{zhang2003optimal, grubivsic2007efficient, grussler2018low}. The best low-rank approximation of a matrix in the Frobenius norm is given by its truncated singular value decomposition (SVD), and we can impose additional constraints with the AP \cite{chu2003structured}. For tensors, there are no general ways to represent even the best rank-one approximation \cite{de2000best, zhang2001rank}, and the minimisation of the Euclidean distance has a complicated optimisation landscape.

It is still possible to use the AP to compute constrained low-rank approximations for tensors in subspace-based formats such as Tucker and tensor train \cite{ballard2025tensor}. Indeed, the set of order-$d$ tensors with low Tucker ranks is the intersection of $d$ sets of low-rank matrices corresponding to the $d$ unfoldings. Therefore, the projections can be alternated between the $d$ low-rank sets and the constraint set; this approach was taken in \cite{jiang2023nonnegative} to compute low-rank nonnegative approximations. But there is a drawback: we would never actually encounter a low-rank tensor during such iterations, it is reached only in the limit. This is in contrast to the matrix case, where a sequence of low-rank matrices converges to the constraint set and the iterations can be halted once the constraints are satisfied to a `sufficient extent'.

While it is infeasible to compute the metric projection of an order-$d$ tensor onto the set of tensors with low Tucker ranks, the higher-order SVD (HOSVD, \cite{de2000multilinear}) method constructs a \emph{good} approximation that is at most $\sqrt{d}$ times worse than the best approximation. In \cite{sultonov2023low}, the HOSVD was used in place of the metric projection to compute low-rank nonnegative approximations of tensors in the AP fashion. Similar to the matrix case, this approach produces a sequence of low-rank tensors that, according to the numerical experiments, converges to the nonnegative~orthant.

\subsection{Inexact metric projections}
The above example of low-rank tensor approximation highlights the need to generalise the notion of metric projections in order to admit points that might not attain the minimal distance, but are within a constant factor from it.

This generalisation is also inevitable once we begin to consider the practical side of computations: every projection we compute numerically is inherently \emph{inexact}. There are three main sources of inexactness: the floating-point arithmetic, the termination of iterative algorithms, and the algorithms that are not designed to be exact. 

In low-rank tensor approximation, the three aspects act in unison. At the bottom level, the basic operations of linear algebra are subject to round-off errors in the floating-point arithmetic \cite{higham2002accuracy}. They are then used as building blocks in the numerical algorithms for the SVD, which are iterative and run until the tolerable error is reached \cite{demmel1997applied}. Finally, algorithms similar to the HOSVD use the SVD to compute \emph{quasioptimal} low-rank tensor approximations.

\subsection{Contributions: Brief version}
For $\sigma \geq 1$, we define the \emph{$\sigma$-quasioptimal metric projection} of a point $x \in H$ onto $A$ as
\begin{equation}
\label{eq:qmp}
    \Proj{A}{x; \sigma} = \set{a \in A}{\norm{x-a} \leq \sigma \cdot \inf\nolimits_{\hat{a} \in A} \norm{x - \hat{a}} }.
\end{equation}
The HOSVD yields an element of the $\sqrt{d}$-quasioptimal metric projection onto the set of tensors with low Tucker ranks (assuming that the SVD is exact), and the definition coincides with the (optimal) metric projection $\Proj{A}{x}$ when $\sigma = 1$.

We propose the method of \emph{quasioptimal alternating projections} (QAP) with quasioptimality constants $\sigma_A \geq 1$ and $\sigma_B \geq 1$ and the initial condition $a_0 \in A$:
\begin{equation}
\label{eq:qap}
    b_{n + 1} \in \Proj{B}{a_n; \sigma_B}, \quad a_{n + 1} \in \Proj{A}{b_{n + 1}; \sigma_A}, \quad n \in \N_0.
\end{equation}
Following the ideas of \cite{lewis2009local}, we prove local linear convergence of the sequence to $A \cap B$ when $\sigma_A \geq 1$ with $\sigma_B = 1$ (Theorem~\ref{theorem:1qoptimal_convergence} and Corollary~\ref{corollary:1qoptimal_convergence_2sr}) and when $\sigma_A > 1$ with $\sigma_B > 1$ (Theorem~\ref{theorem:2qoptimal_convergence}). In both cases, we estimate the rate of convergence and the admissible values of $\sigma_A$ and $\sigma_B$. Our results also show that the sequence \eqref{eq:qap} converges to the quasioptimal metric projections of $a_0$ onto $B$ and $A \cap B$; see also Theorem~\ref{theorem:ap_as_qmp}.

Consider an example with $A$ and $B$ being the coordinate axes in the plane (Fig.~\ref{fig:2qopt_perp}). The Pythagorean theorem ensures that 
\begin{equation*}
    \frac{\norm{a_1}}{\norm{a_0}} \leq \sqrt{\sigma_A^2 -1} \cdot \sqrt{\sigma_B^2 - 1}
\end{equation*}
for every $b_1 \in \Proj{B}{a_0; \sigma_B}$ and $a_1 \in \Proj{A}{b_1; \sigma_A}$. Hence, the iterations converge if the right-hand side is below one. Theorem~\ref{theorem:2qoptimal_convergence} captures this behaviour and extends it to non-convex sets with a non-orthogonal intersection. Note that the method might still converge even if the right-hand side is greater than one: everything depends on the particular choice of $b_{n + 1} \in \Proj{B}{a_n; \sigma_B}$ and $a_{n + 1} \in \Proj{A}{b_{n + 1}; \sigma_A}$. 

\begin{figure}[!t]
\centering
    \includegraphics[width=0.9\linewidth]{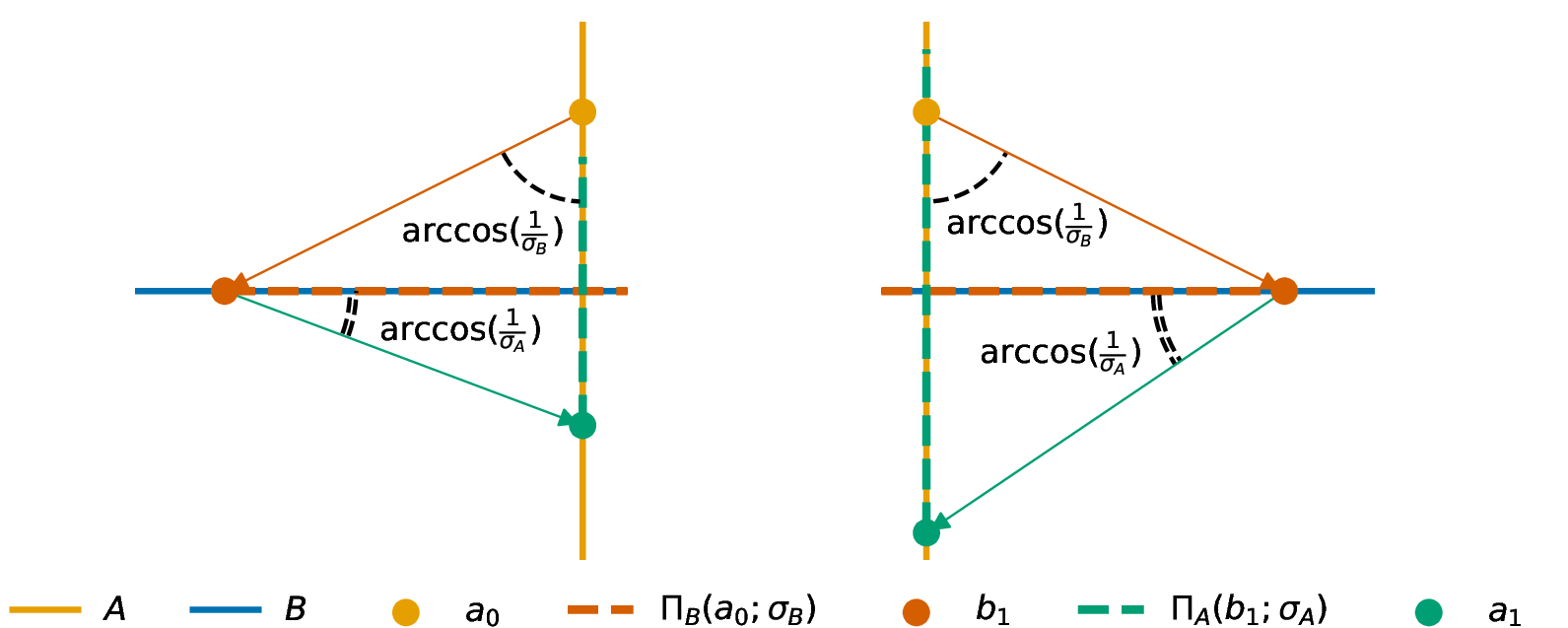}
\caption{Quasioptimal alternating projections for two perpendicular lines in the plane converge when $\sqrt{\sigma_A^2 -1} \cdot \sqrt{\sigma_B^2 - 1} < 1$ and can diverge otherwise.}
\label{fig:2qopt_perp}
\end{figure}

In numerical experiments, we compute constrained low-rank tensor approximations with the QAP. First, we use them to impose nonnegativity on low-rank approximations. Second, we develop a method to obtain low-rank approximations with small \emph{entrywise} errors and employ it to approximate orthogonal matrices.

\subsection{Outline}

Our article could be of interest to two different communities. To reach out to both, we deliberately choose to make the presentation detailed and include the necessary background material on non-smooth optimisation and low-rank approximation.

The first, theoretical part of the paper addresses the convergence of the QAP \eqref{eq:qap}. In Section~\ref{sec:prelim}, we collect some preliminary results that are useful in the convergence analysis of the (Q)AP. We then revisit the main contributions of the paper in more details in Section~\ref{sec:contributions} and compare them with the existing literature in Section~\ref{sec:related}. The properties of quasioptimal metric projections are studied in Section~\ref{sec:qoptimal}. The following Section~\ref{sec:convergence} is devoted to the QAP and their local convergence for non-convex sets.

The second, numerical part considers two applications of the QAP. We present an introduction low-rank matrix and tensor approximation in Subsections~\ref{subsec:lrm}-\ref{subsec:qoptimal_lowrank}. In Subsection~\ref{subsec:nonnegative}, we continue the investigation of \cite{jiang2023nonnegative, sultonov2023low} on using the (Q)AP to compute nonnegative low-rank approximation. We introduce a new application of the (Q)AP to low-rank approximation in entrywise norms in Subsection~\ref{subsec:maximum}.

We formulate possible directions of further research into the QAP in Section~\ref{sec:conclusion}. The appendix contains some auxiliary results related to our theory. Additional numerical experiments and an overview of the optimality properties of low-rank approximation methods can be found in the supplementary material.
\section{Preliminaries and notation}
\label{sec:prelim}

Throughout the text, $H$ is a Euclidean space with inner product $\dotp{\cdot}{\cdot}$ and norm $\norm{\cdot}$. The open unit ball at the origin and the open ball of radius $r > 0$ centred at $x \in H$ are written as $\ball = \set{x \in H}{\norm{x} < 1}$ and $\ballrx{r}{x} = x + r \ball$. We say that a set $\mathcal{V} \subseteq H$ is a neighbourhood of $x \in H$ if $\mathcal{V}$ is open and contains $x$. For a set $A \subseteq H$, let $\overline{A}$ denote its closure, $\interior A$ its interior and $\boundary A$ its boundary. A non-empty set $A$ is called (i)~\emph{locally closed} at $\hat{a} \in A$ if there exists a neighbourhood $\mathcal{V} \subseteq H$ of $\hat{a}$ such that $A \cap \mathcal{V}$ is closed in $\mathcal{V}$; (ii)~\emph{convex} if for every $a_1, a_2 \in A$ and $\alpha \in (0,1)$ their combination $\alpha a_1 + (1 - \alpha) a_2$ is in $A$; (iii)~a \emph{cone} if for every $a \in A$ and $\alpha \geq 0$ their product $\alpha a$ is in $A$.

\subsection{Metric projections}
For a non-empty set $A \subseteq H$ and a point $x \in H$, we define the \textit{distance} from $x$ to $A$,
\begin{equation*}
    \dist{x}{A} = \inf\nolimits_{a \in A} \norm{x - a},
\end{equation*}
and rewrite the definition of the (optimal) metric projection \eqref{eq:mp} as
\begin{equation*}
    \Proj{A}{x} = \set{a \in A}{\norm{x - a} = \dist{x}{A}}.
\end{equation*}
For $x \in A$, the metric projection is a singleton $\Proj{A}{x} = \{ x \}$. In general, however, there can be points $x \in H$ such that $\Proj{A}{x}$ is empty. The following lemma ties the existence of optimal metric projections with the local topological properties of the set.

\begin{lemma}
\label{lemma:locally_closed_local_proj}
Let $A \subseteq H$ be non-empty and $\hat{a} \in A$.
\begin{enumerate}
    \item Let $\mathcal{V} \subseteq H$ be a neighbourhood of $\hat{a}$ such that every $x \in \mathcal{V}$ has a non-empty optimal metric projection $\Proj{A}{x}$. Then $A \cap \mathcal{V}$ is closed in $\mathcal{V}$.
    \item Let $A \cap \ballrx{r}{\hat{a}}$ be closed in $\ballrx{r}{\hat{a}}$. Then for each $x \in \ballrx{r/2}{\hat{a}}$ the optimal metric projection $\Proj{A}{x}$ is non-empty and $\norm{\pi - \hat{a}} \leq 2 \norm{x - \hat{a}} < r$ for every $\pi \in \Proj{A}{x}$.
\end{enumerate}
\end{lemma}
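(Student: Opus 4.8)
The plan is to prove the two implications separately, using elementary topology for the first part and a compactness-plus-minimizing-sequence argument for the second.

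\textbf{Part 1.} I would argue by contradiction. Suppose $A \cap \mathcal{V}$ is not closed in $\mathcal{V}$: then there is a sequence $(a_k) \subseteq A \cap \mathcal{V}$ converging to some $x \in \mathcal{V} \setminus A$. Since $x \in \mathcal{V}$, by hypothesis $\Proj{A}{x}$ is non-empty; pick $\pi \in \Proj{A}{x}$. But the sequence $(a_k)$ shows $\dist{x}{A} \leq \lim_k \norm{x - a_k} = 0$, so $\norm{x - \pi} = \dist{x}{A} = 0$, i.e. $\pi = x$, which forces $x \in A$ — a contradiction. Hence $A \cap \mathcal{V}$ is closed in $\mathcal{V}$.

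\textbf{Part 2.} Fix $x \in \ballrx{r/2}{\hat{a}}$ and set $\rho = \norm{x - \hat{a}} < r/2$. Since $\hat{a} \in A$, we have $\dist{x}{A} \leq \rho$. Take a minimizing sequence $(a_k) \subseteq A$ with $\norm{x - a_k} \to \dist{x}{A}$; without loss of generality $\norm{x - a_k} \leq \rho + \varepsilon$ for any fixed small $\varepsilon > 0$, so for all large $k$ we get $\norm{a_k - \hat{a}} \leq \norm{a_k - x} + \norm{x - \hat{a}} \leq 2\rho + \varepsilon < r$, meaning the tail of $(a_k)$ lies in the closed-in-$\ballrx{r}{\hat{a}}$ set $A \cap \ballrx{r}{\hat{a}}$, which is bounded, hence its closure is compact (finite dimension). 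Passing to a convergent subsequence, the limit $\pi$ lies in $A \cap \ballrx{r}{\hat{a}} \subseteq A$ and satisfies $\norm{x - \pi} = \dist{x}{A}$, so $\pi \in \Proj{A}{x}$ and $\Proj{A}{x} \neq \emptyset$. Finally, for any $\pi \in \Proj{A}{x}$ we have $\norm{x - \pi} = \dist{x}{A} \leq \norm{x - \hat{a}}$, so $\norm{\pi - \hat{a}} \leq \norm{\pi - x} + \norm{x - \hat{a}} \leq 2\norm{x - \hat{a}} < r$.

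\textbf{Main obstacle.} The only subtle point is making the minimizing-sequence tail stay inside $\ballrx{r}{\hat{a}}$ so that local closedness can be invoked; this is exactly where the factor-of-two slack ($\rho < r/2 \Rightarrow 2\rho < r$) is used, and one must be slightly careful that the bound $\norm{x-a_k} \to \dist{x}{A} \le \rho$ only gives $\norm{a_k - \hat a} < r$ eventually (for $k$ large), which is enough since we only need a convergent subsequence. Everything else is routine use of the triangle inequality and sequential compactness in the Euclidean space $H$.
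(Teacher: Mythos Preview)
Your proposal is correct and follows essentially the same approach as the paper's own proof: Part~1 via the observation that a limit point $x$ of $A \cap \mathcal{V}$ has $\dist{x}{A}=0$ and hence equals its projection, and Part~2 via a minimising sequence whose tail is trapped in $\ballrx{r}{\hat{a}}$ by the triangle inequality, followed by Bolzano--Weierstrass and the local closedness hypothesis. The only cosmetic difference is that you frame Part~1 as a proof by contradiction whereas the paper argues directly that every limit point lies in $A$; the content is identical.
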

\begin{proof}
This is a folklore result; we prove it in Appendix~\ref{appendix:implicit_function_theorem} for completeness.
\end{proof}

\subsection{Normal cones}
\label{subsec:cones}
The concept of an `outward' normal to a set is one of the central notions of variational analysis, which is used to formulate optimality criteria for constrained optimisation problems. From the perspective of the AP, we can learn a lot about the relative positions of two sets by comparing their specific `outward' normal directions.

Let $A \subseteq H$ be non-empty and $\hat{a} \in A$. We say that $v \in H$ is a \emph{proximal normal} to $A$ at $\hat{a}$ if there exists $\tau > 0$ such that $\hat{a} \in \Proj{A}{\hat{a} + \tau v}$. Let $\NormalProx{A}{\hat{a}}$ denote the collection of all proximal normals to $A$ at $\hat{a}$: 
\begin{equation*}
    \NormalProx{A}{\hat{a}} = \set{v \in H}{\hat{a} \in \Proj{A}{\hat{a} + \tau v} \text{ for some } \tau > 0}.
\end{equation*}
The set $\NormalProx{A}{\hat{a}}$ is a convex cone, known as the \emph{proximal normal cone} to $A$ at $\hat{a}$. If $\Proj{A}{x}$ is non-empty for some $x \in H$, then $x - \pi \in \NormalProx{A}{\pi}$ for each $\pi \in \Proj{A}{x}$.

Assume, in addition, that $A$ is locally closed at $\hat{a}$. Then $v \in H$ is a \emph{limiting normal} to $A$ at $\hat{a}$ if there exists a sequence of points $(a_n) \subset A$ and a corresponding sequence of proximal normals $(v_n) \subset H$ with $v_n \in \NormalProx{A}{a_n}$ such that $a_n \to \hat{a}$ and $v_n \to v$ as $n \to \infty$. The set $\NormalLim{A}{\hat{a}}$ of all limiting normals to $A$ at $\hat{a}$ is a closed cone and is called the \emph{limiting normal cone} to $A$ at $\hat{a}$.

Clearly, $\NormalProx{A}{\hat{a}} \subseteq \NormalLim{A}{\hat{a}}$. It can also be shown that we can replace proximal normals with limiting normals in the definition of the limiting normal cone:
\begin{equation}
\label{eq:limiting_normal_cone_semicontinuity}
    \NormalLim{A}{\hat{a}} = \set{v \in H}{\text{there are } A \ni a_n \to \hat{a}\text{ and }\NormalLim{A}{a_n} \ni v_n \to v \text{ as } n \to \infty}.
\end{equation}
Another useful property of the limiting normal cone is how it distinguishes between interior and boundary points of the set: $\NormalLim{A}{\hat{a}} = \{ 0 \}$ if and only if $\hat{a} \in \interior A$. Find a more in-depth discussion of proximal and limiting normal cones in \cite{mordukhovich2006variational, rockafellar1998variational}.

\subsection{Super-regular sets}
\label{subsec:superregular}
When we think of normal cones, it is customary to imagine them pointing in the `opposite' direction from the set; indeed, this is what happens for smooth manifolds~and convex sets. In general, this picture can be misleading as a simple example shows.

\begin{figure}[!b]
\centering
    \includegraphics[width=0.9\linewidth]{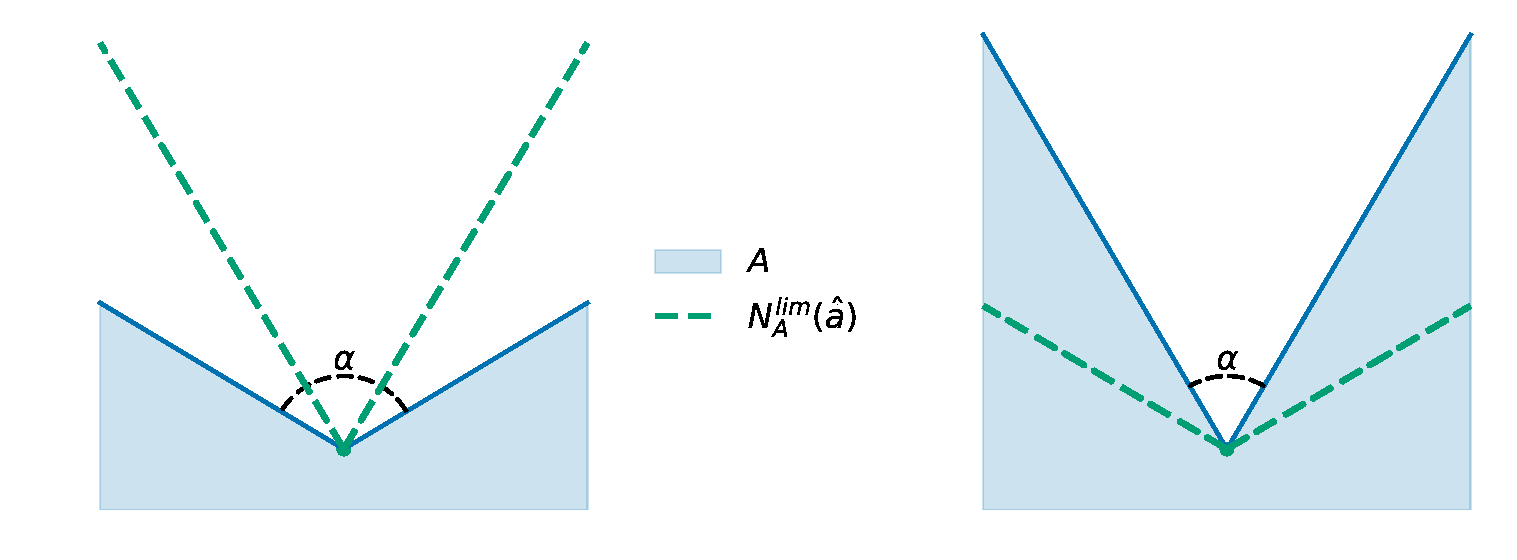}
\caption{Limiting normal cones can point `into' the set.}
\label{fig:non_super_regular}
\end{figure}

Consider a closed concave set $A \subset \Real^2$ with an angle of size $\alpha$ and let $\hat{a}$ be its vertex (see Fig.~\ref{fig:non_super_regular}). When $\alpha > \frac{\pi}{2}$, the limiting normal cone $\NormalLim{A}{\hat{a}}$ does point `away' from $A$, but it still forms acute angles with some of the inward-looking directions. For smaller angles $\alpha \leq \frac{\pi}{2}$, the limiting normals themselves point `into' $A$. 

\begin{remark}
In addition, this example illustrates that the inclusion of normal cones $\NormalProx{A}{\hat{a}} \subset \NormalLim{A}{\hat{a}}$ can be strict, since $\NormalProx{A}{\hat{a}} = \{ 0 \}$.
\end{remark}

A class of sets that do not have inward-looking normals was introduced in \cite{lewis2009local}. Let $A \subseteq H$ be non-empty and locally closed at $\hat{a} \in A$. The set $A$ is called \emph{super-regular} at $\hat{a}$ if for every $\varepsilon > 0$ there exists $\delta > 0$ such that $A \cap \ballrx{\delta}{\hat{a}}$ is closed in $\ballrx{\delta}{\hat{a}}$ and
\begin{equation}
\label{eq:sr_normal_angle}
    \dotp{v}{a' - a} \leq \varepsilon \norm{v} \cdot \norm{a' - a} \quad  \text{for all} \quad a,a' \in A \cap \ballrx{\delta}{\hat{a}} \quad \text{and} \quad v \in \NormalLim{A}{a}.
\end{equation}
Essentially, a set is super-regular at a point if, locally, each limiting normal forms an `almost' obtuse angle with every direction that points into the set. The super-regularity property is shared, among others, by smooth manifolds and convex sets.

In general, the triangle inequality ensures that $\norm{\pi - a} \leq 2 \norm{x - a}$ for $\pi \in \Proj{A}{x}$. When the set is super-regular, the optimal metric projections are nearly \textit{non-expansive}.

\begin{lemma}
\label{lemma:superregular_proj_nonexpansove}
Let $A \subseteq H$ be non-empty and super-regular at $\hat{a} \in A$. Then for every $\varepsilon > 0$ there exists $\delta > 0$ such that for every $x \in \ballrx{\delta/2}{\hat{a}}$ and $\pi \in \Proj{A}{x}$ it holds that
\begin{equation*}
    \norm{\pi - a} \leq (1 + \varepsilon) \cdot \norm{x - a} \quad \text{for every} \quad a \in A \cap \ballrx{\delta}{\hat{a}}.
\end{equation*}
\end{lemma}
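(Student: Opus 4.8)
The plan is to combine three facts established earlier: the existence and boundedness of optimal projections from Lemma~\ref{lemma:locally_closed_local_proj}, the observation that $x-\pi$ is a proximal (hence limiting) normal to $A$ at any $\pi \in \Proj{A}{x}$, and the defining angle inequality of super-regularity; the argument then closes with an elementary quadratic estimate. Fix $\varepsilon > 0$, and assume without loss of generality that $\varepsilon < 1$ (if $\varepsilon \geq 1$, prove the statement with $\varepsilon$ replaced by $\tfrac12$, which only strengthens it). Apply the definition of super-regularity at $\hat{a}$ with this $\varepsilon$ to obtain $\delta > 0$ such that $A \cap \ballrx{\delta}{\hat{a}}$ is closed in $\ballrx{\delta}{\hat{a}}$ and $\dotp{v}{a' - a} \leq \varepsilon \norm{v} \cdot \norm{a' - a}$ for all $a, a' \in A \cap \ballrx{\delta}{\hat{a}}$ and all $v \in \NormalLim{A}{a}$. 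This $\delta$ is the one claimed in the lemma.

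Let $x \in \ballrx{\delta/2}{\hat{a}}$. By the second part of Lemma~\ref{lemma:locally_closed_local_proj}, $\Proj{A}{x}$ is non-empty and every $\pi \in \Proj{A}{x}$ satisfies $\norm{\pi - \hat{a}} \leq 2\norm{x - \hat{a}} < \delta$, so $\pi \in A \cap \ballrx{\delta}{\hat{a}}$. Fix such a $\pi$ and fix $a \in A \cap \ballrx{\delta}{\hat{a}}$; we may assume $a \neq \pi$, since otherwise the claimed inequality is trivial. Because $\pi$ attains the distance from $x$ to $A$, we have $x - \pi \in \NormalProx{A}{\pi} \subseteq \NormalLim{A}{\pi}$, and the super-regularity inequality applied with base point $\pi$, the other point $a$, and normal $v = x - \pi$ gives $\dotp{x - \pi}{a - \pi} \leq \varepsilon \norm{x - \pi} \cdot \norm{a - \pi}$.

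It remains to turn this into the stated bound. Expanding $\norm{x - a}^2 = \norm{(x - \pi) + (\pi - a)}^2$ and inserting the inequality above yields
\[
    \norm{\pi - a}^2 \leq \norm{x - a}^2 - \norm{x - \pi}^2 + 2\varepsilon \norm{x - \pi} \cdot \norm{\pi - a}.
\]
Write $p = \norm{\pi - a}$, $q = \norm{x - \pi}$, $r = \norm{x - a}$; since $a \in A$ and $\pi \in \Proj{A}{x}$ we have $q = \dist{x}{A} \leq r$. Reading $p^2 - 2\varepsilon q p - (r^2 - q^2) \leq 0$ as a quadratic inequality in $p \geq 0$ (an upward parabola whose value at $0$ is $-(r^2 - q^2) \leq 0$, so $p$ is at most its larger root) gives
\[
    p \leq \varepsilon q + \sqrt{\varepsilon^2 q^2 + r^2 - q^2} = \varepsilon q + \sqrt{r^2 - (1 - \varepsilon^2) q^2} \leq \varepsilon r + r = (1 + \varepsilon) r,
\]
where the last step uses $0 \leq q \leq r$ and $\varepsilon < 1$. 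This is exactly $\norm{\pi - a} \leq (1 + \varepsilon) \norm{x - a}$.

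The only genuinely delicate point is the final step: one must check that the radicand $r^2 - (1-\varepsilon^2)q^2$ is non-negative, which is where the optimality of $\pi$ enters through $q \leq r$, and that the larger root of the quadratic really dominates $p$; the same inequality $q \leq r$ together with $\varepsilon < 1$ then collapses the square root to at most $r$. Everything else is a direct invocation of the preliminary lemmas.
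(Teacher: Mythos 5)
Your proof is correct and follows essentially the same route as the paper: invoke Lemma~\ref{lemma:locally_closed_local_proj} for existence and localisation of $\pi$, use $x-\pi\in\NormalProx{A}{\pi}\subseteq\NormalLim{A}{\pi}$ in the super-regularity inequality, and exploit $\norm{x-\pi}=\dist{x}{A}\leq\norm{x-a}$. The only (cosmetic) difference is the closing algebra: the paper writes $\norm{\pi-a}^2=\dotp{\pi-x}{\pi-a}+\dotp{x-a}{\pi-a}$ and divides by $\norm{\pi-a}$, which avoids your quadratic-root argument and the harmless reduction to $\varepsilon<1$.
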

\begin{proof}
See \cite[Thm.~2.14]{hesse2013nonconvex}.
\end{proof}

\subsection{Transversal intersection of sets}
\label{subsec:transversal}
Whether the AP can find a common point of two sets, depends on how the sets are \textit{oriented} relative to each other. Consider an example in Fig.~\ref{fig:non_transversal}, where $A$ is a piecewise linear curve and $B$ is the horizontal axis; they intersect at the origin. No matter the starting point and the order of the projections, the AP either get stuck in a loop or converge in one step. The latter takes place for the starting points on the vertical axis: all of them when we begin with $B$, and those in the lower halfspace when we begin with $A$. Therefore, the AP fail to produce the common point of $A$ and $B$ for \emph{almost every} starting point.

\begin{figure}[ht]
\centering
    \includegraphics[width=0.9\linewidth]{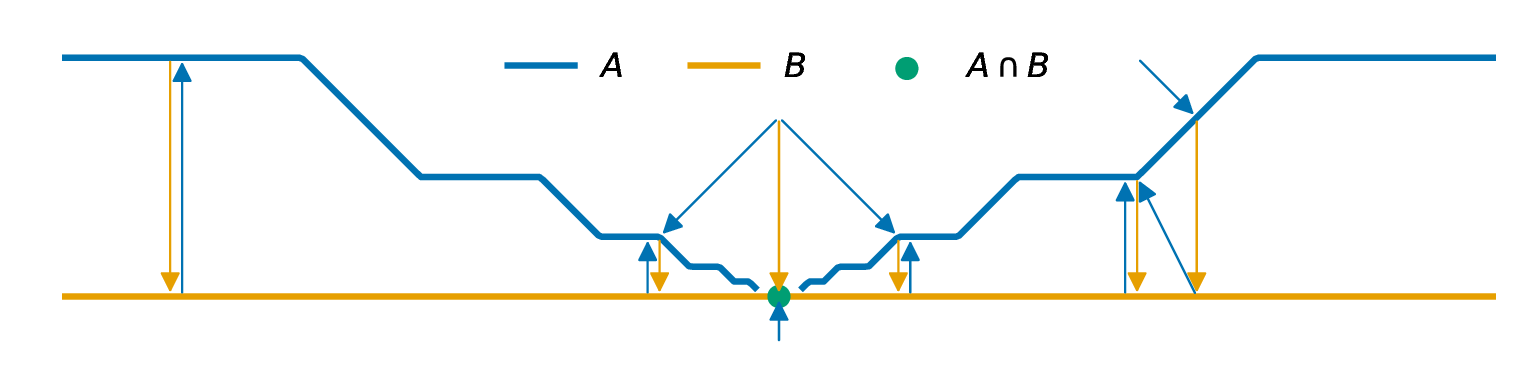}
\caption{For certain pairs of sets $A$ and $B$, the alternating projections do not converge to $A \cap B$ for almost every starting point.}
\label{fig:non_transversal}
\end{figure}

The following definition aims to rule out such pathological cases. Consider a pair of non-empty sets $A, B \subseteq H$ with a common point $\bar{x} \in A \cap B$, where both sets are locally closed. We say that $A$ and $B$ \emph{intersect transversally} at $\bar{x}$ if 
\begin{equation*}
    -\NormalLim{A}{\bar{x}} \cap \NormalLim{B}{\bar{x}} = \{ 0 \}.
\end{equation*}
While we follow the terminology of \cite{drusvyatskiy2015transversality}, this property has other names in the literature, such as \textit{strong regularity} \cite{hesse2013nonconvex} and \textit{linearly regular intersection} \cite{lewis2009local}. See also \cite{kruger2018set}.

The `quality' of the intersection of two sets at a point $\bar{x}$ can also be quantified:
\begin{equation*}
    \cos(A,B,\bar{x}) = \max \set[\big]{\dotp{u}{v}}{u \in -\NormalLim{A}{\bar{x}} \cap \overline{\ball},~v \in \NormalLim{B}{\bar{x}} \cap \overline{\ball}}.
\end{equation*}
This value lies in the interval $[0,1]$ and generalises the cosine of the angle between two linear subspaces. The following lemmas reveal the relationship between $\cos(A,B,\bar{x})$ and transversality; find their proofs in \cite{lewis2009local, phan2016linear}.

\begin{lemma}
Let non-empty $A, B \subseteq H$ intersect at $\bar{x} \in A \cap B$, where both sets are locally closed. They intersect transversally at $\bar{x}$ if and only if $\cos(A,B,\bar{x}) < 1$.
\end{lemma}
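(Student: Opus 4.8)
The plan is to prove the equivalence by contraposition in one direction and a compactness argument in the other. First I would unpack the definition: transversality at $\overline{x}$ means $-\NormalLim{A}{\overline{x}} \cap \NormalLim{B}{\overline{x}} = \{0\}$, and $\cos(A,B,\overline{x})$ is the supremum (attained, as I will argue) of $\dotp{u}{v}$ over $u \in -\NormalLim{A}{\overline{x}} \cap \overline{\ball}$ and $v \in \NormalLim{B}{\overline{x}} \cap \overline{\ball}$. The first observation is that $\cos(A,B,\overline{x}) \in [0,1]$ always: it is nonnegative because $u = v = 0$ is admissible, and it is at most $1$ by Cauchy--Schwarz since both vectors lie in the closed unit ball. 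So the content is exactly whether the value $1$ is attained.

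For the direction ``$\cos(A,B,\overline{x}) = 1 \Rightarrow$ not transversal'': since the limiting normal cones are closed and the closed unit ball is compact, the sets $-\NormalLim{A}{\overline{x}} \cap \overline{\ball}$ and $\NormalLim{B}{\overline{x}} \cap \overline{\ball}$ are compact, so the bilinear functional $(u,v) \mapsto \dotp{u}{v}$ attains its maximum; hence there exist $u, v$ in these sets with $\dotp{u}{v} = 1$. By the equality case of Cauchy--Schwarz this forces $\norm{u} = \norm{v} = 1$ and $u = v$. Then $u = v \in -\NormalLim{A}{\overline{x}} \cap \NormalLim{B}{\overline{x}}$ is a nonzero common vector, so the intersection is not transversal.

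For the converse ``not transversal $\Rightarrow \cos(A,B,\overline{x}) = 1$'': if there is a nonzero $w \in -\NormalLim{A}{\overline{x}} \cap \NormalLim{B}{\overline{x}}$, then since both are cones we may rescale and take $\norm{w} = 1$; setting $u = v = w$ gives $\dotp{u}{v} = 1$, hence $\cos(A,B,\overline{x}) \geq 1$, and combined with the universal bound $\cos(A,B,\overline{x}) \leq 1$ we get equality. Contrapositively, $\cos(A,B,\overline{x}) < 1$ implies transversality. This finishes both implications.

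The argument is essentially routine; the only point requiring a little care is asserting that the maximum in the definition of $\cos(A,B,\overline{x})$ is actually attained, which I would justify by compactness of $\NormalLim{A}{\overline{x}} \cap \overline{\ball}$ and $\NormalLim{B}{\overline{x}} \cap \overline{\ball}$ (closedness of the limiting normal cones was noted in Section~\ref{subsec:cones}, and $\overline{\ball}$ is compact in the finite-dimensional Euclidean space $H$) together with continuity of the inner product. If one preferred to avoid attainment, the same conclusion follows by taking maximising sequences and passing to convergent subsequences, so there is no real obstacle here.
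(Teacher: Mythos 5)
Your proof is correct and is essentially an expanded version of the paper's own one-line argument, which simply observes that $\cos(A,B,\overline{x}) = 1$ holds exactly when there is a non-zero vector in $-\NormalLim{A}{\overline{x}} \cap \NormalLim{B}{\overline{x}}$. Your careful justification of attainment via compactness and the equality case of Cauchy--Schwarz fills in the details the paper leaves implicit, but the route is the same.
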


\begin{lemma}
\label{lemma:locally_transversal}
Let non-empty $A, B \subseteq H$ intersect at $\bar{x} \in A \cap B$, where both sets are locally closed. Assume that $\cos(A,B,\bar{x}) < c$ for some $c \in (0, 1]$. Then there is a neighbourhood $\mathcal{V} \subseteq H$ of $\bar{x}$ such that for every $a \in A \cap \mathcal{V}$ and $b \in B \cap \mathcal{V}$ it holds that
\begin{equation*}
    \dotp{u}{v} < c \quad \text{for all} \quad u \in -\NormalLim{A}{a} \cap \overline{\ball} \quad \text{and} \quad v \in \NormalLim{B}{b} \cap \overline{\ball}.
\end{equation*}
\end{lemma}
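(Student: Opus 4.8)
The plan is to argue by contradiction, converting a failure of the conclusion into sequences in $A$, in $B$, and of unit normal vectors, and then to extract a limit that violates the definition of $\cos(A,B,\overline{x})$. The only non-elementary ingredient is the outer semicontinuity of the limiting normal cone recorded in \eqref{eq:limiting_normal_cone_semicontinuity}; everything else is compactness and bookkeeping.

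First I would perform a harmless reduction to make sure all limiting normal cones appearing in the argument are defined. Since $A$ and $B$ are locally closed at $\overline{x}$, fix $\rho > 0$ with $A \cap \ballrx{\rho}{\overline{x}}$ and $B \cap \ballrx{\rho}{\overline{x}}$ closed in $\ballrx{\rho}{\overline{x}}$; then $A$ is locally closed at every point of $A \cap \ballrx{\rho}{\overline{x}}$, and similarly for $B$, so any candidate neighbourhood $\mathcal{V}$ may be assumed contained in $\ballrx{\rho}{\overline{x}}$. Now suppose the conclusion fails for every such $\mathcal{V}$. Testing against the shrinking balls $\ballrx{1/n}{\overline{x}}$ yields points $a_n \in A$ and $b_n \in B$ with $a_n \to \overline{x}$, $b_n \to \overline{x}$, together with $u_n \in -\NormalLim{A}{a_n} \cap \overline{\ball}$ and $v_n \in \NormalLim{B}{b_n} \cap \overline{\ball}$ satisfying $\dotp{u_n}{v_n} \geq c$. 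Because $\overline{\ball}$ is compact, pass to a subsequence along which $u_n \to u$ and $v_n \to v$, with $u, v \in \overline{\ball}$.

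The crux is the passage to the limit in the normal inclusions, and this is where I expect the only real subtlety to lie. Applying \eqref{eq:limiting_normal_cone_semicontinuity} to the sequences $a_n \to \overline{x}$ with $-u_n \in \NormalLim{A}{a_n}$ and $-u_n \to -u$ gives $-u \in \NormalLim{A}{\overline{x}}$, that is, $u \in -\NormalLim{A}{\overline{x}}$; identically, $v \in \NormalLim{B}{\overline{x}}$. Passing to the limit in $\dotp{u_n}{v_n} \geq c$ gives $\dotp{u}{v} \geq c$, and since $u \in -\NormalLim{A}{\overline{x}} \cap \overline{\ball}$ and $v \in \NormalLim{B}{\overline{x}} \cap \overline{\ball}$, the definition of $\cos(A,B,\overline{x})$ forces $\cos(A,B,\overline{x}) \geq c$, contradicting the hypothesis. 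The point requiring care is precisely this limiting step: one must use the \emph{strengthened} description \eqref{eq:limiting_normal_cone_semicontinuity} of the limiting normal cone — in which the approximating normals are themselves limiting normals rather than merely proximal — and observe that negation is a homeomorphism, so it commutes with the limiting-normal construction, which is what lets us handle $-\NormalLim{A}{\cdot}$ on the same footing as $\NormalLim{B}{\cdot}$.
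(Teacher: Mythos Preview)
Your proposal is correct and follows essentially the same contradiction-and-compactness argument as the paper's own proof, invoking the outer semicontinuity \eqref{eq:limiting_normal_cone_semicontinuity} of the limiting normal cone at the key step. You are in fact slightly more careful than the paper in spelling out the local-closedness reduction and the harmless interaction of negation with the limiting normal cone.
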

\section{Contributions: Detailed version}
\label{sec:contributions}

Recall the definition \eqref{eq:qmp} of the $\sigma$-quasioptimal metric projection:
\begin{equation*}
    \Proj{A}{x; \sigma} = \set{a \in A}{\norm{x-a} \leq \sigma \cdot \dist{x}{A}}.
\end{equation*}
It serves to describe a wide range of inexact projections that appear in applications. From the theoretical perspective, it is important to understand when the quasioptimal metric projections are non-empty. Clearly, $\Proj{A}{x; \sigma}$ contain the optimal metric projection $\Proj{A}{x}$, so we can refer to Lemma~\ref{lemma:locally_closed_local_proj} to establish the local existence of quasioptimal metric projections. In Lemma~\ref{lemma:qoptimal_projection_exists}, we prove a stronger \emph{global} result: $\Proj{A}{x; \sigma}$ are non-empty almost everywhere in $H$ for $\sigma > 1$.

Our convergence theory for the QAP \eqref{eq:qap} is inspired by \cite{lewis2009local}. Namely, we prove that the iterations converge locally and linearly to some $\hat{x} \in A \cap B$ when the quasioptimality constants are not too large and there is a common point $\bar{x} \in A \cap B$ such that (i)~at least one of the sets is super-regular at $\bar{x}$ and (ii)~the sets intersect transversally at $\bar{x}$.

When one of the projections is optimal, $\sigma_B = 1$, Theorem~\ref{theorem:1qoptimal_convergence} and Corollary~\ref{corollary:1qoptimal_convergence_2sr} provide an upper bound for the values of $\sigma_A$ that guarantee convergence for every instance of the sequence of AP. This upper bound is determined by $\cos(A, B, \bar{x})^{-1}$, hence the projection onto $A$ can be \emph{very} inexact if the sets are almost orthogonal. This is promising for applications to low-rank approximation of order-$d$ tensors as the corresponding quasioptimality~constants~$\sigma_A$~scale~like~$\sqrt{d}$.

If $\sigma_A > 1$ and $\sigma_B > 1$, we prove convergence in Theorem~\ref{theorem:2qoptimal_convergence} provided that both $A$ and $B$ are super-regular, $\sigma_A$ and $\sigma_B$ are individually bounded by $\cos(A, B, \bar{x})^{-1}$ and
\begin{equation*}
    \Big( c + \sqrt{1 - c^2} \sqrt{\sigma_A^2 - 1} \Big) \cdot \Big( c + \sqrt{1 - c^2} \sqrt{\sigma_B^2 - 1} \Big) < 1, \quad c = \cos(A, B, \bar{x}).
\end{equation*}

Our proof of Theorem~\ref{theorem:2qoptimal_convergence} relies on what appears to be a hitherto unknown property of super-regular sets; we call it the \emph{Pythagorean property}. The Pythagorean theorem together with the trigonometric identities completely describes right-angled triangles. If $x$ is a vertex opposite the right angle, its adjacent leg is the unique element of $x - \Proj{A}{x}$ and the hypotenuse is an element of $x - \Proj{A}{x; \sigma}$ for some $\sigma > 1$, where $A$ is the line through the second leg. In Section~\ref{subsec:basp} and Appendix~\ref{appendix:basp}, we show that the following principles of plane geometry generalise to super-regular sets as inequalities:
\begin{itemize}
    \item the cosine of the angle between a `leg' and the `hypotenuse' is determined by the ratio of their lengths (Lemma~\ref{lemma:basp_superregular} and Corollary~\ref{corollary:basp_superregular_qomp}),
    \item the squared length of a `leg' is determined by the difference of the squared lengths of the `hypotenuse'  and the second `leg' (Corollary~\ref{corollary:qmp_close_to_omp}).
\end{itemize}

With $\sigma_A = 1$ in Theorem~\ref{theorem:1qoptimal_convergence} and Corollary~\ref{corollary:1qoptimal_convergence_2sr}, we return to the realm of the exact AP; they refine \cite[Thm.~5.2]{lewis2009local}, where the limit $\hat{x} \in A \cap B$ is shown to satisfy
\begin{equation*}
    \norm{\hat{x} - a_0} \leq \frac{1 + \cos(A, B, \bar{x})}{1 - \cos(A, B, \bar{x})} \cdot \norm{a_0 - \bar{x}}.
\end{equation*}
We tighten this estimate by replacing $\norm{a_0 - \overline{x}}$ with $\norm{a_0 - b_1}$. As a consequence, 
\begin{equation*}
    \norm{\hat{x} - a_0} \leq \frac{1 + \cos(A, B, \bar{x})}{1 - \cos(A, B, \bar{x})} \cdot \dist{a_0}{B} \leq \frac{1 + \cos(A, B, \bar{x})}{1 - \cos(A, B, \bar{x})} \cdot \dist{a_0}{A \cap B},
\end{equation*}
so the limit of the AP started with $a_0 \in A$ lies in the quasioptimal metric projections of $a_0$ onto $B$ and $A \cap B$. This observation holds for the QAP as well and allows us to estimate the distance from $\hat{x}$ to the optimal metric projections of $a_0$ onto $B$ and $A \cap B$ in Theorem~\ref{theorem:ap_as_qmp} and Corollary~\ref{corollary:ap_as_qmp}.

As we have made clear, our interest in the theory behind the QAP stems from their successful applications to constrained low-rank approximation of matrices and tensors. Naturally, we showcase some of them and focus on two problems in particular.\footnote{We leave their theoretical analysis within the framework of the AP for the future.}

The first problem is low-rank approximation of nonnegative matrices and tensors. It was approached numerically with the QAP in \cite{matveev2023sketching, sultonov2023low}, showing linear convergence. In Subsection~\ref{subsec:nonnegative}, we develop fast AP algorithms based on low-rank \emph{cross approximation} (see Subsection~\ref{subsec:qoptimal_lowrank} and the supplementary material) and acceleration techniques:
\begin{itemize}
    \item with cross approximation, the metric projection onto the nonnegative orthant can be computed without multiplying the low-rank factors;
    \item reflections and shifts can improve the rate of convergence for low-rank nonnegative approximation at the expense of a mild increase of the approximation error;
    \item a single step of the cross-approximation-based AP leads to noticeable regularisation in low-rank recovery problems such as matrix completion without changing the asymptotic computational complexity.
\end{itemize}

The second problem is low-rank approximation in the \emph{maximum} norm 
\begin{equation}
\label{eq:max_norm}
    \Norm{\Tensor{X}}{\max} = \max_{i_1, \ldots, i_d} |\Tensor{X}(i_1, \ldots, i_d)|.
\end{equation}
The truncated SVD yields the best low-rank approximation to a matrix in any unitarily invariant norm (e.g., the spectral norm and the Frobenius norm), and its error depends on the decay of the singular values. The maximum norm is not unitarily invariant, hence neither the smallest approximation error nor the low-rank matrix that achieves it can be deduced from the SVD.

One of the results of \cite{srebro2005rank} is a low-rank approximation guarantee in the maximum norm that does not depend on the singular values' decay: any $n \times n$ matrix $\Matrix{X}$ can be approximated as $\Norm{\Matrix{X - Y}}{max} 
\leq \varepsilon \Norm{\Matrix{X}}{2}$ with $\rank{\Matrix{Y}} \leq \lceil 9 \log(3n^2) / \varepsilon^2\rceil$. In Subsection~\ref{subsec:maximum}, we propose to alternate the projections between the set of fixed-rank matrices and the maximum-norm ball to compute such approximations; we use this method to (i)~approximate identity matrices and random orthogonal matrices, and (ii)~reconstruct low-rank matrices and tensors with quantised entries.
\section{Related work}
\label{sec:related}

Inexact AP have been studied before. In \cite[Thm.~6.1]{lewis2009local}, the metric projection $b_{n+1} \in \Proj{B}{a_n}$ was replaced with a point $b_{n+1} \in B$ that satisfies (i)~a monotonicity condition $\norm{b_{n+1} - a_n} \leq \norm{a_n - b_{n-1}}$, and (ii)~a \emph{normal-cone condition}, i.e., the step $a_n - b_{n+1}$ is almost a limiting normal to $B$ at~$b_{n + 1}$:
\begin{equation}
\label{eq:inexact_angle}
    \dist[\Bigg]{\frac{a_n - b_{n+1}}{\norm{a_n - b_{n+1}}}}{\NormalLim{B}{b_{n+1}}} \leq \gamma, \quad \gamma \in  [0, 1).
\end{equation}
Note that there can be points $b_{n+1}$ with $a_n - b_{n+1} \in \NormalLim{B}{b_{n+1}}$ but $b_{n+1} \not\in \Proj{B}{a_n}$. Consider a closed annulus $B$ in the plane. Every $x \not\in B$ has at least two points $b \in B$ that satisfy $x - b \in \NormalLim{B}{b}$: on the inner and outer circles, respectively. This is when the monotonicity condition becomes restrictive and narrows down the candidates. Both metric projections were made inexact in this sense in \cite[Thm.~34]{kruger2015regularity}.

When $B$ is not super-regular, as is the case in \cite[Thm.~6.1]{lewis2009local}, this particular form of inexactness makes it possible to overcome the irregular behaviour of the limiting normal cone to $B$ in the proof of convergence. We, on the contrary, work only with quasioptimal metric projections onto super-regular sets in Theorem~\ref{theorem:1qoptimal_convergence} and Theorem~\ref{theorem:2qoptimal_convergence}. This allows us to invoke the Pythagorean property (Corollary~\ref{corollary:basp_superregular_qomp}): an optimal metric projection $\pi \in \Proj{B}{a_n}$ and a quasioptimal metric projection $\pi_{\sigma_B} \in \Proj{B}{a_n; \sigma_B}$ satisfy
\begin{equation*}
    \dotp[\bigg]{\frac{a_n - \pi}{\norm{a_n - \pi}}}{\frac{a_n - \pi_{\sigma_{B}}}{\norm{a_n - \pi_{\sigma_B}}}} \gtrsim \frac{1}{\sigma_B}.
\end{equation*}
Thus, instead of assuming that $a_n - \pi_{\sigma_B}$ is close to $\NormalLim{B}{\pi_{\sigma_B}}$, we base our reasoning on the observation that $a_n - \pi_{\sigma_B}$ is well-aligned with $\NormalProx{B}{\pi}$. 

The algorithms that compute inexact projections as in \cite[Thm.~6.1]{lewis2009local} and \cite[Thm.~34]{kruger2015regularity}, or at least the analysis of their performance, should be aware of the first-order information about the set. The use of quasioptimal metric projections allows us to avoid involving such information in the statements of our theorems: an algorithm that computes quasioptimal metric projections only needs to meet a zeroth-order condition as long as the corresponding set is super-regular. In addition, an explicit monotonicity-like condition is not required: it is satisfied automatically when the two quasioptimality constants are jointly not too large.

We suspect, but cannot prove now, that the normal-cone condition \eqref{eq:inexact_angle} holds for quasioptimal metric projections onto sufficiently regular sets. For example, $\sigma$-quasioptimal metric projections onto a line satisfy it with $\gamma = \sqrt{1 - 1 / \sigma^2}$. Similar behaviour can be expected from smooth manifolds, but we do not know whether the same can be said about general super-regular sets.

A combination of quasioptimality and the normal-cone condition \eqref{eq:inexact_angle} was considered in \cite[Thm.~31]{kruger2015regularity}, where neither of the two sets is required to be super-regular and a weaker assumption of intrinsic transversality \cite{drusvyatskiy2014alternating} is used. As in \cite[Thm.~6.1]{lewis2009local}, the absence of super-regularity necessitates the normal-cone condition for the proof.

Another point of view is to treat an inexact projection as a sum of a metric projection and an error. Two examples were studied in \cite[Thms.~1, 3]{drusvyatskiy2019local}. In the first one, the metric projection onto $A$ was approximated with a map $f: H \to H$ such that
\begin{equation*}
    \lim_{x \to A} \frac{\dist{f(x)}{\Proj{A}{x}}}{\dist{x}{A}} = 0.
\end{equation*}
Interestingly, $f(x)$ need not lie in $A$, but if it does then $f(x)$ is a quasioptimal metric projection for every $x$ that is sufficiently close to $A$. The inverse is false: in general, quasioptimal metric projections do not belong to this class of inexact projections. In the second example, $f$ is allowed to depend on the previous iteration and satisfies
\begin{equation*}
    \dist[\Big]{f(b_{n+1}, a_{n})}{\Proj{A}{b_{n+1}}} \leq \varepsilon \cdot \norm{b_{n+1} - a_n}, \quad b_{n+1} \in \Proj{B}{a_n}.
\end{equation*}
A necessary condition for convergence is $\varepsilon < 1$. At the same time, Corollary~\ref{corollary:qmp_close_to_omp} says that the following holds for the quasioptimal metric projection $\pi_{\sigma_A} \in \Proj{A}{b_{n+1}; \sigma_A}$:
\begin{equation*}
    \dist{\pi_{\sigma_A}}{\Proj{A}{b_{n+1}}} \lesssim \sqrt{\sigma_A^2 - 1} \cdot \norm{b_{n+1} - a_n}.
\end{equation*}
So the result of \cite{drusvyatskiy2019local} is only applicable for $\sigma_A \lesssim \sqrt{2}$, while Theorem~\ref{theorem:1qoptimal_convergence} can guarantee convergence with arbitrarily large $\sigma_A$ as long as $A$ and $B$ are `sufficiently orthogonal'.
\section{Quasioptimal metric projections}
\label{sec:qoptimal}

Several simple, yet useful, properties follow directly from the definition \eqref{eq:qmp}. The quasioptimal metric projections are nested,
\begin{equation*}
    \Proj{A}{x; \sigma} \subseteq \Proj{A}{x; \sigma'} \quad \text{when} \quad \sigma \leq \sigma',
\end{equation*}
cannot be too spread out when $x$ is close to $A$,
\begin{equation*}
    \norm{\pi - \pi'} \leq (\sigma + \sigma') \cdot \dist{x}{A} \quad \text{for every} \quad \pi \in \Proj{A}{x; \sigma} \quad \text{and} \quad \pi' \in \Proj{A}{x; \sigma'},
\end{equation*}
and remain near points of reference,
\begin{equation*}
    \norm{\pi - \hat{a}} \leq (\sigma + 1) \cdot \norm{x - \hat{a}} \quad \text{for every} \quad \pi \in \Proj{A}{x; \sigma} \quad \text{and} \quad \hat{a} \in A.
\end{equation*}

\subsection{Existence}
When is $\Proj{A}{x; \sigma}$ non-empty? Lemma~\ref{lemma:locally_closed_local_proj} states that every $x \in H$ has a non-empty optimal metric projection $\Proj{A}{x}$ if and only if $A$ is closed. When $A$ is open, $\Proj{A}{x}$ is non-empty only for $x \in A$, in which case the projection is $\Proj{A}{x} = \{ x \}$. In the more general case where $A$ contains only a part of its boundary, the set of points without any metric projections onto $A$ has positive measure (e.g., an open interval in Fig.~\ref{fig:empty_optimal_proj}). The $\sigma$-quasioptimal metric projections are \textit{almost} immune to this problem.

\begin{figure}[!h]
\centering
    \includegraphics[width=0.9\linewidth]{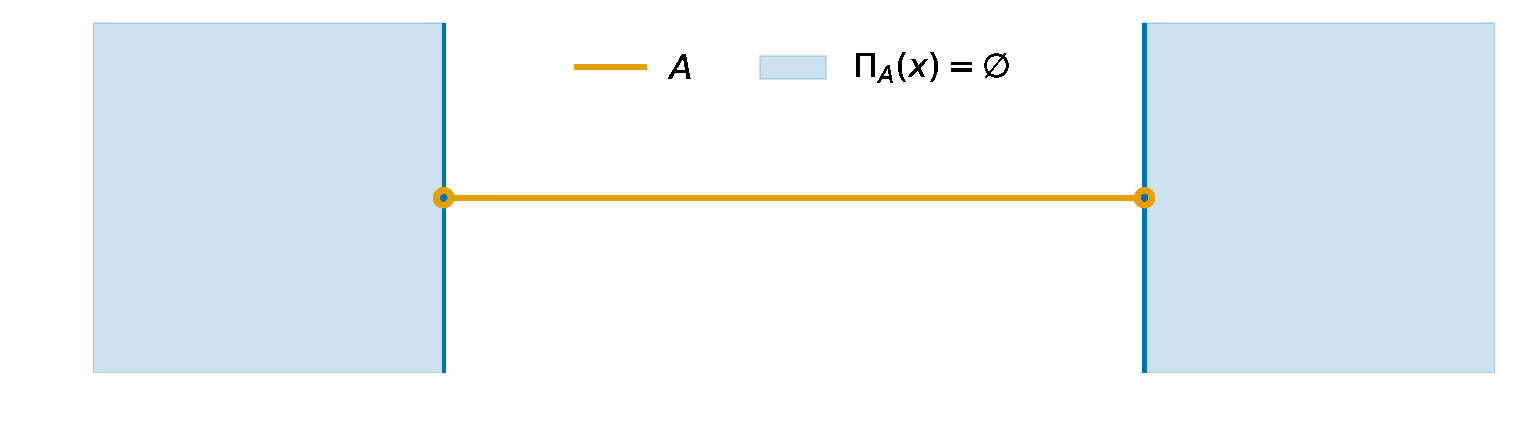}
\caption{When $A$ is not closed, the set $\set{x \in H}{\Proj{A}{x} = \emptyset}$ has positive measure. For $\sigma$-quasioptimal metric projections with $\sigma > 1$, this is a subset of $\boundary A$.}
\label{fig:empty_optimal_proj}
\end{figure}

\begin{lemma}
\label{lemma:qoptimal_projection_exists}
Let $A \subseteq H$ be non-empty and $\sigma > 1$. Then for every $x \in H$ the $\sigma$-quasioptimal metric projection $\Proj{A}{x; \sigma}$ satisfies
\begin{enumerate}
    \item $\Proj{A}{x; \sigma} = \{ x \}$ when $x \in A$;
    \item $\Proj{A}{x; \sigma} \neq \emptyset$ when $x \in A \cup (H \setminus \overline{A})$;
    \item $\Proj{A}{x; \sigma} = \emptyset$ when $x \in \overline{A} \setminus A$.
\end{enumerate}
\end{lemma}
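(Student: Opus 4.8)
The plan is to handle the three cases separately, with cases 1 and 3 being nearly immediate and case 2 carrying the real content. For \textbf{case 1}, if $x \in A$ then $\dist{x}{A} = 0$, so the defining inequality $\norm{x - a} \leq \sigma \cdot \dist{x}{A} = 0$ forces $a = x$; hence $\Proj{A}{x; \sigma} = \{x\}$ regardless of $\sigma$. For \textbf{case 3}, suppose $x \in \overline{A} \setminus A$. Then $\dist{x}{A} = 0$ (since $x$ is a limit of points of $A$), so again the condition $\norm{x - a} \leq \sigma \cdot \dist{x}{A} = 0$ would require $a = x$; but $x \notin A$, so no such $a \in A$ exists and $\Proj{A}{x; \sigma} = \emptyset$.

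The substance is \textbf{case 2}, and since $x \in A$ is already covered, I only need $x \in H \setminus \overline{A}$. Here $d := \dist{x}{A} > 0$ because $x$ is not in the closure of $A$. The idea is to exploit the slack afforded by $\sigma > 1$: I want to find $a \in A$ with $\norm{x - a} \leq \sigma d$, and unlike the optimal case I do \emph{not} need a minimiser, just an element of $A$ lying in the closed ball $\overline{\ballrx{\sigma d}{x}}$. By definition of the infimum there is a point $a \in A$ with $\norm{x - a} < \sigma d$ (indeed, since $\sigma d > d = \inf_{\hat a \in A}\norm{x - \hat a}$, the value $\sigma d$ is not a lower bound, so some $a \in A$ falls strictly below it). That $a$ satisfies the quasioptimality inequality, so $\Proj{A}{x; \sigma} \neq \emptyset$. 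The same argument trivially re-covers $x \in A$ (take $a = x$), so case 2 is complete.

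I expect no genuine obstacle here; the only thing to be careful about is the logical structure of the infimum argument in case 2 — precisely, that strict inequality $\sigma d > d$ (which uses $\sigma > 1$ \emph{and} $d > 0$, the latter guaranteed by $x \notin \overline A$) is exactly what lets us extract an element of $A$ below the threshold without invoking any closedness or Bolzano--Weierstrass compactness, in contrast to Lemma~\ref{lemma:locally_closed_local_proj}. It may also be worth remarking, to connect with Figure~\ref{fig:empty_optimal_proj}, that cases 1--3 partition $H$ up to the set $\boundary A \cap A$ handled by case 1 and that the only points of $H$ without a $\sigma$-quasioptimal projection are those in $\overline A \setminus A \subseteq \boundary A$, a set of measure zero when $A$ is, say, the closure of an open set.
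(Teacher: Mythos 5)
Your proof is correct and follows essentially the same route as the paper's: cases 1 and 3 are handled identically, and for case 2 you exploit the slack $(\sigma-1)\dist{x}{A}>0$ exactly as the paper does, merely phrasing the extraction of a suitable $a\in A$ via the definition of the infimum rather than via the tail of a minimising sequence. No changes needed.
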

\begin{proof}
$\bm{1.}$ For $x \in A$, we have $\dist{x}{A} = 0$, and $\norm{x - a} = 0$ leads to $a = x$.\\
$\bm{2.}$ When $x \in H \setminus \overline{A}$, $\dist{x}{A} > 0$ and there exists a minimising sequence $(a_n) \subset A$ such that $\norm{x - a_n} \to \dist{x}{A}$ from above. That is, for every $\varepsilon > 0$ we can pick $N$ large enough so that $0 \leq \norm{x - a_n} - \dist{x}{A} < \varepsilon$ for all $n \geq N$. If we choose $\varepsilon = (\sigma - 1) \cdot \dist{x}{A} > 0$ then the tail of the sequence $(a_n)$ belongs to $\Proj{A}{x; \sigma}$.\\
$\bm{3.}$ Lastly, $\dist{x}{A} = 0$ for $x \in \overline{A} \setminus A$. For every $a \in \Proj{A}{x; \sigma}$ this implies that $\norm{x - a} = 0$ and $x = a$, which contradicts the assumption that $x \not\in A$.
\end{proof}

\begin{corollary}
\label{corollary:qoptimal_projection_locally_closed}
Let $A \subseteq H$ be non-empty and $\sigma > 1$. 
\begin{enumerate}
    \item Let $\mathcal{V} \subseteq H$ be a neighbourhood of $\hat{a} \in A$. The $\sigma$-quasioptimal metric projection $\Proj{A}{x; \sigma}$ is non-empty for each $x \in \mathcal{V}$ if and only if $A \cap \mathcal{V}$ is closed in $\mathcal{V}$.
    \item The $\sigma$-quasioptimal metric projection $\Proj{A}{x; \sigma}$ is non-empty for every $x \in H$ if and only if $A$ is closed.
\end{enumerate}
\end{corollary}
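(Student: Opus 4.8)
The plan is to read both equivalences off the trichotomy already established in Lemma~\ref{lemma:qoptimal_projection_exists}: for $\sigma > 1$ the set $\Proj{A}{x; \sigma}$ is non-empty exactly when $x \in A \cup (H \setminus \overline{A})$ and empty exactly when $x \in \overline{A} \setminus A$. So the corollary reduces to a purely topological statement, and the only work is to relate ``$\mathcal{V}$ contains no point of $\overline{A} \setminus A$'' to ``$A \cap \mathcal{V}$ is closed in $\mathcal{V}$''.

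For item~1, I would argue both directions. \textbf{($\Leftarrow$)} Assume $A \cap \mathcal{V}$ is closed in $\mathcal{V}$ and fix $x \in \mathcal{V}$. If $x \in A$, then $\Proj{A}{x; \sigma} = \{x\}$ by part~1 of Lemma~\ref{lemma:qoptimal_projection_exists}. If $x \notin A$, I claim $x \notin \overline{A}$: if $x$ were the limit of some $(a_n) \subset A$, then, since $\mathcal{V}$ is open and contains $x$, the tail of $(a_n)$ would lie in $A \cap \mathcal{V}$, and closedness of $A \cap \mathcal{V}$ in $\mathcal{V}$ would force $x \in A \cap \mathcal{V} \subseteq A$, a contradiction; hence $x \in H \setminus \overline{A}$ and part~2 gives $\Proj{A}{x; \sigma} \neq \emptyset$. \textbf{($\Rightarrow$)} Conversely, if $\Proj{A}{x; \sigma} \neq \emptyset$ for every $x \in \mathcal{V}$, then the contrapositive of part~3 of Lemma~\ref{lemma:qoptimal_projection_exists} shows that $\mathcal{V}$ contains no point of $\overline{A} \setminus A$, i.e.\ $\overline{A} \cap \mathcal{V} \subseteq A$; since the $\mathcal{V}$-relative closure of $A \cap \mathcal{V}$ is contained in $\overline{A} \cap \mathcal{V}$ and in $\mathcal{V}$, it is contained in $A \cap \mathcal{V}$, so $A \cap \mathcal{V}$ is closed in $\mathcal{V}$.

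Item~2 is then the instance $\mathcal{V} = H$ of item~1 — legitimate because $H$ is open and contains the non-empty set $A$, and closedness in $H$ coincides with closedness — together with $A \cap H = A$. I do not anticipate a real obstacle; the only point needing care, and the place where openness of $\mathcal{V}$ is essential, is distinguishing closedness of $A \cap \mathcal{V}$ \emph{relative to $\mathcal{V}$} from closedness of $A$ in $H$, which the sequence-tail argument above handles. (The corollary parallels Lemma~\ref{lemma:locally_closed_local_proj}, but the sharp trichotomy available for $\sigma > 1$ makes this proof shorter than in the optimal case.)
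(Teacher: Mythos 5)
Your proof is correct and takes essentially the same route as the paper: both directions reduce to the trichotomy of Lemma~\ref{lemma:qoptimal_projection_exists} plus the elementary observation (using openness of $\mathcal{V}$) that $\overline{A}\cap\mathcal{V}\subseteq A$ is equivalent to $A\cap\mathcal{V}$ being closed in $\mathcal{V}$. The paper re-derives the ``$\dist{x}{A}=0$ forces $x\in A$'' step inline rather than citing part~3's contrapositive, but the content is identical, as is the specialisation $\mathcal{V}=H$ for the second claim.
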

\begin{proof}
Let $x \in \mathcal{V}$ be a limit point of $A \cap \mathcal{V}$, so that $\dist{x}{A} = 0$. If $\Proj{A}{x; \sigma} \neq \emptyset$, there exists $a \in A$ such that $\norm{x - a} \leq \sigma \cdot \dist{x}{A} = 0$ and thus $x = a \in A \cap \mathcal{V}$. In the other direction, if $A \cap \mathcal{V}$ is closed in $\mathcal{V}$ then every limit point of $A \cap \mathcal{V}$ in $\mathcal{V}$ belongs to $A$. Therefore, each $x \in \mathcal{V}$ either lies in $A$ or $\dist{x}{A} > 0$, in which case we can repeat the argument from the proof of Lemma~\ref{lemma:qoptimal_projection_exists}. For the second part, choose $\mathcal{V} = H$.
\end{proof}

\begin{remark}
\begin{enumerate}
    \item Lemma~\ref{lemma:qoptimal_projection_exists} and Corollary~\ref{corollary:qoptimal_projection_locally_closed} hold in general Hilbert spaces.
    \item When $\sigma = 1$, the second part of Corollary~\ref{corollary:qoptimal_projection_locally_closed} holds in Euclidean spaces, but not in infinite-dimensional Hilbert spaces: there exist closed sets in Hilbert spaces such that certain points have empty metric projections onto them \cite[Example~3.13]{bauschke2017convex}.
    \item If $A \cap \mathcal{V}$ is closed in $\mathcal{V}$, the first part of Corollary~\ref{corollary:qoptimal_projection_locally_closed} states that the $\sigma$-quasioptimal metric projection exists for every $x \in \mathcal{V}$ when $\sigma > 1$. For optimal metric projections, this can be guaranteed only in a smaller neighbourhood (see Lemma~\ref{lemma:locally_closed_local_proj}).
\end{enumerate}
\end{remark}

\subsection{Pythagorean property of super-regular sets}
\label{subsec:basp}
Until now, our discussion of quasioptimal metric projections was based only on the metric properties of the ambient space. The Euclidean structure of $H$ makes it possible to compare the \emph{directions} of optimal and quasioptimal metric projections.

Consider a hyperplance $A$, a point $x \not\in A$, its orthogonal projection $\pi \in A$, and a quasioptimal metric projection $\pi_\sigma \in \Proj{A}{x; \sigma}$. A basic trigonometric argument gives that $\pi - x$ and $\pi_\sigma - x$ form an acute angle that is upper bounded by $\arccos(\frac{1}{\sigma})$. Thus, the quasioptimal metric projections onto a hyperplance enjoy a geometric property that we call \textit{Pythagorean}: they are aligned with the direction of the optimal metric projection. See Fig.~\ref{fig:angular_spread_line} for a two-dimensional example.

\begin{figure}[!ht]
\centering
    \includegraphics[width=0.9\linewidth]{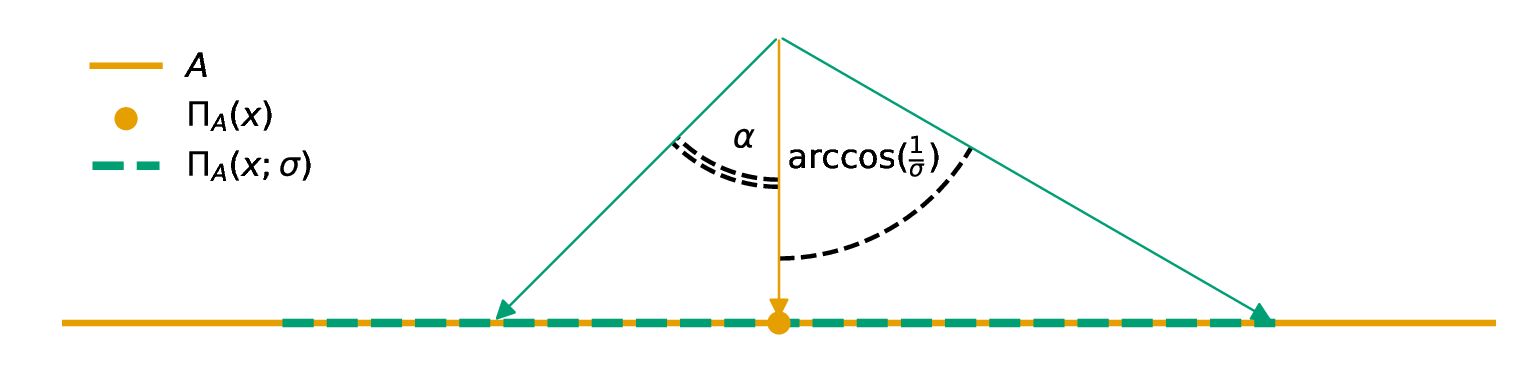}
\caption{Every $\sigma$-quasioptimal projection onto a hyperplane satisfies the Pythagorean property: the angle $\alpha$ it forms with the orthogonal projection is bounded by $\arccos(\frac{1}{\sigma})$.}
\label{fig:angular_spread_line}
\end{figure}

In Lemma~\ref{lemma:basp_superregular} and Corollary~\ref{corollary:basp_superregular_qomp}, we show that the Pythagorean property persists as we substitute a super-regular set for the hyperplane. This intuitive result is crucial for the proof of the convergence Theorem~\ref{theorem:2qoptimal_convergence}. In Appendix~\ref{appendix:basp}, we derive analogues of Lemma~\ref{lemma:basp_superregular} for prox-regular and convex sets.

\begin{lemma}
\label{lemma:basp_superregular}
Let $A \subseteq H$ be non-empty and super-regular at $\hat{a} \in A$. Then for every $\varepsilon > 0$ there exists $\delta > 0$ such that for every $x \in \ballrx{\delta/2}{\hat{a}} \setminus A$ it holds that
\begin{equation*}
        \dotp[\bigg]{\frac{x - \pi}{\norm{x - \pi}}}{\frac{x - a}{\norm{x - a}}} \geq \frac{\norm{x - \pi}}{\norm{x - a}} - \varepsilon (1 + \varepsilon) 
\end{equation*}
for each $\pi \in \Proj{A}{x}$ and $a \in A \cap \ballrx{\delta}{\hat{a}}$.
\end{lemma}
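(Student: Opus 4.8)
The plan is to reduce the claimed inequality to two facts already at our disposal: the defining angle condition \eqref{eq:sr_normal_angle} of super-regularity, and the near-non-expansiveness of optimal metric projections onto super-regular sets (Lemma~\ref{lemma:superregular_proj_nonexpansove}). The geometric idea is simply to split the vector $x - a$ along the optimal direction $x - \pi$ and its complement $\pi - a$, use that $x - \pi$ is a proximal (hence limiting) normal at $\pi$ to control the cross term, and use the near-non-expansiveness to trade $\norm{\pi - a}$ for $\norm{x - a}$.

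First I would fix $\varepsilon > 0$ and choose $\delta > 0$ small enough that (i)~$A \cap \ballrx{\delta}{\hat{a}}$ is closed in $\ballrx{\delta}{\hat{a}}$, (ii)~the super-regularity estimate \eqref{eq:sr_normal_angle} holds on $\ballrx{\delta}{\hat{a}}$ with this $\varepsilon$, and (iii)~the conclusion of Lemma~\ref{lemma:superregular_proj_nonexpansove} holds on $\ballrx{\delta/2}{\hat{a}}$ and $\ballrx{\delta}{\hat{a}}$ with the same $\varepsilon$; concretely, take the minimum of the two radii supplied by the definition of super-regularity at $\hat{a}$ and by Lemma~\ref{lemma:superregular_proj_nonexpansove}, noting that all the relevant inequalities persist on smaller balls. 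For $x \in \ballrx{\delta/2}{\hat{a}} \setminus A$, Lemma~\ref{lemma:locally_closed_local_proj} guarantees $\Proj{A}{x} \neq \emptyset$ with $\norm{\pi - \hat{a}} \leq 2\norm{x - \hat{a}} < \delta$ for every $\pi \in \Proj{A}{x}$, so $\pi \in A \cap \ballrx{\delta}{\hat{a}}$; moreover $x - \pi \in \NormalProx{A}{\pi} \subseteq \NormalLim{A}{\pi}$.

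The core computation is the decomposition $x - a = (x - \pi) + (\pi - a)$, which gives
\begin{equation*}
    \dotp{x - \pi}{x - a} = \norm{x - \pi}^2 + \dotp{x - \pi}{\pi - a}.
\end{equation*}
I would bound the cross term from below by invoking \eqref{eq:sr_normal_angle} with the normal $v = x - \pi \in \NormalLim{A}{\pi}$ and the pair $\pi, a \in A \cap \ballrx{\delta}{\hat{a}}$, which yields $\dotp{x - \pi}{\pi - a} \geq -\varepsilon \norm{x - \pi}\, \norm{\pi - a}$, and then replace $\norm{\pi - a}$ by $(1 + \varepsilon)\norm{x - a}$ via Lemma~\ref{lemma:superregular_proj_nonexpansove}. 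This produces $\dotp{x - \pi}{x - a} \geq \norm{x - \pi}^2 - \varepsilon(1 + \varepsilon)\norm{x - \pi}\,\norm{x - a}$. Since $x \notin A$ forces $x \neq \pi$ and $x \neq a$, both $\norm{x - \pi}$ and $\norm{x - a}$ are positive, so dividing through by their product gives exactly the stated estimate.

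I do not expect a serious obstacle here; the only points requiring care are bookkeeping ones. The first is selecting a single $\delta$ that simultaneously serves \eqref{eq:sr_normal_angle} and Lemma~\ref{lemma:superregular_proj_nonexpansove} — unproblematic since both are of the form ``for every $\varepsilon$ there is $\delta$'' and the inequalities only improve on smaller balls. The second is verifying that the optimal projection $\pi$ itself lands in $\ballrx{\delta}{\hat{a}}$, so that the super-regularity inequality is legitimately applicable to the pair $\pi, a$; this is precisely the quantitative content of Lemma~\ref{lemma:locally_closed_local_proj} part~2.
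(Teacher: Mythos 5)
Your proposal is correct and matches the paper's own argument essentially verbatim: the same choice of $\delta$ from super-regularity, the same use of Lemma~\ref{lemma:locally_closed_local_proj} to place $\pi$ in $A \cap \ballrx{\delta}{\hat{a}}$, the same expansion of $\dotp{x-\pi}{x-a}$ with the cross term $\dotp{x-\pi}{a-\pi}$ controlled by \eqref{eq:sr_normal_angle}, and the same substitution $\norm{\pi - a} \leq (1+\varepsilon)\norm{x-a}$ from Lemma~\ref{lemma:superregular_proj_nonexpansove}. No gaps.
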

\begin{proof}
Fix $\varepsilon > 0$. Since $A$ is super-regular at $\hat{a}$, we can choose $\delta > 0$ such that $A \cap \ballrx{\delta}{\hat{a}}$ is closed in $\ballrx{\delta}{\hat{a}}$ and property \eqref{eq:sr_normal_angle} holds:
\begin{equation*}
    \dotp{v}{a' - a} \leq \varepsilon \norm{v} \cdot \norm{a' - a} \quad  \text{for all} \quad a,a' \in A \cap \ballrx{\delta}{\hat{a}} \quad \text{and} \quad v \in \NormalLim{A}{a}.
\end{equation*}
For each $x \in \ballrx{\delta/2}{\hat{a}}$, Lemma~\ref{lemma:locally_closed_local_proj} states that the optimal metric projection $\Proj{A}{x}$ is non-empty and every $\pi \in \Proj{A}{x}$ lies in $A \cap \ballrx{\delta}{\hat{a}}$. 

Pick $\pi \in \Proj{A}{x}$, $a \in A \cap \ballrx{\delta}{\hat{a}}$, and recall that $x - \pi \in \NormalLim{A}{\pi}$. By property \eqref{eq:sr_normal_angle},
\begin{equation*}
     \varepsilon \norm{x - \pi} \cdot \norm{a - \pi} \geq \dotp{x - \pi}{a - \pi} = \norm{x - \pi}^2 - \dotp{x - \pi}{x - a}.
\end{equation*}
Lemma~\ref{lemma:superregular_proj_nonexpansove} gives $\norm{\pi - a} \leq (1 + \varepsilon) \norm{x - a}$, and we conclude that 
\begin{equation*}
     \dotp{x - \pi}{x - a} \geq \norm{x - \pi} \big(\norm{x - \pi} - \varepsilon \norm{a - \pi} \big) \geq \norm{x - \pi} \big(\norm{x - \pi} - \varepsilon(1+\varepsilon) \norm{x - a} \big). \qedhere
\end{equation*}
\end{proof}

\begin{corollary}
\label{corollary:basp_superregular_qomp}
Let $A \subseteq H$ be non-empty and super-regular at $\hat{a} \in A$. Then for every $\varepsilon > 0$ there exists $\delta > 0$ such that for every $x \in \ballrx{\delta/2}{\hat{a}} \setminus A$ it holds that
\begin{equation*}
        \dotp[\bigg]{\frac{x - \pi}{\norm{x - \pi}}}{\frac{x - \pi_\sigma}{\norm{x - \pi_\sigma}}} \geq \frac{1}{\sigma} - \varepsilon (1 + \varepsilon) 
\end{equation*}
for each $\pi \in \Proj{A}{x}$ and $\pi_\sigma \in \Proj{A}{x; \sigma} \cap \ballrx{\delta}{\hat{a}}$.
\end{corollary}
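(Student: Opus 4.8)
The plan is to obtain the corollary as an almost immediate consequence of Lemma~\ref{lemma:basp_superregular}, specialised to the point $a = \pi_\sigma$. First I would fix $\varepsilon > 0$ and take the $\delta > 0$ furnished by Lemma~\ref{lemma:basp_superregular}, so that for every $x \in \ballrx{\delta/2}{\hat{a}} \setminus A$, every $\pi \in \Proj{A}{x}$ and every $a \in A \cap \ballrx{\delta}{\hat{a}}$ we have
\begin{equation*}
    \dotp[\bigg]{\frac{x - \pi}{\norm{x - \pi}}}{\frac{x - a}{\norm{x - a}}} \geq \frac{\norm{x - \pi}}{\norm{x - a}} - \varepsilon(1 + \varepsilon).
\end{equation*}
Since any $\pi_\sigma \in \Proj{A}{x; \sigma} \cap \ballrx{\delta}{\hat{a}}$ is in particular a point of $A \cap \ballrx{\delta}{\hat{a}}$, I would simply substitute $a = \pi_\sigma$.

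Next I would replace the ratio $\norm{x - \pi}/\norm{x - \pi_\sigma}$ by its lower bound $1/\sigma$. The definitions give $\norm{x - \pi} = \dist{x}{A}$ because $\pi$ is an optimal metric projection, and $\norm{x - \pi_\sigma} \leq \sigma \cdot \dist{x}{A}$ because $\pi_\sigma$ is $\sigma$-quasioptimal. The only point requiring a word of care is that these quantities are strictly positive: since $x \notin A$ while both $\pi$ and $\pi_\sigma$ lie in $A$, neither denominator vanishes, so the normalised vectors and the ratio are well defined, and $\norm{x - \pi}/\norm{x - \pi_\sigma} \geq \dist{x}{A}/(\sigma \cdot \dist{x}{A}) = 1/\sigma$. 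Inserting this into the displayed inequality yields exactly the claimed bound.

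There is essentially no obstacle here: the whole content already sits in Lemma~\ref{lemma:basp_superregular}, and the corollary is just the observation that, among all $a \in A \cap \ballrx{\delta}{\hat{a}}$, the quasioptimal projection $\pi_\sigma$ is one whose distance to $x$ is controlled by $\sigma \cdot \dist{x}{A}$, which converts the length-ratio term of the lemma into the clean constant $1/\sigma$. Taking $a = \pi_\sigma$ is simply the sharpest way to use the quasioptimality hypothesis; for any other $a$ the bound $\norm{x-\pi}/\norm{x-a} - \varepsilon(1+\varepsilon)$ degrades gracefully but is of no further use to us.
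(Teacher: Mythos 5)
Your proposal is correct and is exactly the argument the paper intends (the corollary is stated without a separate proof precisely because it follows from Lemma~\ref{lemma:basp_superregular} by taking $a = \pi_\sigma$ and bounding $\norm{x-\pi}/\norm{x-\pi_\sigma} \geq 1/\sigma$ via $\norm{x-\pi} = \dist{x}{A}$ and $\norm{x-\pi_\sigma} \leq \sigma\,\dist{x}{A}$). Your remark that $x \notin A$ keeps both denominators strictly positive is the right point of care.
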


\begin{remark}
The Pythagorean property can be violated when the set is not super-regular (see Fig.~\ref{fig:bad_angular_spread_spiral}). Consider a closed piecewise linear curve $A \subset \Real^2$ with vertices $a_n$ defined for every $n \in \Z$ as $a_0 = (0,0)$ and 
\begin{equation*}
    a_{n+1} = a_n + 2^{-n} \cdot
    \begin{cases}
        (0,-1), & n \pmod 4 \equiv 0, \\
        (1,0),  & n \pmod 4 \equiv 1, \\
        (0,1),  & n \pmod 4 \equiv 2, \\
        (-1,0), & n \pmod 4 \equiv 3.
    \end{cases}
\end{equation*}
Simple series summation gives $a_n \to \hat{a} = \frac{2}{5}(1, -2)$ as $n \to \infty$. For every $k \in \Z$,
\begin{equation*}
    a_{4k} = \frac{2}{5}\left[1 - 2^{-4k}\right] (1, -2), \quad a_{4k - 1} = a_{4k} + 2^{-4k} (2, 0).
\end{equation*}
Therefore, every linear segment $[a_{4k}, a_{4k-1}]$ is horizontal, and the vertical line through $\hat{a}$ intersects each of them. Consider a sequence of points $(x_k)$ defined by
\begin{equation*}
    x_k = \frac{2}{5} \left( 1, -2\left[ 1 - \frac{17}{32} \cdot 2^{-4k} \right] \right), \quad k \in \Z.
\end{equation*}
Each $x_k$ has the same first coordinate as $\hat{a}$ and is equidistant from the horizontal segments $[a_{4k}, a_{4k-1}]$ and $[a_{4k+4}, a_{4k+3}]$ with $\dist{x_k}{A} = \frac{3}{8} \cdot 2^{-4k}$. The optimal metric projection of $x_k$ onto $A$ consists of two points $\Proj{A}{x_k} = \{ \pi_{k,1}, \pi_{k,2} \}$, where
\begin{equation*}
    \pi_{k,1} = \frac{2}{5} \left(1, -2\left[ 1 - 2^{-4k} \right] \right), \quad \pi_{k,2} = \frac{2}{5} \left(1, -2\left[ 1 - \frac{1}{16} \cdot 2^{-4k} \right] \right),
\end{equation*}
which are oriented in opposite directions
\begin{equation*}
    \dotp[\bigg]{\frac{x_k - \pi_{k,1}}{\norm{x_k - \pi_{k,1}}}}{\frac{x_k - \pi_{k,2}}{\norm{x_k - \pi_{k,2}}}} = -1.
\end{equation*}
Thus, since $x_k \to \hat{a}$ as $k \to \infty$, every neighbourhood of $\hat{a}$ contains a point whose optimal metric projections onto $A$ violate the Pythagorean property.
\end{remark}

\begin{figure}[!t]
\centering
    \includegraphics[width=0.8\linewidth]{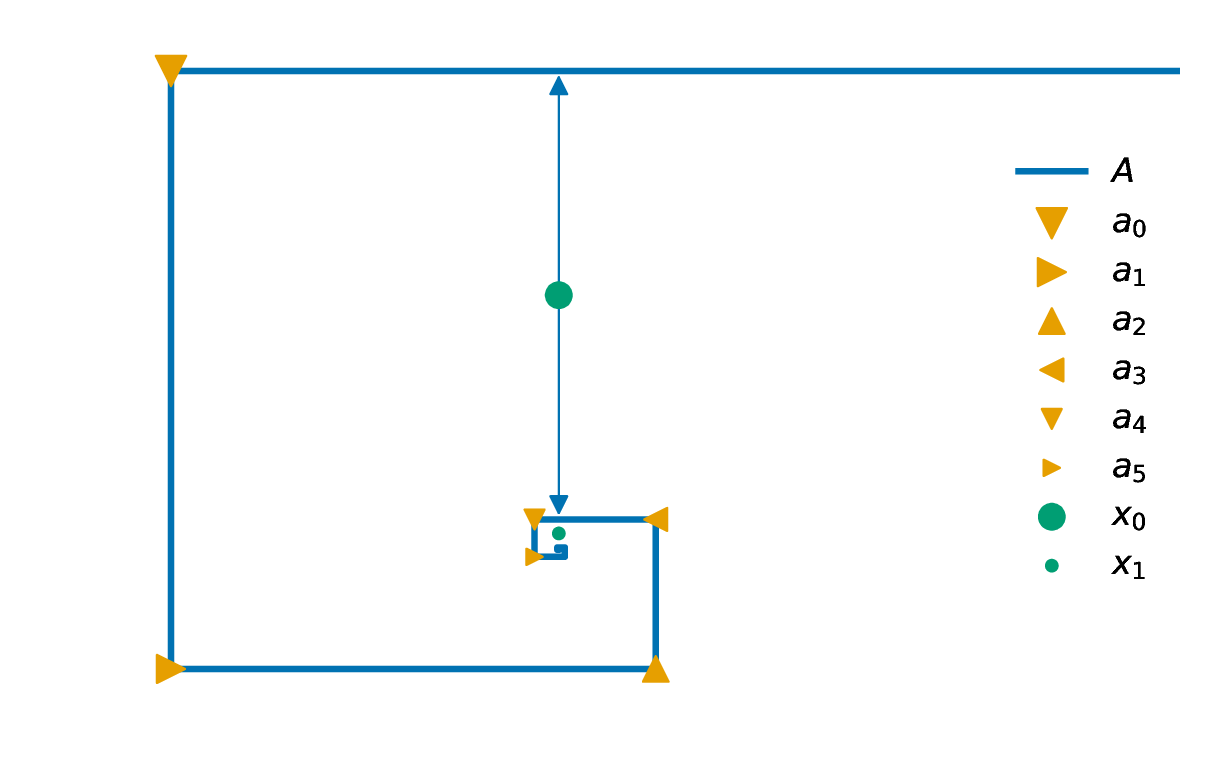}
\caption{The Pythagorean property can be violated when $A$ is not super-regular.}
\label{fig:bad_angular_spread_spiral}
\end{figure}

The Pythagorean property also gives us better control over the location of $\Proj{A}{x; \sigma}$ relative to $\Proj{A}{x}$. Recall a simple bound $\norm{\pi_\sigma - \pi} \leq (\sigma + 1) \cdot \dist{x}{A}$ that holds for every $\pi_\sigma \in \Proj{A}{x; \sigma}$ and $\pi \in \Proj{A}{x}$. This estimate turns out to be rather pessimistic for super-regular sets, especially when $\sigma$ is close to 1.

\begin{corollary}
\label{corollary:qmp_close_to_omp}
Let $A \subseteq H$ be non-empty and super-regular at $\hat{a} \in A$. Then for every $\varepsilon > 0$ there exists $\delta > 0$ such that for every $x \in \ballrx{\delta/2}{\hat{a}}$ and $\sigma \geq 1$ it holds that
\begin{equation*}
    \norm{\pi_\sigma - \pi} \leq \left( \varepsilon + \sqrt{\sigma^2 - 1 + \varepsilon^2}\right) \cdot \dist{x}{A}
\end{equation*}
for each $\pi \in \Proj{A}{x}$ and $\pi_\sigma \in \Proj{A}{x; \sigma} \cap \ballrx{\delta}{\hat{a}}$.
\end{corollary}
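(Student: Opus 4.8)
The plan is to argue directly from the super-regularity inequality~\eqref{eq:sr_normal_angle}, exploiting that $x-\pi$ is itself a proximal (hence limiting) normal to $A$ at $\pi$, of length exactly $\dist{x}{A}$. Fix $\varepsilon>0$ and use super-regularity of $A$ at $\hat a$ to choose $\delta>0$ such that $A\cap\ballrx{\delta}{\hat a}$ is closed in $\ballrx{\delta}{\hat a}$ and \eqref{eq:sr_normal_angle} holds there; this $\delta$ will not depend on $\sigma$. For any $x\in\ballrx{\delta/2}{\hat a}$, Lemma~\ref{lemma:locally_closed_local_proj} gives $\Proj{A}{x}\neq\emptyset$ together with $\norm{\pi-\hat a}\leq 2\norm{x-\hat a}<\delta$ for every $\pi\in\Proj{A}{x}$, so $\pi\in A\cap\ballrx{\delta}{\hat a}$; by hypothesis $\pi_\sigma$ lies in the same ball.

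The core step is the estimate
\[
\norm{\pi_\sigma-\pi}^2=\norm{\pi_\sigma-x}^2-\norm{\pi-x}^2+2\dotp{\pi_\sigma-\pi}{x-\pi}\leq(\sigma^2-1)\,\dist{x}{A}^2+2\varepsilon\,\dist{x}{A}\,\norm{\pi_\sigma-\pi},
\]
obtained by expanding $\norm{\pi_\sigma-x}^2=\norm{\pi_\sigma-\pi}^2+2\dotp{\pi_\sigma-\pi}{\pi-x}+\norm{\pi-x}^2$ and applying, in turn, quasioptimality $\norm{\pi_\sigma-x}\leq\sigma\,\dist{x}{A}$, the identity $\norm{\pi-x}=\dist{x}{A}$, and the super-regularity bound $\dotp{x-\pi}{\pi_\sigma-\pi}\leq\varepsilon\norm{x-\pi}\cdot\norm{\pi_\sigma-\pi}$, which is \eqref{eq:sr_normal_angle} with $v=x-\pi$, $a=\pi$, $a'=\pi_\sigma$. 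Choosing this expansion is the whole point: the only surviving inner product is the ``tangential'' term $\dotp{\pi_\sigma-\pi}{x-\pi}$, precisely the quantity super-regularity controls, whereas a crude Cauchy--Schwarz bound on $\dotp{\pi_\sigma-\pi}{\pi_\sigma-x}$ would leave only the much weaker $\norm{\pi_\sigma-\pi}\leq(\sigma+\varepsilon)\,\dist{x}{A}$.

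It then remains to solve the quadratic inequality $W^2-2\varepsilon\,\dist{x}{A}\,W-(\sigma^2-1)\,\dist{x}{A}^2\leq 0$ for $W=\norm{\pi_\sigma-\pi}\geq 0$, which gives $W\leq\varepsilon\,\dist{x}{A}+\sqrt{\varepsilon^2\,\dist{x}{A}^2+(\sigma^2-1)\,\dist{x}{A}^2}=\big(\varepsilon+\sqrt{\sigma^2-1+\varepsilon^2}\big)\dist{x}{A}$; the square root is real because $\sigma\geq 1$, and the degenerate case $\dist{x}{A}=0$ (which forces $x\in A$ and $\pi=\pi_\sigma=x$) is subsumed since all terms vanish. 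I do not expect a real obstacle: the only things to watch are the choice of expansion flagged above, checking that $\pi\in\ballrx{\delta}{\hat a}$ so that \eqref{eq:sr_normal_angle} applies to the pair $(\pi,\pi_\sigma)$, and noting that a single $\delta$ works uniformly over all $\sigma\geq 1$ because neither $\delta$ nor \eqref{eq:sr_normal_angle} involves $\sigma$.
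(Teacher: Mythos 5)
Your proof is correct and follows essentially the same route as the paper: the cosine-rule expansion of $\norm{\pi_\sigma-\pi}^2$, the super-regularity inequality \eqref{eq:sr_normal_angle} applied with $v=x-\pi\in\NormalLim{A}{\pi}$, $a=\pi$, $a'=\pi_\sigma$ (which is precisely the intermediate bound the paper imports from the proof of Lemma~\ref{lemma:basp_superregular}), quasioptimality of $\pi_\sigma$, and the resulting quadratic inequality. Your additional checks --- that $\pi$ lands in $\ballrx{\delta}{\hat a}$ via Lemma~\ref{lemma:locally_closed_local_proj}, the uniformity of $\delta$ in $\sigma$, and the degenerate case $x\in A$ --- are all sound.
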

\begin{proof}
From the proof of Lemma~\ref{lemma:basp_superregular}, we get 
\begin{equation*}
    \dotp{x - \pi}{x - \pi_\sigma} \geq \norm{x - \pi} \big(\norm{x - \pi} - \varepsilon \norm{\pi_\sigma - \pi} \big).
\end{equation*}
We put this estimate into the following expression of the squared norm
\begin{align*}
    \norm{\pi_\sigma - \pi}^2 &= \norm{\pi - x}^2 + \norm{\pi_\sigma - x}^2 - 2 \dotp{x - \pi}{x - \pi_\sigma} \\
    &\leq (\sigma^2 - 1) \norm{\pi - x}^2 + 2\varepsilon \norm{\pi - x} \norm{\pi_\sigma - \pi}.
\end{align*}
It remains to solve the inequality for $\norm{\pi_\sigma - \pi}$ and note that $\norm{\pi - x} = \dist{x}{A}$.
\end{proof}
\section{Convergence analysis}
\label{sec:convergence}

\subsection{Technical convergence lemma}
The techniques we use to study the convergence of the QAP \eqref{eq:qap} are inspired by \cite{lewis2009local}. Broadly speaking, we will approach the proof by showing that the steps taken by the method, $|b_{n+1} - a_{n}|$ and $|a_{n+1} - b_{n+1}|$, shrink with iterations. 

We will consider three separate scenarios, which differ in the values taken by the quasioptimality constants $\sigma_A$ and $\sigma_B$ and the properties that we require from the sets $A$ and $B$. These nuances lead to different estimates on the step sizes; the proofs of convergence, however, follow the same pattern, which we summarise in Lemma~\ref{lemma:convergence_under_the_hood}.

\begin{lemma}
\label{lemma:convergence_under_the_hood}
Let $A, B \subseteq H$ be non-empty and assume that there is an open set $\mathcal{V} \subseteq H$ such that $A \cap \mathcal{V}$ and $B \cap \mathcal{V}$ are closed in $\mathcal{V}$. Let $\mathcal{U} \subset \mathcal{V}$ be closed and $\kappa_A, \kappa_B > 0$. Suppose that, for $n \in \N$, sequences $(a_n) \subset A \cap \mathcal{U}$ and $(b_n) \subset B \cap \mathcal{U}$ satisfy
\begin{equation*}
    \norm{a_n - b_n} \leq \kappa_A \norm{b_n - a_{n-1}}, \quad \norm{b_{n+1} - a_n} \leq \kappa_B \norm{a_n - b_n}.
\end{equation*}
If $\kappa_A \cdot \kappa_B < 1$ then both $(a_n)$ and $(b_n)$ converge to $\hat{x} \in A \cap B \cap \mathcal{U}$ and, for $n \in \N_0$,
\begin{equation*}
    \norm{\hat{x} - a_n} \leq \tfrac{1 + \kappa_{A}}{1 - \kappa_{A} \kappa_{B}} (\kappa_{A} \kappa_{B})^n \cdot \norm{b_1 - a_0}, \quad \norm{\hat{x} - b_{n+1}} \leq \tfrac{(1 + \kappa_{B}) \kappa_{A}}{1 - \kappa_{A} \kappa_{B}} (\kappa_{A} \kappa_{B})^n \cdot \norm{b_1 - a_0}. 
\end{equation*}
\end{lemma}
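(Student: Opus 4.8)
The plan is to show that the two interleaved sequences $(a_n)$ and $(b_n)$ together form a single Cauchy sequence whose consecutive steps decay geometrically, and then to identify the limit and derive the quantitative tail bounds.

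First I would extract the geometric decay of the step sizes. Write $s_n = \norm{b_{n+1} - a_n}$ for the "$B$-steps" and $t_n = \norm{a_n - b_n}$ for the "$A$-steps". The two hypotheses say exactly $t_n \le \kappa_A s_{n-1}$ and $s_n \le \kappa_B t_n$, so chaining them gives $s_n \le \kappa_A \kappa_B \, s_{n-1}$ and $t_{n+1} \le \kappa_A \kappa_B \, t_n$ for $n \ge 1$. Setting $q = \kappa_A \kappa_B < 1$, we get $s_n \le q^{n-1} s_1$ (with $s_1 = \norm{b_2 - a_1} \le \kappa_B t_1 \le \kappa_B \kappa_A s_0 = q\,\norm{b_1 - a_0}$, so in fact $s_n \le q^n \norm{b_1-a_0}$) and similarly $t_n \le \kappa_A q^{n-1} \norm{b_1 - a_0}$. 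Thus all step lengths are bounded by a geometric series with ratio $q$ times the initial quantity $\norm{b_1 - a_0}$.

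Next I would show convergence. Interleave the sequences into $(z_k)$ with $z_{2n} = a_n$ and $z_{2n+1} = b_{n+1}$ for $n \ge 0$, so that $\norm{z_{k+1} - z_k}$ is either some $s_n$ or some $t_n$, each bounded by a constant times $q^{\lfloor k/2 \rfloor}\norm{b_1-a_0}$. Since $\sum_k q^{k/2} < \infty$, the sequence $(z_k)$ is Cauchy in $H$, hence converges to some $\hat x \in H$; consequently both $(a_n)$ and $(b_n)$ converge to $\hat x$. Because $\mathcal U$ is closed and contains every $a_n, b_n$ (for $n \ge 1$, and also $a_0$ if needed — here I should be slightly careful that the hypothesis places $a_n \in A\cap\mathcal U$ and $b_n\in B\cap\mathcal U$ for $n\ge 1$, while $a_0$ is only in $A$; since $\hat x$ is the limit of $(a_n)_{n\ge 1}$, we still get $\hat x \in \mathcal U$), we have $\hat x \in \mathcal U \subset \mathcal V$. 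Then $\hat x$ is a limit of points of $A \cap \mathcal V$, which is closed in $\mathcal V$, so $\hat x \in A$; likewise $\hat x \in B$. Hence $\hat x \in A \cap B \cap \mathcal U$.

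Finally, the tail estimates come from the triangle inequality applied to the telescoping tail. For $\norm{\hat x - a_n}$, bound it by $\sum_{j \ge n}\big(\norm{b_{j+1} - a_j} + \norm{a_{j+1} - b_{j+1}}\big) = \sum_{j\ge n}(s_j + t_{j+1}) \le \sum_{j\ge n}\big(q^j + \kappa_A q^j\big)\norm{b_1-a_0} = \frac{1+\kappa_A}{1-q}q^n\norm{b_1-a_0}$, using the geometric sum $\sum_{j\ge n}q^j = q^n/(1-q)$. For $\norm{\hat x - b_{n+1}}$, start the telescoping tail at $b_{n+1}$: $\norm{\hat x - b_{n+1}} \le \sum_{j\ge n}\norm{a_j - b_{j+1}} + \sum_{j \ge n}\norm{b_{j+2}-a_{j+1}}$; wait — more cleanly, $\norm{\hat x - b_{n+1}} \le \norm{a_n - b_{n+1}} + \norm{\hat x - a_n}$? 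That overshoots; instead bound directly $\norm{\hat x - b_{n+1}} \le \sum_{j\ge n+1} t_j + \sum_{j\ge n+1} s_j \le \sum_{j\ge n+1}(\kappa_A q^{j-1} + q^j)\norm{b_1-a_0}$, and factoring out one power of $q$ one gets $\frac{(1+\kappa_B)\kappa_A}{1-q}q^n\norm{b_1-a_0}$ after rewriting $s_j = q\cdot(\text{$A$-step bound})$ appropriately — I'd match the index bookkeeping so that the $\kappa_A$ and the extra $q$ line up to produce exactly the stated constant $(1+\kappa_B)\kappa_A$. The only genuinely delicate point is this constant-chasing in the $b$-estimate: one must be careful which steps ($s_j$ vs $t_j$) appear and with which exponent, so that the bound $s_j \le \kappa_B t_j \le \kappa_A\kappa_B s_{j-1}$ is applied on the correct side to collect the factor $(1+\kappa_B)\kappa_A$ rather than $(1+\kappa_A)$ or $(1+\kappa_B)$. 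Everything else is routine geometric-series summation.
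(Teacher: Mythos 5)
Your proposal is correct and follows essentially the same route as the paper: chain the two hypotheses to get geometric decay of the interleaved step sizes, conclude the merged sequence is Cauchy, place the limit in $A \cap B \cap \mathcal{U}$ via the closedness assumptions, and sum the geometric tails for the quantitative bounds. The constant-chasing you flagged for the $b_{n+1}$ estimate does work out exactly as you suspected, since $\kappa_A q^{j-1} + q^j = \kappa_A(1+\kappa_B)q^{j-1}$ with $q = \kappa_A\kappa_B$, which reproduces the stated factor $(1+\kappa_B)\kappa_A$.
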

\begin{proof}
By induction, we get that for all $n \in \N_0$ the sequences $(a_n)$ and $(b_n)$ satisfy
\begin{equation*}
    \norm{b_{n+1} - a_n} \leq (\kappa_A \kappa_B)^n \cdot \norm{b_1 - a_0}, \quad \norm{a_{n+1} - b_{n+1}} \leq \kappa_A (\kappa_A \kappa_B)^n \cdot \norm{b_1 - a_0}.
\end{equation*}
Next, we show that the sequence $( a_0, b_1, a_1, b_2, a_2, \ldots )$ is Cauchy. To this end, fix an arbitrary $n \in \N$; then for every $k > n$ we have
\begin{align*}
    \max\{ \norm{a_k - a_n}, \norm{b_k - a_n} \} &\leq \norm{b_{n+1} - a_{n}} + \norm{a_{n+1} - b_{n+1}} + \norm{b_{n+2} - a_{n+1}} + \ldots \\
    &\leq \norm{b_1 - a_0} \cdot \left[ (\kappa_A \kappa_B)^{n} + \kappa_A(\kappa_A \kappa_B)^{n} + (\kappa_A \kappa_B)^{n + 1} + \ldots \right] \\
    &= \tfrac{1 + \kappa_A}{1 - \kappa_A \kappa_B} (\kappa_A \kappa_B)^n \cdot \norm{b_1 - a_0},
\end{align*}
and for every $k \geq n$ it holds that 
\begin{align*}
    \max\{ \norm{a_k - b_n}, \norm{b_k - b_n} \} &\leq \norm{a_{n} - b_{n}} + \norm{b_{n+1} - a_{n}} + \norm{a_{n+1} - b_{n+1}} + \ldots \\
    &\leq \norm{b_1 - a_0} \cdot \left[ \kappa_A (\kappa_A \kappa_B)^{n-1} + (\kappa_A \kappa_B)^{n} + \kappa_A(\kappa_A \kappa_B)^{n} + \ldots \right] \\
    &= \tfrac{(1 + \kappa_{B}) \kappa_{A}}{1 - \kappa_{A} \kappa_{B}} (\kappa_{A} \kappa_{B})^{n-1} \cdot \norm{b_1 - a_0}.
\end{align*}
Therefore, the sequence is indeed Cauchy and has a limit $\hat{x}$, which satisfies 
\begin{equation*}
    \norm{\hat{x} - a_n} \leq \tfrac{1 + \kappa_{A}}{1 - \kappa_{A} \kappa_{B}} (\kappa_{A} \kappa_{B})^n \cdot \norm{b_1 - a_0}, \quad \norm{\hat{x} - b_{n+1}} \leq \tfrac{(1 + \kappa_{B}) \kappa_{A}}{1 - \kappa_{A} \kappa_{B}} (\kappa_{A} \kappa_{B})^n \cdot \norm{b_1 - a_0}. 
\end{equation*}
This $\hat{x}$ is a limit point of $A \cap \mathcal{U}$ and $B \cap \mathcal{U}$, which are closed in $\mathcal{V}$. So $\hat{x} \in A \cap B \cap \mathcal{U}$.
\end{proof}

\subsection{One projection is quasioptimal}
In this section we assume that the projection onto $B$ is optimal. The projection onto $A$ is allowed to be quasioptimal, and we specify that $A$ is super-regular. The following lemma adapts a sub-result of \cite[Thm.~5.2]{lewis2009local} to the case of quasioptimal projections. It will allow us to estimate the step $\norm{a_{n+1} - b_{n+1}}$ in terms of its predecessor $\norm{b_{n+1} - a_n}$ in the proof of Theorem~\ref{theorem:1qoptimal_convergence}. In Corollary~\ref{corollary:1qoptimal_convergence_2sr}, we obtain a faster convergence rate when $B$ is super-regular too. We illustrate the settings of Lemma~\ref{lemma:1step_1qoptimal} in Fig~\ref{fig:1qomp}.

\begin{figure}[ht]
\centering
    \includegraphics[width=0.9\linewidth]{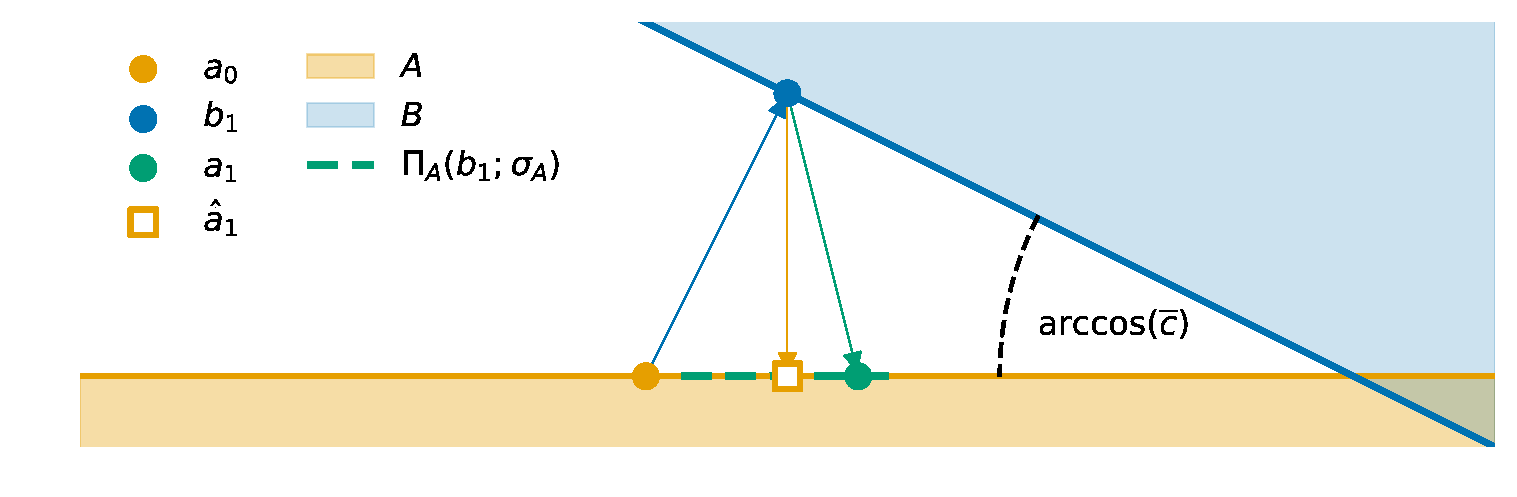}
\caption{The illustration of the settings of Lemma~\ref{lemma:1step_1qoptimal}.}
\label{fig:1qomp}
\end{figure}

\begin{lemma}
\label{lemma:1step_1qoptimal}
Let $A, B \subseteq H$ be non-empty and intersect at $\bar{x} \in A \cap B$, where
\begin{itemize}
    \item $A$ and $B$ are locally closed and intersect transversally with $\cos(A,B,\bar{x}) = \bar{c} < 1$, and
    \item $A$ is super-regular.
\end{itemize}
Pick $\varepsilon > 0$ and $c \in (\bar{c}, 1)$. There exists $\delta > 0$ such that for every $\sigma_A \geq 1$, each triplet of points $a_0 \in A \cap \ballrx{\delta}{\bar{x}}$, $b_1 \in \Proj{B}{a_0} \cap \ballrx{\delta}{\bar{x}}$ with $\Proj{A}{b_1} \cap \ballrx{\delta}{\bar{x}} \neq \emptyset$, and $a_1 \in \Proj{A}{b_1; \sigma_A}$ satisfies
\begin{equation*}
    \norm{a_1 - b_1} \leq \sigma_{A} (c + 2 \varepsilon) \cdot \norm{b_1 - a_0}.
\end{equation*}
If $a_0 \in A$ is such that $\norm{a_0 - \bar{x}} < \tfrac{\delta}{4}$ then $b_1$ and $a_1$ are well-defined, and the estimate holds for every $b_1 \in \Proj{B}{a_0}$.
\end{lemma}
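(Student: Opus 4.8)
The goal is to bound $\norm{a_1 - b_1}$ where $b_1 \in \Proj{B}{a_0}$ (so $a_0 - b_1 \in \NormalProx{B}{b_1}$) and $a_1 \in \Proj{A}{b_1;\sigma_A}$. The natural strategy, following \cite[Theorem~5.2]{lewis2009local}, is to estimate the angle between the direction $b_1 - a_0$ (which is $-(a_0-b_1)$, a direction roughly normal to $B$) and the direction $b_1 - a_1$ (which is roughly normal to $A$, via the Pythagorean property). Transversality says that normal directions to $A$ and $B$ at nearby points cannot be too aligned, so $b_1 - a_1$ cannot have too large a component along $b_1 - a_0$; the rest goes into a direction along which we gain contraction. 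Concretely, I would choose $\delta$ small enough that three things hold simultaneously on $\ballrx{\delta}{\overline x}$: (i) Lemma~\ref{lemma:locally_transversal} applies with the constant $c$, so that $\dotp{u}{v} < c$ for all unit $u \in -\NormalLim{A}{a}$, $v \in \NormalLim{B}{b}$ with $a,b$ in the neighbourhood; (ii) the super-regularity estimate \eqref{eq:sr_normal_angle} for $A$ holds with tolerance $\varepsilon$ on $\ballrx{\delta}{\overline x}$; (iii) the radii work out so that, by Lemma~\ref{lemma:locally_closed_local_proj}, the relevant optimal projections exist and land inside $\ballrx{\delta}{\overline x}$.

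Next I would set up the key inner-product computation. Write $a_1 \in \Proj{A}{b_1;\sigma_A}$ and pick any $\pi \in \Proj{A}{b_1} \cap \ballrx{\delta}{\overline x}$ (nonempty by hypothesis). The vector $b_1 - \pi \in \NormalProx{A}{\pi} \subseteq \NormalLim{A}{\pi}$, and $a_0 - b_1 \in \NormalProx{B}{b_1} \subseteq \NormalLim{B}{b_1}$, so after normalising, Lemma~\ref{lemma:locally_transversal} gives
\begin{equation*}
    \dotp{b_1 - \pi}{a_0 - b_1} \leq c \cdot \norm{b_1 - \pi} \cdot \norm{a_0 - b_1}.
\end{equation*}
Then expand $\norm{a_1 - b_1}^2$ or, better, directly estimate $\dotp{a_1 - b_1}{a_0 - b_1}$ by splitting $a_1 - b_1 = (a_1 - \pi) + (\pi - b_1)$. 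The first piece is controlled by Corollary~\ref{corollary:qmp_close_to_omp}: $\norm{a_1 - \pi} \leq (\varepsilon + \sqrt{\sigma_A^2 - 1 + \varepsilon^2}) \cdot \dist{b_1}{A} \leq (\varepsilon + \sqrt{\sigma_A^2-1+\varepsilon^2})\norm{b_1 - a_0}$ — but actually for the bound as stated (linear in $\sigma_A$ rather than involving $\sqrt{\sigma_A^2-1}$) it is cleaner to instead use the crude bound $\norm{a_1 - b_1} \leq \sigma_A \dist{b_1}{A}$ together with the Pythagorean-type alignment, i.e. Corollary~\ref{corollary:basp_superregular_qomp}, which says $\dotp{b_1 - \pi}{b_1 - a_1} \geq (\tfrac1{\sigma_A} - \varepsilon(1+\varepsilon)) \norm{b_1 - \pi}\norm{b_1 - a_1}$. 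Combining this with the transversality bound on $\dotp{b_1 - \pi}{a_0 - b_1}$ lets me replace the unknown direction $b_1 - a_1$ by the "known" near-normal $b_1 - \pi$ at a cost of a factor $\sigma_A$ (up to $\varepsilon$ terms), yielding $\norm{a_1 - b_1} \leq \sigma_A (c + O(\varepsilon)) \norm{b_1 - a_0}$, which after reorganising the $\varepsilon$'s becomes the claimed $\sigma_A(c + 2\varepsilon)$.

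The main obstacle is the bookkeeping of the $\varepsilon$-error terms and the choice of $\delta$: I must ensure that the super-regularity tolerance, the transversality gap $c - \overline c$, and the neighbourhood radii are chosen consistently so that all invoked lemmas (Lemma~\ref{lemma:locally_closed_local_proj}, Lemma~\ref{lemma:superregular_proj_nonexpansove}, Lemma~\ref{lemma:locally_transversal}, Corollary~\ref{corollary:basp_superregular_qomp}) apply on the same ball $\ballrx{\delta}{\overline x}$, and that the $\varepsilon(1+\varepsilon)$ and $2\varepsilon\,\sigma_A$ type terms can be absorbed into a single clean $2\varepsilon$ after possibly shrinking $\varepsilon$ first (note $\sigma_A$ is arbitrary, so I should phrase the intermediate bounds with absolute $\varepsilon$ not $\varepsilon\sigma_A$, which is why routing through Corollary~\ref{corollary:basp_superregular_qomp} rather than Corollary~\ref{corollary:qmp_close_to_omp} is the right move). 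For the final sentence, if $\norm{a_0 - \overline x} < \delta/4$, then $\dist{a_0}{B} \leq \norm{a_0 - \overline x} < \delta/4$, so by Lemma~\ref{lemma:locally_closed_local_proj} every $b_1 \in \Proj{B}{a_0}$ satisfies $\norm{b_1 - \overline x} \leq 2\norm{a_0 - \overline x} < \delta/2$; then $\dist{b_1}{A} \leq \norm{b_1 - \overline x} < \delta/2$, so again by Lemma~\ref{lemma:locally_closed_local_proj} the optimal projection $\Proj{A}{b_1}$ is nonempty with every element within $2\norm{b_1 - \overline x} < \delta$ of $\overline x$, hence inside $\ballrx{\delta}{\overline x}$, and the quasioptimal projection $a_1$ is nonempty as well since it contains $\Proj{A}{b_1}$ — so the hypotheses of the first part are automatically met and the estimate holds.
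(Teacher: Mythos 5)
Your overall skeleton is the right one and matches the paper's: reduce via $\norm{a_1 - b_1} \leq \sigma_A \norm{\pi - b_1} = \sigma_A\,\dist{b_1}{A}$ for an optimal projection $\pi \in \Proj{A}{b_1}\cap\ballrx{\delta}{\overline{x}}$, choose $\delta$ so that super-regularity of $A$, Lemma~\ref{lemma:locally_transversal} and Lemma~\ref{lemma:locally_closed_local_proj} all apply on the same ball, and your well-definedness argument for the case $\norm{a_0 - \overline{x}} < \delta/4$ is exactly the paper's. However, the core estimate is not assembled correctly. After the crude reduction, what remains is to show $\norm{\pi - b_1} \leq (c + 2\varepsilon)\norm{a_0 - b_1}$ --- a statement purely about the \emph{optimal} projection $\pi$, in which $a_1$, $\sigma_A$ and the Pythagorean property play no further role. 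The route you settle on (Corollary~\ref{corollary:basp_superregular_qomp} plus the transversality angle bound) cannot deliver this: both of those are angle inequalities, and angle information alone never yields a length comparison between $\norm{\pi - b_1}$ and $\norm{a_0 - b_1}$. Corollary~\ref{corollary:basp_superregular_qomp} is a red herring here; it is needed only in Lemma~\ref{lemma:1step_2qoptimal}, where $b_1$ is itself quasioptimal and $a_0 - b_1$ is no longer a normal to $B$.

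The missing step is the squared-norm expansion that you mention as an option and then discard: with $\pi \in \Proj{A}{b_1}$ one writes $\norm{\pi - b_1}^2 = \dotp{b_1 - \pi}{a_0 - \pi} + \dotp{\pi - b_1}{a_0 - b_1}$, bounds the first term by $\varepsilon\norm{b_1-\pi}\norm{a_0-\pi} \leq 2\varepsilon\norm{b_1-\pi}\norm{a_0-b_1}$ using super-regularity of $A$ (this uses $a_0, \pi \in A\cap\ballrx{\delta}{\overline{x}}$, $b_1 - \pi \in \NormalLim{A}{\pi}$, and $\norm{a_0 - \pi} \leq \norm{a_0-b_1}+\norm{b_1-\pi} \leq 2\norm{a_0-b_1}$, which is where the factor $2$ in $2\varepsilon$ comes from), and bounds the second term by $c\norm{\pi-b_1}\norm{a_0-b_1}$ via Lemma~\ref{lemma:locally_transversal}; dividing by $\norm{\pi - b_1}$ finishes. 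On that last point, also note a sign slip: the transversality lemma applies to $u \in -\NormalLim{A}{\pi}$, i.e.\ to $\pi - b_1$, so the correct conclusion is $\dotp{\pi - b_1}{a_0 - b_1} \leq c\norm{\pi-b_1}\norm{a_0-b_1}$, not the bound on $\dotp{b_1-\pi}{a_0-b_1}$ you wrote (those are not interchangeable, since the normal cone is not symmetric). As written, the proposal has a genuine gap in the central inequality even though all the needed ingredients appear in your list of what $\delta$ should guarantee.
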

\begin{proof}
Fix $\varepsilon$ and $c$. First, we set $\delta = \min\{ \delta_1(\varepsilon), \delta_2, \delta_3(c) \}$, where $\delta_1(\varepsilon)$ comes from the definition of super-regularity for $A$; $\delta_2$ ensures that $B \cap \ballrx{\delta_2}{\bar{x}}$ is closed in $\ballrx{\delta_2}{\bar{x}}$; and $\delta_3(c)$ ensures that the assertion of Lemma~\ref{lemma:locally_transversal} holds in $\ballrx{\delta_3(c)}{\bar{x}}$.

Consider $\hat{a}_1 \in \Proj{A}{b_1} \cap \ballrx{\delta}{\bar{x}}$. Then $\norm{a_1 - b_1} \leq \sigma_A \cdot \norm{\hat{a}_1 - b_1}$ and it remains to estimate $\norm{\hat{a}_1 - b_1}$ in terms of $\norm{b_1 - a_0}$. If either of them is zero, the proof is complete. Otherwise, we expand the squared norm
\begin{equation*}
    \norm{\hat{a}_1 - b_1}^2 = \dotp{b_1 - \hat{a}_1}{a_0 - \hat{a}_1} + \dotp{\hat{a}_1 - b_1}{a_0 - b_1}.
\end{equation*}
Recall that $A$ is super-regular at $\bar{x}$. Since $a_0, \hat{a}_1 \in A \cap \ballrx{\delta}{\bar{x}}$ and $b_1 - \hat{a}_1 \in \NormalLim{A}{\hat{a}_1}$,
\begin{equation*}
    \dotp{b_1 - \hat{a}_1}{a_0 - \hat{a}_1} \leq \varepsilon \norm{b_1 - \hat{a}_1} \cdot \norm{a_0 - \hat{a}_1} \leq 2\varepsilon \norm{b_1 - \hat{a}_1} \cdot \norm{a_0 - b_1}.
\end{equation*}
Since $a_0 - b_1 \in \NormalLim{B}{b_1}$ and $b_1 \in B \cap \ballrx{\delta}{\bar{x}}$, the transversality and Lemma~\ref{lemma:locally_transversal} give
\begin{equation*}
    \dotp{\hat{a}_1 - b_1}{a_0 - b_1} \leq c\norm{\hat{a}_1 - b_1} \cdot \norm{a_0 - b_1}.
\end{equation*}
Adding the two estimates, we get $\norm{\hat{a}_1 - b_1} \leq (c + 2\varepsilon) \cdot \norm{a_0 - b_1}$.

Finally, we show that the points $b_1$ and $a_1$ are well-defined when $a_0 \in A \cap \ballrx{\delta / 4}{\overline{x}}$. By Lemma~\ref{lemma:locally_closed_local_proj}, $\Proj{B}{a_0}$ is non-empty and $\norm{b_1 - \bar{x}} \leq 2 \norm{a_0 - \bar{x}} < \delta / 2$. We use Lemma~\ref{lemma:locally_closed_local_proj} again to show that $\emptyset \neq \Proj{A}{b_1} \subseteq \Proj{A}{b_1; \sigma_A}$ and $\Proj{A}{b_1} \subseteq \ballrx{\delta}{\overline{x}}$.
\end{proof}

\begin{theorem}
\label{theorem:1qoptimal_convergence}
Let $A, B \subseteq H$ be non-empty and intersect at $\bar{x} \in A \cap B$, where
\begin{itemize}
    \item $A$ and $B$ are locally closed and intersect transversally with $\cos(A,B,\bar{x}) = \bar{c} < 1$, and
    \item $A$ is super-regular.
\end{itemize}
For every $c \in (\bar{c}, 1)$ there exists $\delta > 0$ such that for each $\sigma_A \geq 1$ satisfying $\kappa = \sigma_A \cdot c < 1$ and each $a_0 \in A$ satisfying $\norm{a_0 - \bar{x}} < \frac{1 - \kappa}{8} \delta$, the QAP \eqref{eq:qap} with $\sigma_B = 1$ converge to $\hat{x} \in A \cap B$ such that $\norm{\hat{x} - \bar{x}} < \tfrac{3}{8} \delta$ and, for $n \in \N_0$,
\begin{equation*}
    \norm{\hat{x} - a_n} \leq \tfrac{1 + \kappa}{1 - \kappa} \kappa^n \cdot \dist{a_0}{B}, \quad \norm{\hat{x} - b_{n+1}} \leq \tfrac{2 \kappa}{1 - \kappa} \kappa^{n} \cdot \dist{a_0}{B}.
\end{equation*}
\end{theorem}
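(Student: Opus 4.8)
The plan is to reduce Theorem~\ref{theorem:1qoptimal_convergence} to the technical convergence Lemma~\ref{lemma:convergence_under_the_hood} by establishing the two one-step contraction estimates with $\kappa_A = \kappa = \sigma_A c$ and $\kappa_B = 1$. The estimate on the $A$-step, $\norm{a_{n+1} - b_{n+1}} \leq \kappa \cdot \norm{b_{n+1} - a_n}$, is exactly what Lemma~\ref{lemma:1step_1qoptimal} provides (with $\varepsilon$ absorbed into the choice of $c$, say by running Lemma~\ref{lemma:1step_1qoptimal} with $c' = c$ and $\varepsilon$ so small that $\sigma_A(c' + 2\varepsilon) < 1$ stays consistent with $\kappa = \sigma_A c < 1$; more cleanly, pick $c_1 \in (\overline c, c)$ and $\varepsilon$ with $c_1 + 2\varepsilon \leq c$). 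The $B$-step is the optimal metric projection onto $B$, so $\norm{b_{n+1} - a_n} = \dist{a_n}{B} \leq \norm{a_n - b_n}$ trivially, which gives $\kappa_B = 1$. Since $\kappa_A \kappa_B = \kappa < 1$, Lemma~\ref{lemma:convergence_under_the_hood} then yields convergence to a common limit $\hat x \in A \cap B$ together with the stated geometric decay; the only remaining point is to substitute $\norm{b_1 - a_0} = \dist{a_0}{B}$ in the conclusion of Lemma~\ref{lemma:convergence_under_the_hood} and to check the rate constants match ($\tfrac{1+\kappa_A}{1-\kappa_A\kappa_B} = \tfrac{1+\kappa}{1-\kappa}$ and $\tfrac{(1+\kappa_B)\kappa_A}{1-\kappa_A\kappa_B} = \tfrac{2\kappa}{1-\kappa}$).

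The bookkeeping that needs care — and the main obstacle — is the \emph{induction that keeps the iterates inside the neighbourhood} where Lemma~\ref{lemma:1step_1qoptimal} applies. Lemma~\ref{lemma:1step_1qoptimal} requires $a_n, b_{n+1} \in \ballrx{\delta}{\overline x}$ and $\Proj{A}{b_{n+1}} \cap \ballrx{\delta}{\overline x} \neq \emptyset$, so I cannot simply invoke it for all $n$ at once; I must show inductively that the whole sequence $(a_0, b_1, a_1, b_2, \dots)$ stays in $\ballrx{\delta}{\overline x}$ (or a slightly smaller ball). The tool for this is the Cauchy-type bound already appearing in the proof of Lemma~\ref{lemma:convergence_under_the_hood}: if the first $n$ steps are valid, then $\norm{a_k - a_0}, \norm{b_k - a_0}$ are bounded by $\tfrac{1+\kappa}{1-\kappa}\norm{b_1 - a_0}$ for all admissible $k$, and $\norm{b_1 - a_0} = \dist{a_0}{B} \leq \norm{a_0 - \overline x}$. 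Combining with $\norm{a_0 - \overline x} < \tfrac{1-\kappa}{8}\delta$ gives $\norm{a_k - \overline x} \leq \norm{a_k - a_0} + \norm{a_0 - \overline x} \leq \big(\tfrac{1+\kappa}{1-\kappa} + 1\big)\norm{a_0 - \overline x} = \tfrac{2}{1-\kappa}\norm{a_0 - \overline x} < \tfrac{\delta}{4}$, comfortably inside $\ballrx{\delta}{\overline x}$; the same bound handles $b_{k}$, and also forces $\Proj{A}{b_{n+1}} \subseteq \ballrx{\delta}{\overline x}$ since $\norm{b_{n+1} - \overline x} < \delta/4$ puts us in the regime of the last paragraph of Lemma~\ref{lemma:1step_1qoptimal} (or of Lemma~\ref{lemma:locally_closed_local_proj} part~2). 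So the induction closes: validity of the first $n$ steps $\Rightarrow$ all iterates up to step $n$ lie in $\ballrx{\delta/4}{\overline x}$ $\Rightarrow$ Lemma~\ref{lemma:1step_1qoptimal} applies to produce step $n+1$ within the ball.

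Concretely the steps are: (i) fix $c \in (\overline c, 1)$, choose $c_1 \in (\overline c, c)$ and $\varepsilon > 0$ with $c_1 + 2\varepsilon \leq c$, and let $\delta$ be the constant from Lemma~\ref{lemma:1step_1qoptimal} for this $\varepsilon, c_1$; (ii) set $\kappa = \sigma_A c$, assume $\kappa < 1$ and $\norm{a_0 - \overline x} < \tfrac{1-\kappa}{8}\delta$; (iii) prove by induction on $n$ that the iterates $a_0, b_1, \dots, b_n, a_n$ are well-defined, lie in $\ballrx{\delta/4}{\overline x}$, and satisfy $\norm{a_k - b_k} \leq \kappa \norm{b_k - a_{k-1}}$ and $\norm{b_{k+1} - a_k} = \dist{a_k}{B} \leq \norm{a_k - b_k}$, using the Cauchy bound above for the neighbourhood control and Lemma~\ref{lemma:1step_1qoptimal} for the contraction; (iv) apply Lemma~\ref{lemma:convergence_under_the_hood} with $\mathcal U = \overline{\ballrx{\delta/4}{\overline x}}$, $\mathcal V = \ballrx{\delta}{\overline x}$ (intersections of $A$ and $B$ with $\mathcal V$ are closed by the choice of $\delta$), $\kappa_A = \kappa$, $\kappa_B = 1$, to get the limit $\hat x \in A \cap B \cap \overline{\ballrx{\delta/4}{\overline x}}$ and the decay estimates, rewriting $\norm{b_1 - a_0}$ as $\dist{a_0}{B}$; (v) finally note $\norm{\hat x - \overline x} \leq \norm{\hat x - a_0} + \norm{a_0 - \overline x} \leq \big(\tfrac{1+\kappa}{1-\kappa}+1\big)\norm{a_0 - \overline x} = \tfrac{2}{1-\kappa}\norm{a_0-\overline x} < \tfrac{\delta}{4} < \tfrac{3}{8}\delta$. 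The delicate quantitative choice is matching the $\tfrac{1-\kappa}{8}$ slack in the hypothesis on $\norm{a_0 - \overline x}$ against the $\tfrac{\delta}{4}$ radius needed in Lemma~\ref{lemma:1step_1qoptimal}'s last paragraph and against the factor $\tfrac{1+\kappa}{1-\kappa}$ from the Cauchy bound — one checks $\tfrac{2}{1-\kappa} \cdot \tfrac{1-\kappa}{8} = \tfrac14$, so everything is consistent.
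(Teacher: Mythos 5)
Your proposal is correct and follows essentially the same route as the paper: choose $\delta$ via Lemma~\ref{lemma:1step_1qoptimal} with $c' + 2\varepsilon = c$, obtain $\kappa_A = \sigma_A c$ and $\kappa_B = 1$, run an induction to keep all iterates in $\ballrx{\delta/4}{\overline{x}}$, and conclude with Lemma~\ref{lemma:convergence_under_the_hood}. The only cosmetic difference is that you bound $\norm{a_n - \overline{x}}$ by the uniform geometric-series estimate $\tfrac{2}{1-\kappa}\norm{a_0 - \overline{x}}$, whereas the paper carries the slightly sharper quantity $\tfrac{\norm{a_0-\overline{x}}}{1-\kappa}\bigl[2 - \kappa^n(1+\kappa)\bigr]$ through the induction; both yield the same $\delta/4$ threshold.
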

\begin{proof}
Fix $c$ and choose $\delta$ as follows: pick some $c' \in (\bar{c}, c)$, set $\varepsilon = \frac{1}{2} (c' - c)$ and use the value of $\delta$ from Lemma~\ref{lemma:1step_1qoptimal} that corresponds to $c'$ and $\varepsilon$. This gives $\sigma_A (c' + 2\varepsilon) = \kappa < 1$.

Let us assume for a moment that the sequences $( a_n )$ and $( b_n )$ are well-defined and $\norm{a_n - \bar{x}} < \delta / 4$ for all $n \in \N_0$. Then Lemma~\ref{lemma:1step_1qoptimal} and the definition of $\Proj{B}{a_n}$ guarantee
\begin{equation*}
    \norm{a_n - b_n} \leq \kappa \norm{b_n - a_{n-1}}, \quad \norm{b_{n+1} - a_n} \leq \norm{a_n - b_n}, \quad n \in \N.
\end{equation*}
In addition, for every $n \in \N$ we have
\begin{equation*}
    \norm{b_{n} - \bar{x}} \leq \norm{b_n - a_{n-1}} + \norm{a_{n-1} - \bar{x}} < \norm{b_1 - a_0} + \norm{a_{n-1} - \bar{x}} < \tfrac{3 - \kappa}{8} \delta = \tilde{\delta}.
\end{equation*}
This means that we can apply Lemma~\ref{lemma:convergence_under_the_hood}. Setting $\mathcal{V} = \ballrx{\delta}{\bar{x}}$, $\mathcal{U} = \overline{\ballrx{\tilde{\delta}}{\bar{x}}}$, $\kappa_A = \kappa$ and $\kappa_B = 1$, we get the convergence to a point $\hat{x} \in A \cap B \cap \mathcal{U}$ at a rate
\begin{equation*}
    \norm{\hat{x} - a_n} \leq \tfrac{1 + \kappa}{1 - \kappa} \kappa^n \cdot \norm{b_1 - a_0}, \quad \norm{\hat{x} - b_{n+1}} \leq \tfrac{2 \kappa}{1 - \kappa} \kappa^{n} \cdot \norm{b_1 - a_0}, \quad n \in \N_0.
\end{equation*}
It remains to note that $\norm{b_1 - a_0} = \dist{a_0}{B}$.

Now we verify our assumptions. Let $n = 0$ and $d_0 =\norm{a_0 - \bar{x}} < \frac{1 - \kappa}{8} \delta$. As $d_0 < \delta/4$, Lemma~\ref{lemma:1step_1qoptimal} states that $b_1$ and $a_1$ exist and $\norm{a_1 - b_1} \leq \kappa \norm{b_1 - a_0}$. Moreover, 
\begin{align*}
    \norm{a_1 - \bar{x}} &\leq \norm{a_1 - b_1} + \norm{b_1 - a_0} + \norm{a_0 - \bar{x}} \\
    &\leq (2 + \kappa) d_0 = \tfrac{d_0}{1 - \kappa} \left[ 2 - \kappa(1 + \kappa)\right] < \delta / 4.
\end{align*}
Suppose that the sequences are well-defined up until $b_n$ and $a_n$, and that 
\begin{equation*}
    \norm{a_{n} - \bar{x}} \leq \tfrac{d_0}{1 - \kappa} \left[ 2 - \kappa^{n} (1 + \kappa) \right].
\end{equation*}
Then $\norm{a_{n} - \bar{x}} < \delta / 4$, and we can use Lemma~\ref{lemma:1step_1qoptimal} again to show that $b_{n+1}$ and $a_{n+1}$ exist and satisfy $\norm{a_{n+1} - b_{n+1}} \leq \kappa \norm{b_{n+1} - a_n}$. This leads to
\begin{align*}
    \norm{a_{n+1} - \bar{x}} &\leq \norm{a_{n+1} - b_{n+1}} + \norm{b_{n+1} - a_{n}} + \norm{a_{n} - \bar{x}} \\
    &\leq (1+\kappa) \kappa^n d_0 + \tfrac{d_0}{1 - \kappa} \left[ 2 - \kappa^{n} (1 + \kappa) \right] \\
    &= \tfrac{d_0}{1 - \kappa} \left[ 2 - \kappa^{n+1} (1 + \kappa) \right] < \delta / 4. 
\end{align*}
It follows that $(a_n)$ and $(b_n)$ are well-defined and $(a_n) \subset \ballrx{\delta / 4}{\overline{x}}$.
\end{proof}

\begin{corollary}
\label{corollary:1qoptimal_convergence_2sr}
Let $A, B \subseteq H$ be non-empty and intersect at $\bar{x} \in A \cap B$, where
\begin{itemize}
    \item $A$ and $B$ are locally closed and intersect transversally with $\cos(A,B,\bar{x}) = \bar{c} < 1$, and
    \item $A$ and $B$ are super-regular.
\end{itemize}
For every $c \in (\bar{c}, 1)$ there exists $\delta > 0$ such that for each $\sigma_A \geq 1$ satisfying $c\kappa < 1$ with $\kappa = \sigma_A \cdot c$ and each $a_0 \in A$ satisfying $\norm{a_0 - \bar{x}} < \frac{1 - c \kappa}{4[2 + \kappa(1-c)]} \delta$, the QAP \eqref{eq:qap} with $\sigma_B = 1$ converge to $\hat{x} \in A \cap B$ such that $\norm{\hat{x} - \bar{x}} < \tfrac{3}{8} \delta$ and, for $n \in \N_0$,
\begin{equation*}
    \norm{\hat{x} - a_n} \leq \tfrac{1 + \kappa}{1 - c \kappa} (c \kappa)^n \cdot \dist{a_0}{B}, \quad \norm{\hat{x} - b_{n+1}} \leq \tfrac{(1 + c) \kappa}{1 - c \kappa} (c\kappa)^{n} \cdot \dist{a_0}{B}.
\end{equation*}
\end{corollary}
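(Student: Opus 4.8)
The plan is to imitate the proof of Theorem~\ref{theorem:1qoptimal_convergence} and reduce everything to the technical convergence Lemma~\ref{lemma:convergence_under_the_hood}, but now to squeeze an extra factor $c$ out of the step $\norm{b_{n+1} - a_n}$ by exploiting that $B$ is super-regular. Concretely, I would fix $c \in (\overline{c}, 1)$, pick $c' \in (\overline{c}, c)$ and a small $\varepsilon > 0$, and take $\delta > 0$ small enough to serve at once the definitions of super-regularity of $A$ and of $B$ at $\overline{x}$, the conclusion of Lemma~\ref{lemma:locally_transversal} at level $c'$, and the conclusions of Lemma~\ref{lemma:1step_1qoptimal} and Corollary~\ref{corollary:qmp_close_to_omp}. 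The target is a pair of constants $\kappa_A \le \kappa = \sigma_A c$ and $\kappa_B \le c$, so that $\kappa_A \kappa_B \le c\kappa < 1$ is exactly the standing hypothesis; Lemma~\ref{lemma:convergence_under_the_hood} then delivers the common limit $\hat{x} \in A \cap B$ and the two displayed geometric rates, after rewriting $\norm{b_1 - a_0} = \dist{a_0}{B}$ because $b_1 \in \Proj{B}{a_0}$ is an optimal projection ($\sigma_B = 1$).

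The $A$-step is handled as before: Lemma~\ref{lemma:1step_1qoptimal} gives $\norm{a_{n+1} - b_{n+1}} \le \sigma_A(c' + 2\varepsilon)\norm{b_{n+1} - a_n}$, and one absorbs the $\varepsilon$ so that $\sigma_A(c' + 2\varepsilon) \le \kappa$. The genuinely new ingredient is the $B$-step. Since $\sigma_B = 1$, the point $b_{n+1} \in \Proj{B}{a_n}$ satisfies $a_n - b_{n+1} \in \NormalLim{B}{b_{n+1}}$ and $\norm{b_{n+1} - a_n} = \dist{a_n}{B}$, and I would start from
\begin{equation*}
    \norm{b_{n+1} - a_n}^2 = \dotp{a_n - b_{n+1}}{a_n - b_n} + \dotp{a_n - b_{n+1}}{b_n - b_{n+1}}.
\end{equation*}
The second term is $\le 2\varepsilon \norm{a_n - b_{n+1}} \norm{a_n - b_n}$ by super-regularity of $B$ together with $\norm{b_n - b_{n+1}} \le \norm{b_n - a_n} + \dist{a_n}{B} \le 2\norm{a_n - b_n}$. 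For the first term I would route through the optimal projection $\hat{a}_n \in \Proj{A}{b_n}$, writing $a_n - b_n = (\hat{a}_n - b_n) + (a_n - \hat{a}_n)$: the piece $\dotp{a_n - b_{n+1}}{\hat{a}_n - b_n}$ is $\le c' \norm{a_n - b_{n+1}} \norm{a_n - b_n}$ by transversality and Lemma~\ref{lemma:locally_transversal} (since $\hat{a}_n - b_n \in -\NormalLim{A}{\hat{a}_n}$ and $\norm{\hat{a}_n - b_n} = \dist{b_n}{A} \le \norm{a_n - b_n}$), while the remaining piece $\dotp{a_n - b_{n+1}}{a_n - \hat{a}_n}$ is meant to be swept into the error: by Corollary~\ref{corollary:qmp_close_to_omp} the gap $\norm{a_n - \hat{a}_n}$ is small relative to $\dist{b_n}{A}$, and by Corollary~\ref{corollary:basp_superregular_qomp} the direction $a_n - \hat{a}_n$ is essentially orthogonal to the normal $b_n - \hat{a}_n$, i.e. essentially tangent to $A$. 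Collecting the three estimates and dividing by $\norm{b_{n+1} - a_n}$ should give $\norm{b_{n+1} - a_n} \le (c' + O(\varepsilon))\norm{a_n - b_n}$, and shrinking $\varepsilon$ (hence $c'$) so that $c' + O(\varepsilon) \le c$ yields $\kappa_B = c$.

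The rest is the bookkeeping of Theorem~\ref{theorem:1qoptimal_convergence}. An induction on $n$, using Lemma~\ref{lemma:1step_1qoptimal} and Lemma~\ref{lemma:locally_closed_local_proj} for the existence and localisation of the projections, shows that if $\norm{a_0 - \overline{x}}$ lies below the stated threshold then all $b_{n+1}$, $a_{n+1}$ are well-defined, $(a_n)$ stays in $\ballrx{\delta/4}{\overline{x}}$ and $(b_n)$ in a slightly larger ball, so that both step inequalities hold for every $n$; the precise constant $\tfrac{1 - c\kappa}{4[2 + \kappa(1 - c)]}\delta$ is exactly what keeps the geometric estimate $\norm{a_n - \overline{x}} \le \norm{a_0 - \overline{x}}\bigl(1 + (1 + \kappa)\sum_{j \ge 0}(c\kappa)^j\bigr) = \norm{a_0 - \overline{x}}\,\tfrac{2 + \kappa(1 - c)}{1 - c\kappa}$ below $\delta/4$. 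Applying Lemma~\ref{lemma:convergence_under_the_hood} with $\kappa_A = \kappa$, $\kappa_B = c$, $\mathcal{V} = \ballrx{\delta}{\overline{x}}$ and $\mathcal{U}$ a closed ball of radius $< \tfrac{3}{8}\delta$ then closes the argument.

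The main obstacle is the $B$-step. The naive bound $\norm{b_{n+1} - a_n} = \dist{a_n}{B} \le \norm{a_n - b_n}$ only gives $\kappa_B = 1$, which is the content of Theorem~\ref{theorem:1qoptimal_convergence}; and because $a_n$ is merely a $\sigma_A$-quasioptimal projection onto $A$, one cannot invoke transversality at $a_n$ directly, nor treat $a_n - b_n$ as a limiting normal to $A$. Pushing the constant down to $c$ genuinely couples the super-regularity of $B$ with the Pythagorean property of $A$, and the delicate point I would need to check with care is that the error contributions originating from the gap $a_n - \hat{a}_n$ (whose size Corollary~\ref{corollary:qmp_close_to_omp} controls only up to a factor $\sqrt{\sigma_A^2 - 1}$) do not reintroduce a dependence on $\sigma_A$ that would shrink the admissible range below $c\kappa < 1$.
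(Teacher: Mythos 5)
Your overall architecture matches the paper's: extract $\kappa_A=\sigma_A(c'+2\varepsilon)$ from Lemma~\ref{lemma:1step_1qoptimal}, squeeze a factor close to $c$ out of the $B$-step using the super-regularity of $B$, and feed both constants into Lemma~\ref{lemma:convergence_under_the_hood}; the localisation and induction on $\norm{a_n-\overline{x}}$ are also as in the paper. The problem is exactly the point you flag at the end: the cross term $\dotp{a_n-b_{n+1}}{a_n-\hat a_n}$ cannot be swept into an $O(\varepsilon)$ error. Corollary~\ref{corollary:qmp_close_to_omp} only gives $\norm{a_n-\hat a_n}\lesssim\sqrt{\sigma_A^2-1}\cdot\dist{b_n}{A}$, which is \emph{comparable to}, not small relative to, $\dist{b_n}{A}$; and the observation that $a_n-\hat a_n$ is essentially tangent to $A$ does not help, because transversality (Lemma~\ref{lemma:locally_transversal}) controls only pairings of a normal to $A$ with a normal to $B$ --- a tangent direction to $A$ can pair with the normal $a_n-b_{n+1}\in\NormalLim{B}{b_{n+1}}$ up to a factor $\sqrt{1-c^2}$, which is large precisely when $c$ is small. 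What your decomposition actually yields is
\begin{equation*}
    \norm{b_{n+1}-a_n}\leq\Big(c+\sqrt{1-c^2}\sqrt{\sigma_A^2-1}+O(\varepsilon)\Big)\cdot\dist{b_n}{A},
\end{equation*}
i.e.\ (up to normalisation) the constant $\kappa_B$ of Theorem~\ref{theorem:2qoptimal_convergence} with $\sigma_B=1$, not $c$.

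This is not an artifact of your method. Take $A$ the $x$-axis and $B$ the line at angle $\theta$ in $\Real^2$, so $\overline{c}=\cos\theta$; with $b_n=t(\cos\theta,\sin\theta)$ and $a_n=\big(t\cos\theta+\sqrt{\sigma_A^2-1}\,t\sin\theta,\,0\big)\in\Proj{A}{b_n;\sigma_A}$ one gets $\norm{b_{n+1}-a_n}/\norm{a_n-b_n}=\big(\cos\theta+\sqrt{\sigma_A^2-1}\sin\theta\big)/\sigma_A$, which exceeds $\cos\theta$ whenever $\tan\theta>\sqrt{(\sigma_A-1)/(\sigma_A+1)}$ (e.g.\ $\theta=60^\circ$, $\sigma_A=1.1$ gives a ratio of about $0.82$ against $c\approx 0.5$). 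Hence the inequality $\norm{b_{n+1}-a_n}\leq c\norm{a_n-b_n}$, and with it the rate $(c\kappa)^n$, cannot be established for $\sigma_A>1$. For what it is worth, the paper's own one-line proof shares this gap: applying Lemma~\ref{lemma:1step_1qoptimal} with the roles of $A$ and $B$ reversed requires $b_n-a_n\in\NormalLim{A}{a_n}$, which holds only when $a_n$ is an \emph{optimal} projection, i.e.\ $\sigma_A=1$. So your attempt is faithful to the source and correctly isolates the weak link; the honest conclusion is that for $\sigma_A>1$ one must fall back either on $\kappa_B=1$ (Theorem~\ref{theorem:1qoptimal_convergence}) or on the $\kappa_B$ of Theorem~\ref{theorem:2qoptimal_convergence}.
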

\begin{proof}
The proof mimics that of Theorem~\ref{theorem:1qoptimal_convergence}, so we only highlight the differences. We need to make sure that the results of Lemma~\ref{lemma:1step_1qoptimal} can be applied to both $A$ and $B$. Therefore, we choose $c'$ and $\varepsilon$ as in Theorem~\ref{theorem:1qoptimal_convergence} and set $\delta = \min\{ \delta_A, \delta_B \}$, where $\delta_A$ and $\delta_B$ come from Lemma~\ref{lemma:1step_1qoptimal} for $A$ and $B$, respectively. It follows that
\begin{equation*}
    \norm{a_n - b_n} \leq \kappa \norm{b_n - a_{n-1}}, \quad \norm{b_{n+1} - a_n} \leq c\norm{a_n - b_n}, \quad n \in \N,
\end{equation*}
provided that the sequences are well-defined and $(a_n) \subset \ballrx{\delta / 4}{\bar{x}}$. This can be shown by a modified induction argument, where we prove that 
\begin{equation*}
    \norm{a_{n} - \bar{x}} \leq \tfrac{\norm{a_0 - \bar{x}}}{1 - c \kappa} \left[ 2 + \kappa(1-c)- (c\kappa)^{n} (1 + \kappa) \right] < \delta / 4. \qedhere
\end{equation*}
\end{proof}

\subsection{Both projections are quasioptimal}
Now, we even out the assumptions related to $A$ and $B$. We suppose that both sets are super-regular and consider quasioptimal metric projections with $\sigma_A > 1$ and $\sigma_B > 1$. In Lemma~\ref{lemma:1step_2qoptimal} we estimate the length of the steps taken by the QAP (see Fig.~\ref{fig:2qomp}). This result relies on the Pythagorean property for $B$ (unlike the previous Lemma~\ref{lemma:1step_1qoptimal}) and leads to our second convergence Theorem~\ref{theorem:2qoptimal_convergence}. 

\begin{figure}[ht!]
\centering
    \includegraphics[width=0.9\linewidth]{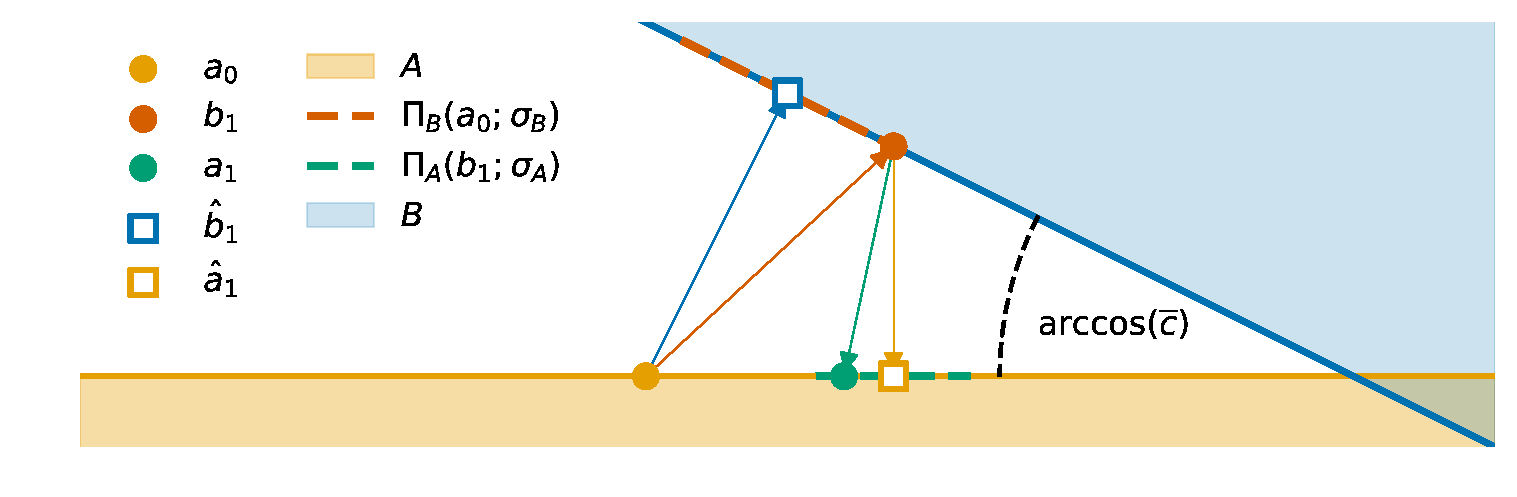}
\caption{The illustration of the settings of Lemma~\ref{lemma:1step_2qoptimal}.}
\label{fig:2qomp}
\end{figure}

\begin{lemma}
\label{lemma:1step_2qoptimal}
Let $A, B \subseteq H$ be non-empty and intersect at $\bar{x} \in A \cap B$, where
\begin{itemize}
    \item $A$ and $B$ are locally closed and intersect transversally with $\cos(A,B,\bar{x}) = \bar{c} < 1$, and
    \item $A$ and $B$ are super-regular.
\end{itemize}
Pick $\varepsilon > 0$ and $c \in (\bar{c}, 1)$. There exists $\delta > 0$ such that for every $\sigma_A \geq 1$ and $\sigma_B \geq 1$, each triplet of points $a_0 \in A \cap \ballrx{\delta}{\bar{x}}$ with $\Proj{B}{a_0} \cap \ballrx{\delta}{\bar{x}} \neq \emptyset$, $b_1 \in \Proj{B}{a_0; \sigma_B} \cap \ballrx{\delta}{\bar{x}}$ with $\Proj{A}{b_1} \cap \ballrx{\delta}{\bar{x}} \neq \emptyset$, and $a_1 \in \Proj{A}{b_1; \sigma_A}$ satisfies
\begin{gather*}
    \norm{a_1 - b_1} \leq \sigma_{A} f(\varepsilon, c, \sigma_B)  \cdot \norm{b_1 - a_0},\\
    f(\varepsilon, c, \sigma_B) = 2\varepsilon + c \left(\tfrac{1}{\sigma_B} - \varepsilon(1 + \varepsilon) \right) + \sqrt{1 - c^2} \sqrt{1 - \left(\tfrac{1}{\sigma_B} - \varepsilon(1 + \varepsilon) \right)^2}.    
\end{gather*}
If $a_0 \in A$ is such that $\norm{a_0 - \bar{x}} < \tfrac{\delta}{2 (\sigma_B + 1)}$ then $b_1$ and $a_1$ are well-defined, and the estimate holds for every $b_1 \in \Proj{B}{a_0}$.
\end{lemma}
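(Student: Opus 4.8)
The plan is to mirror the proof of Lemma~\ref{lemma:1step_1qoptimal} almost verbatim, the one structural change being that the single use of transversality there — which exploited $a_0 - b_1 \in \NormalLim{B}{b_1}$ and is unavailable now since $b_1$ need not be an optimal projection — is replaced by transversality \emph{combined with} the Pythagorean property of $B$ (Corollary~\ref{corollary:basp_superregular_qomp}). Fix $\varepsilon$ and $c$. I would take $\delta > 0$ small enough that simultaneously: $A \cap \ballrx{\delta}{\overline{x}}$ and $B \cap \ballrx{\delta}{\overline{x}}$ are closed in $\ballrx{\delta}{\overline{x}}$; the super-regularity inequality \eqref{eq:sr_normal_angle} holds with parameter $\varepsilon$ on $\ballrx{\delta}{\overline{x}}$ for both $A$ and $B$; the conclusion of Lemma~\ref{lemma:locally_transversal} holds on $\ballrx{\delta}{\overline{x}}$ with constant $c$; and the estimate of Corollary~\ref{corollary:basp_superregular_qomp} for $B$ is available for every $x \in \ballrx{\delta}{\overline{x}}$ (this forces the radius in that corollary to be taken twice as large, a harmless adjustment). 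I would then dispose of degenerate configurations: if $a_0 \in B$ then $b_1 = a_0$ and $a_1 = a_0$; if $b_1 \in A$ then $a_1 = b_1$; so one may assume $\norm{b_1 - a_0} > 0$, $\dist{b_1}{A} > 0$ and $\dist{a_0}{B} > 0$.

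Choose $\hat{a}_1 \in \Proj{A}{b_1} \cap \ballrx{\delta}{\overline{x}}$ and $\hat{b}_1 \in \Proj{B}{a_0} \cap \ballrx{\delta}{\overline{x}}$; both exist by hypothesis. Since $\norm{a_1 - b_1} \leq \sigma_A \norm{\hat{a}_1 - b_1}$, it suffices to prove $\norm{\hat{a}_1 - b_1} \leq f(\varepsilon, c, \sigma_B) \norm{b_1 - a_0}$. Following the proof of Lemma~\ref{lemma:basp_superregular}, expand
\begin{equation*}
\norm{\hat{a}_1 - b_1}^2 = \dotp{b_1 - \hat{a}_1}{a_0 - \hat{a}_1} + \dotp{\hat{a}_1 - b_1}{a_0 - b_1}.
\end{equation*}
For the first term, $a_0, \hat{a}_1 \in A \cap \ballrx{\delta}{\overline{x}}$ and $b_1 - \hat{a}_1 \in \NormalProx{A}{\hat{a}_1} \subseteq \NormalLim{A}{\hat{a}_1}$, so super-regularity of $A$ together with $\norm{\hat{a}_1 - b_1} = \dist{b_1}{A} \leq \norm{a_0 - b_1}$ gives $\dotp{b_1 - \hat{a}_1}{a_0 - \hat{a}_1} \leq \varepsilon \norm{b_1 - \hat{a}_1}\norm{a_0 - \hat{a}_1} \leq 2\varepsilon \norm{b_1 - \hat{a}_1}\norm{a_0 - b_1}$.

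The second term is the crux. Introduce the unit vectors $u = (\hat{a}_1 - b_1)/\norm{\hat{a}_1 - b_1}$, $v = (a_0 - \hat{b}_1)/\norm{a_0 - \hat{b}_1}$ and $w = (a_0 - b_1)/\norm{a_0 - b_1}$. Here $u \in -\NormalLim{A}{\hat{a}_1} \cap \overline{\ball}$ and $v \in \NormalLim{B}{\hat{b}_1} \cap \overline{\ball}$ are normalised proximal normals at points of $A$ and $B$ in $\ballrx{\delta}{\overline{x}}$, so Lemma~\ref{lemma:locally_transversal} gives $\dotp{u}{v} \leq c$; and Corollary~\ref{corollary:basp_superregular_qomp} applied to $B$ with $\pi = \hat{b}_1 \in \Proj{B}{a_0}$ and $\pi_\sigma = b_1 \in \Proj{B}{a_0; \sigma_B}$ gives $\dotp{v}{w} \geq \tfrac{1}{\sigma_B} - \varepsilon(1+\varepsilon) =: \beta$. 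Splitting $u$ and $w$ into components along $v$ and orthogonal to $v$ and using Cauchy--Schwarz,
\begin{equation*}
\dotp{u}{w} \leq \dotp{u}{v}\dotp{v}{w} + \sqrt{1 - \dotp{u}{v}^2}\sqrt{1 - \dotp{v}{w}^2} \leq c\beta + \sqrt{1-c^2}\sqrt{1-\beta^2},
\end{equation*}
the last inequality being the monotonicity of $(s,t) \mapsto st + \sqrt{1-s^2}\sqrt{1-t^2} = \cos(\arccos s - \arccos t)$ on $s \leq c$, $t \geq \beta$. Multiplying by $\norm{\hat{a}_1 - b_1}\norm{a_0 - b_1}$, adding to the first-term bound, and dividing by $\norm{\hat{a}_1 - b_1}$ gives $\norm{\hat{a}_1 - b_1} \leq \big(2\varepsilon + c\beta + \sqrt{1-c^2}\sqrt{1-\beta^2}\big)\norm{a_0 - b_1} = f(\varepsilon, c, \sigma_B)\norm{a_0 - b_1}$, whence the claim.

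For the well-definedness assertion, assume $\norm{a_0 - \overline{x}} < \tfrac{\delta}{2(\sigma_B+1)}$. Lemma~\ref{lemma:locally_closed_local_proj} makes $\Proj{B}{a_0}$ non-empty with all its points within $2\norm{a_0 - \overline{x}} < \delta$ of $\overline{x}$; any $b_1 \in \Proj{B}{a_0; \sigma_B}$ obeys $\norm{b_1 - \overline{x}} \leq \norm{b_1 - a_0} + \norm{a_0 - \overline{x}} \leq (\sigma_B+1)\norm{a_0 - \overline{x}} < \tfrac{\delta}{2}$, and Lemma~\ref{lemma:locally_closed_local_proj} applied again makes $\Proj{A}{b_1}$ non-empty with all its points within $2\norm{b_1 - \overline{x}} < \delta$ of $\overline{x}$, so the three non-emptiness hypotheses become automatic. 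The step I expect to be the main obstacle is the angular combination: squeezing out exactly the factor $f(\varepsilon, c, \sigma_B)$ rather than a cruder bound requires the precise monotonicity/range analysis of $st + \sqrt{1-s^2}\sqrt{1-t^2}$, and one has to keep track of the range of $\sigma_B$ in which $\beta$ stays in the regime where this quantity is controlled by its value at $(c,\beta)$ — which is the same regime the subsequent convergence Theorem~\ref{theorem:2qoptimal_convergence} operates in.
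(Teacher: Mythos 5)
Your proposal is correct and follows essentially the same route as the paper's proof: the same squared-norm expansion, the same super-regularity bound on the first term, and your orthogonal-decomposition inequality $\dotp{u}{w} \le \dotp{u}{v}\dotp{v}{w} + \sqrt{1-\dotp{u}{v}^2}\sqrt{1-\dotp{v}{w}^2}$ is exactly the paper's spherical triangle inequality $\cos\gamma \le \cos(\alpha-\beta)$ in algebraic form. The regime restriction you flag for the monotonicity step (needing $\tfrac{1}{\sigma_B}-\varepsilon(1+\varepsilon) > c$) is equally implicit in the paper's own argument and is absorbed by the hypotheses of Theorem~\ref{theorem:2qoptimal_convergence}, so it is not a gap you have introduced.
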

\begin{proof}
For fixed $\varepsilon$ and $c$, we select $\delta$ just as in Corollary~\ref{corollary:1qoptimal_convergence_2sr} to ensure that super-regular properties of $A$ and $B$ together with Lemma~\ref{lemma:locally_transversal} hold in $\ballrx{\delta}{\bar{x}}$. We then follow the proof of Lemma~\ref{lemma:1step_1qoptimal} with some modifications. If $a_1 = b_1$ or $b_1 = a_0$ then the proof is complete. Otherwise, take an optimal metric projection $\hat{a}_1 \in \Proj{A}{b_1} \cap \ballrx{\delta}{\bar{x}}$, get the inequality $\norm{a_1 - b_1} \leq \sigma_A \cdot \norm{\hat{a}_1 - b_1}$ and expand the squared norm
\begin{equation*}
    \norm{\hat{a}_1 - b_1}^2 = \dotp{b_1 - \hat{a}_1}{a_0 - \hat{a}_1} + \dotp{\hat{a}_1 - b_1}{a_0 - b_1}.
\end{equation*}
The first term is bounded like in Lemma~\ref{lemma:1step_1qoptimal} based on the super-regularity of $A$:
\begin{equation*}
    \dotp{b_1 - \hat{a}_1}{a_0 - \hat{a}_1} \leq 2\varepsilon \norm{b_1 - \hat{a}_1} \cdot \norm{a_0 - b_1}.
\end{equation*}
In the second term, however, we can no longer guarantee that $a_0 - b_1 \in \NormalLim{B}{b_1}$ since $b_1$ is a quasioptimal projection. Let $\hat{b}_1 \in \Proj{B}{a_0} \cap \ballrx{\delta}{\overline{x}}$; then by Lemma~\ref{lemma:locally_transversal}, 
\begin{equation*}
    \dotp{\hat{a}_1 - b_1}{a_0 - \hat{b}_1} \leq c\norm{\hat{a}_1 - b_1} \cdot \norm{a_0 - \hat{b}_1},
\end{equation*}
since $\hat{a}_1 \neq \hat{b}_1$ and $a_0 \neq \hat{b}_1$. By virtue of Lemma~\ref{lemma:basp_superregular}, we have
\begin{equation*}
    \dotp{a_0 - b_1}{a_0 - \hat{b}_1} \geq \left( \tfrac{1}{\sigma_B} - \varepsilon(1 + \varepsilon) \right) \norm{a_0 - b_1} \norm{a_0 - \hat{b}_1}.
\end{equation*}
Consider three angles: $\alpha$ between $\hat{a}_1 - b_1$ and $a_0 - \hat{b}_1$, $\beta$ between $a_0 - b_1$ and $a_0 - \hat{b}_1$, and $\gamma$ between $\hat{a}_1 - b_1$ and $a_0 - b_1$.
The triangle inequality on the sphere gives $\gamma \geq \alpha - \beta$, so
\begin{equation*}
    \cos(\gamma) \leq \cos(\alpha - \beta) \leq \cos\left( \arccos(c) - \arccos\left( \tfrac{1}{\sigma_B} - \varepsilon(1 + \varepsilon) \right) \right)
\end{equation*}
and
\begin{equation*}
    \dotp{\hat{a}_1 - b_1}{a_0 - b_1} \leq (f(c, \varepsilon, \sigma_B) - 2\varepsilon) \cdot \norm{b_1 - \hat{a}_1} \norm{a_0 - b_1}.
\end{equation*}

Let us show that the points $b_1$ and $a_1$ are well-defined when $\norm{a_0 - \bar{x}} < \tfrac{\delta}{2 (\sigma_B + 1)}$. By Lemma~\ref{lemma:locally_closed_local_proj}, $\Proj{B}{a_0; \sigma_B} \neq \emptyset$ for all $\sigma_B \geq 1$ and $\norm{b_1 - \bar{x}} < \delta/2$ for every $b_1 \in \Proj{B}{a_0; \sigma_B}$. Then Lemma~\ref{lemma:locally_closed_local_proj} gives that $\Proj{A}{b_1; \sigma_A} \neq \emptyset$ for all $\sigma_A \geq 1$ and $\Proj{A}{b_1} \subseteq \ballrx{\delta}{\bar{x}}$.
\end{proof}

\begin{theorem}
\label{theorem:2qoptimal_convergence}
Let $A, B \subseteq H$ be non-empty and intersect at $\bar{x} \in A \cap B$, where
\begin{itemize}
    \item $A$ and $B$ are locally closed and intersect transversally with $\cos(A,B,\bar{x}) = \bar{c} < 1$, and
    \item $A$ and $B$ are super-regular.
\end{itemize}
For every $c \in (\bar{c}, 1)$ and every $\sigma_A, \sigma_B \in (1, \frac{1}{c})$ define $\kappa_{A}$ and $\kappa_{B}$ as
\begin{equation*}
    \kappa_{A} = \frac{\sigma_A}{\sigma_B} \left[ c + \sqrt{1 - c^2} \sqrt{\sigma_B^2 - 1} \right], \quad \kappa_{B} = \frac{\sigma_B}{\sigma_A} \left[ c + \sqrt{1 - c^2} \sqrt{\sigma_A^2 - 1} \right].
\end{equation*}
If $\kappa_{A} \cdot \kappa_{B} < 1$ then there exists $\delta > 0$ such that for every $a_0 \in A$ satisfying 
\begin{equation*}
    \norm{a_0 - \bar{x}} < \tfrac{1 - \kappa_A \kappa_B}{2(\sigma_B + 1)[\sigma_B + 1 + \kappa_A(\sigma_B-\kappa_B)]} \delta,
\end{equation*}
the QAP \eqref{eq:qap} converge to $\hat{x} \in A \cap B$ such that $\norm{\hat{x} - \bar{x}} < \tfrac{3}{8} \delta$ and, for $n \in \N_0$,
\begin{gather*}
    \norm{\hat{x} - a_n} \leq \tfrac{\sigma_B (1 + \kappa_{A})}{1 - \kappa_{A} \kappa_{B}} (\kappa_{A} \kappa_{B})^n \cdot \dist{a_0}{B},\\
    \norm{\hat{x} - b_{n+1}} \leq \tfrac{\sigma_B(1 + \kappa_{B}) \kappa_{A}}{1 - \kappa_{A} \kappa_{B}} (\kappa_{A} \kappa_{B})^n \cdot \dist{a_0}{B}.
\end{gather*}
\end{theorem}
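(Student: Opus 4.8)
The plan is to deduce the theorem from the one-step estimate of Lemma~\ref{lemma:1step_2qoptimal} and the abstract convergence Lemma~\ref{lemma:convergence_under_the_hood}, exactly as Theorem~\ref{theorem:1qoptimal_convergence} and Corollary~\ref{corollary:1qoptimal_convergence_2sr} were obtained from Lemma~\ref{lemma:1step_1qoptimal}; only the modifications need to be spelled out. Since now \emph{both} projections are quasioptimal, I would invoke Lemma~\ref{lemma:1step_2qoptimal} in two orientations. In the orientation as stated it controls the step $a_n \to b_{n+1} \to a_{n+1}$ and gives $\norm{a_{n+1} - b_{n+1}} \leq \sigma_A f(\varepsilon, c', \sigma_B) \cdot \norm{b_{n+1} - a_n}$; after interchanging the roles of $A$ and $B$ — which is legitimate because both sets are super-regular and $\cos(A,B,\overline{x}) = \cos(B,A,\overline{x})$, so the hypotheses of the lemma are symmetric — it controls the step $b_n \to a_n \to b_{n+1}$ and gives $\norm{b_{n+1} - a_n} \leq \sigma_B f(\varepsilon, c', \sigma_A) \cdot \norm{a_n - b_n}$. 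Writing $\kappa_A^\varepsilon = \sigma_A f(\varepsilon, c', \sigma_B)$ and $\kappa_B^\varepsilon = \sigma_B f(\varepsilon, c', \sigma_A)$, these are precisely the hypotheses of Lemma~\ref{lemma:convergence_under_the_hood} (for $n \geq 1$), as long as all iterates lie in the relevant ball around $\overline{x}$.

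The next step is to pick the auxiliary parameters $c' \in (\overline{c}, c)$ and $\varepsilon > 0$ so that $\kappa_A^\varepsilon \leq \kappa_A$ and $\kappa_B^\varepsilon \leq \kappa_B$. This is where the hypothesis $\sigma_A, \sigma_B \in (1, 1/c)$ enters: as $\varepsilon \downarrow 0$ one has $f(\varepsilon, c', \sigma_B) \to \tfrac{1}{\sigma_B}\bigl[c' + \sqrt{1 - c'^2}\sqrt{\sigma_B^2 - 1}\bigr]$, and the map $s \mapsto s + \sqrt{1 - s^2}\sqrt{\sigma_B^2 - 1}$ is strictly increasing on $(0, 1/\sigma_B)$, an interval that contains $[c', c]$ precisely because $\sigma_B < 1/c$; hence $\sigma_A f(0, c', \sigma_B) < \kappa_A$, and by continuity of $f$ in $\varepsilon$ we get $\kappa_A^\varepsilon < \kappa_A$ for $\varepsilon$ small, and symmetrically $\kappa_B^\varepsilon < \kappa_B$. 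In particular $\kappa_A^\varepsilon \kappa_B^\varepsilon < \kappa_A \kappa_B < 1$, so Lemma~\ref{lemma:convergence_under_the_hood} applies with the constants $\kappa_A^\varepsilon, \kappa_B^\varepsilon$ and delivers a common limit $\hat{x} \in A \cap B$ with geometric rates in terms of $\norm{b_1 - a_0}$. The stated bounds then follow from two elementary replacements: $\norm{b_1 - a_0} \leq \sigma_B \dist{a_0}{B}$ because $b_1 \in \Proj{B}{a_0; \sigma_B}$ — this is the origin of the $\sigma_B$ prefactor — and the constants $\tfrac{1 + \kappa_A^\varepsilon}{1 - \kappa_A^\varepsilon \kappa_B^\varepsilon}$, $\tfrac{(1 + \kappa_B^\varepsilon)\kappa_A^\varepsilon}{1 - \kappa_A^\varepsilon \kappa_B^\varepsilon}$ and $(\kappa_A^\varepsilon\kappa_B^\varepsilon)^n$ are monotone in $\kappa_A^\varepsilon \le \kappa_A$, $\kappa_B^\varepsilon \le \kappa_B$, hence dominated by the versions with $\kappa_A, \kappa_B$.

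What remains — the only technical, non-conceptual point — is the bootstrap keeping all iterates inside the ball in which Lemma~\ref{lemma:1step_2qoptimal} is valid, mirroring the induction at the end of the proofs of Theorem~\ref{theorem:1qoptimal_convergence} and Corollary~\ref{corollary:1qoptimal_convergence_2sr}. Taking $\delta$ to be the smaller of the two radii supplied by Lemma~\ref{lemma:1step_2qoptimal} in the two orientations for the chosen $c', \varepsilon$, one proves by induction on $n$ that $\norm{a_n - \overline{x}} < \tfrac{\delta}{2(\sigma_B + 1)}$, which is exactly the threshold that makes Lemma~\ref{lemma:1step_2qoptimal} produce $b_{n+1}, a_{n+1}$ and return them to $\ballrx{\delta}{\overline{x}}$ (and keeps the $b_n$ in $\ballrx{\delta}{\overline{x}}$ as well). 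The inductive step uses the triangle inequality together with the geometric decay $\norm{b_{k+1} - a_k} \le (\kappa_A^\varepsilon\kappa_B^\varepsilon)^k \norm{b_1 - a_0}$, $\norm{a_{k+1} - b_{k+1}} \le \kappa_A^\varepsilon(\kappa_A^\varepsilon\kappa_B^\varepsilon)^k\norm{b_1 - a_0}$ established inside Lemma~\ref{lemma:convergence_under_the_hood}; summing one geometric series and using $\norm{b_1 - a_0} \le \sigma_B\norm{a_0 - \overline{x}}$ gives $\norm{a_n - \overline{x}} < \norm{a_0 - \overline{x}}\bigl(1 + \sigma_B\tfrac{1 + \kappa_A^\varepsilon}{1 - \kappa_A^\varepsilon \kappa_B^\varepsilon}\bigr)$, and via the identity $1 - \kappa_A^\varepsilon\kappa_B^\varepsilon + \sigma_B(1 + \kappa_A^\varepsilon) = (\sigma_B + 1) + \kappa_A^\varepsilon(\sigma_B - \kappa_B^\varepsilon)$ this stays below $\tfrac{\delta}{2(\sigma_B+1)}$ once $\norm{a_0 - \overline{x}}$ is below the displayed threshold — with $\kappa_A^\varepsilon, \kappa_B^\varepsilon$ in place of $\kappa_A, \kappa_B$; a harmless further shrinking of $\delta$ lets one state the threshold with the clean constants. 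The same supremum bound gives $\norm{\hat{x} - \overline{x}} < \tfrac{\delta}{2(\sigma_B+1)} < \tfrac{3}{8}\delta$ using $\sigma_B > 1$. The main obstacle is therefore purely bookkeeping: making the interleaved geometric series telescope with exactly the coefficients that yield the stated constant $\tfrac{1 - \kappa_A\kappa_B}{2(\sigma_B+1)[\sigma_B + 1 + \kappa_A(\sigma_B - \kappa_B)]}$, and verifying the monotonicity facts (notably $\kappa_B < \sigma_B$, which follows from $\sigma_A < 1/c$) that justify replacing the $\varepsilon$-perturbed quantities by those in the statement.
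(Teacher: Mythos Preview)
Your proposal is correct and follows the same overall architecture as the paper: invoke Lemma~\ref{lemma:1step_2qoptimal} in both orientations, feed the resulting step bounds into Lemma~\ref{lemma:convergence_under_the_hood}, and run the same inductive bootstrap as in Theorem~\ref{theorem:1qoptimal_convergence} to keep the iterates in $\ballrx{\delta/(2(\sigma_B+1))}{\overline{x}}$; the factor $\sigma_B$ and the identity $1 - \kappa_A\kappa_B + \sigma_B(1+\kappa_A) = (\sigma_B+1) + \kappa_A(\sigma_B-\kappa_B)$ are exactly what the paper uses.

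The one genuine difference is how you select the auxiliary parameters. The paper picks $c'$ close to $c$ and then uses the implicit function theorem (Appendix~\ref{appendix:implicit_function_theorem}) to find \emph{two} values $\varepsilon_A,\varepsilon_B>0$ with $\sigma_A f(\varepsilon_A,c',\sigma_B)=\kappa_A$ and $\sigma_B f(\varepsilon_B,c',\sigma_A)=\kappa_B$ \emph{exactly}, so that the step bounds come out directly with the constants $\kappa_A,\kappa_B$ of the statement. You instead take a single small $\varepsilon$, obtain strict inequalities $\kappa_A^\varepsilon<\kappa_A$, $\kappa_B^\varepsilon<\kappa_B$ via the monotonicity of $s\mapsto s+\sqrt{1-s^2}\sqrt{\sigma^2-1}$ on $(0,1/\sigma)$, and then upgrade to $\kappa_A,\kappa_B$ by monotonicity of the final expressions. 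This is more elementary and perfectly valid; in fact the ``harmless shrinking of $\delta$'' you mention is not even needed, since the threshold $\tfrac{1-\kappa_A\kappa_B}{2(\sigma_B+1)[\sigma_B+1+\kappa_A(\sigma_B-\kappa_B)]}$ is monotone decreasing in each of $\kappa_A,\kappa_B$ (a short computation), so the stated hypothesis on $\norm{a_0-\overline{x}}$ already implies the one you derive with $\kappa_A^\varepsilon,\kappa_B^\varepsilon$. The paper's IFT route buys exact constants throughout the induction; your route buys a shorter, calculus-free argument at the cost of one extra monotonicity check.
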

\begin{proof}
Similar to Theorem~\ref{theorem:1qoptimal_convergence}, we want to choose $c' \in (\bar{c}, c)$, pick some $\varepsilon > 0$ and infer the corresponding value of $\delta$. Since the scaling factor in Lemma~\ref{lemma:1step_2qoptimal} is more complicated than in Lemma~\ref{lemma:1step_1qoptimal}, the implicit function theorem is required. We only outline the argument here, and the details can be found in Appendix~\ref{appendix:implicit_function_theorem}. 

The function $f$ from Lemma~\ref{lemma:1step_2qoptimal} satisfies $f(0, c, \sigma_A) = \frac{\kappa_{B}}{\sigma_B}$ and $f(0, c, \sigma_B) = \frac{\kappa_{A}}{\sigma_A}$. Provided that $c'$ is sufficiently close to $c$, the implicit function theorem guarantees that we can find $\varepsilon_{A}$ and $\varepsilon_{B}$ such that $f(\varepsilon_{B}, c', \sigma_A) = \frac{\kappa_{B}}{\sigma_B}$ and $f(\varepsilon_{A}, c', \sigma_B) = \frac{\kappa_{A}}{\sigma_A}$. In addition, $\varepsilon_A$ and $\varepsilon_B$ are positive when $\sigma_A < 1/c$ and $\sigma_B < 1/c$, which is assumed. 

We can then take $\varepsilon_A$ and choose $\delta_A > 0$ based on Lemma~\ref{lemma:1step_2qoptimal}. On reversing the roles of $A$ and $B$ in Lemma~\ref{lemma:1step_2qoptimal}, we also get $\delta_B > 0$ that corresponds to $\varepsilon_B$. Finally, we set $\delta = \min\{ \delta_{A}, \delta_{B} \}$. The rest is similar to the proof of Theorem~\ref{theorem:1qoptimal_convergence}. Thanks to Lemma~\ref{lemma:1step_2qoptimal}, the steps satisfy
\begin{equation*}
    \norm{a_n - b_n} \leq \kappa_A \norm{b_n - a_{n-1}}, \quad \norm{b_{n+1} - a_n} \leq \kappa_B \norm{a_n - b_n}, \quad n \in \N,
\end{equation*}
when $\norm{a_{n} - \bar{x}} < \tfrac{\delta}{2 (\sigma_B + 1)}$. This, in turn, follows from an inductive proof of
\begin{equation*}
    \norm{a_{n} - \bar{x}} \leq \tfrac{\norm{a_0 - \bar{x}}}{1 - \kappa_{A} \kappa_{B}} \left[ 1 + \sigma_B + \kappa_{A}(\sigma_B - \kappa_{B}) - \sigma_B (\kappa_{A} \kappa_{B})^{n} (1 + \kappa_{A}) \right].
\end{equation*}
We should also note that $\norm{b_1 - a_0} \leq \sigma_B \cdot \dist{a_0}{B}$.
\end{proof}

\subsection{Alternating projections as a quasioptimal metric projection}
\label{subsec:ap_as_qomp}
Let us consider the general implications of the theory we have developed by focusing on the similarities of our theorems. We have shown that when (i)~the two sets $A$ and $B$ are super-regular at some $\bar{x} \in A \cap B$, (ii)~we can compute sufficiently accurate metric projections, and (iii)~the starting point $a_0 \in A$ is sufficiently close to $\bar{x}$, the QAP converge to a point $\hat{x} \in A \cap B$, which is also close to $\bar{x}$, at a rate
\begin{equation*}
    \norm{\hat{x} - a_n} \leq \sigma_{AB} q^n \cdot \dist{a_0}{B}
\end{equation*}
for some $\sigma_{AB} \geq 1$ and $q \in (0, 1)$. For instance, $\norm{\hat{x} - a_0} \leq \sigma_{AB} \cdot \dist{a_0}{B}$. The trivial bound $\dist{a_0}{B} \leq \dist{a_0}{A \cap B}$ leads us to the observation that the AP \emph{act as a quasioptimal metric projection} of $a_0$ onto $B$ and $A \cap B$:
\begin{equation*}
    \hat{x} \in \Proj{B}{a_0; \sigma_{AB}} \cap \Proj{A \cap B}{a_0; \sigma_{AB}}.
\end{equation*}

Since both $A$ and $B$ are locally closed at $\bar{x}$, so is their intersection $A \cap B$. Then, by Lemma~\ref{lemma:locally_closed_local_proj}, there is a neighbourhood $\mathcal{V}$ of $\bar{x}$ such that every $x \in \mathcal{V}$ has $\Proj{A \cap B}{x} \neq \emptyset$. Even though it might be impossible to compute the optimal metric projection, our convergence theorems guarantee that there is a smaller neighbourhood $\mathcal{U} \subseteq \mathcal{V}$ such that we can construct a quasioptimal metric projection for every $x \in \mathcal{U}$. It suffices to apply the QAP to $a_0 \in \Proj{A}{x; \sigma_A}$ to obtain $\hat{x} \in A \cap B$ such that 
\begin{align*}
    \norm{\hat{x} - x} &\leq \norm{\hat{x} - a_0} + \norm{a_0 - x} \leq \sigma_{AB} \cdot \dist{a_0}{B} + \norm{a_0 - x} \\
    &\leq \sigma_{AB} \cdot \dist{a_0}{\Proj{B}{x}} + \norm{a_0 - x} \\
    &\leq \sigma_{AB} (\norm{a_0 - x} + \dist{x}{B}) + \norm{a_0 - x} \\
    &\leq \sigma_{AB} \cdot \dist{x}{B} + \sigma_A (\sigma_{AB} + 1) \cdot \dist{x}{A}.
\end{align*}
Then $\hat{x} \in \Proj{A \cap B}{x; \sigma_{AB} + \sigma_A (\sigma_{AB} + 1)}$, or $\hat{x} \in \Proj{B}{a_0; \sigma_{AB}} \cap \Proj{A \cap B}{x; \sigma_{AB}}$ if $x \in A$.

When $B$ is super-regular, it becomes possible to measure the distance between the limit $\hat{x}$ of the QAP and the metric projection $\Proj{B}{a_0}$ of the initial condition.

\begin{theorem}
\label{theorem:ap_as_qmp}
Let $A, B \subseteq H$ be non-empty and intersect at $\bar{x} \in A \cap B$, where
\begin{itemize}
    \item $A$ and $B$ are locally closed and intersect transversally with $\cos(A,B,\bar{x}) = \bar{c} < 1$, and
    \item $A$ and $B$ are super-regular.
\end{itemize}
For every $c \in (\bar{c}, 1)$ and $\sigma_A, \sigma_B \geq 1$, define $\kappa_A$ and $\kappa_B$ as
\begin{equation*}
    \kappa_A = \sigma_A \cdot c, \quad \kappa_B = c
\end{equation*}
when $\sigma_A \geq 1$ and $\sigma_B = 1$, or as
\begin{equation*}
    \kappa_{A} = \frac{\sigma_A}{\sigma_B} \left[ c + \sqrt{1 - c^2} \sqrt{\sigma_B^2 - 1} \right], \quad \kappa_{B} = \frac{\sigma_B}{\sigma_A} \left[ c + \sqrt{1 - c^2} \sqrt{\sigma_A^2 - 1} \right]
\end{equation*}
when $\sigma_A, \sigma_B \in (1, \tfrac{1}{c})$. Assume that $\kappa_{A} \cdot \kappa_{B} < 1$ and let $\sigma_{AB} > \frac{1 + \kappa_{A}}{1 - \kappa_{A} \kappa_{B}} \sigma_B$. Then for each $a_0 \in A$ that is sufficiently close to $\bar{x}$, the QAP \eqref{eq:qap} converge to $\hat{x} \in A \cap B$ such that
\begin{equation*}
    \dist{\hat{x}}{\Proj{B}{a_0}} \leq \sqrt{\sigma_{AB}^2 - 1} \cdot \dist{a_0}{B}.
\end{equation*}
\end{theorem}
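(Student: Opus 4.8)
The plan is to read the desired bound as a statement that the limit $\hat{x}$ lies close to the \emph{optimal} metric projection of $a_0$ onto the super-regular set $B$, and to obtain it by feeding the $n=0$ distance estimate of the relevant convergence result into the Pythagorean-type bound of Corollary~\ref{corollary:qmp_close_to_omp}. Write $M = \tfrac{\sigma_B(1+\kappa_A)}{1-\kappa_A\kappa_B}$. When $\sigma_B = 1$ this $M$ equals the constant $\tfrac{1+\kappa_A}{1-\kappa_A\kappa_B}$ that appears in Corollary~\ref{corollary:1qoptimal_convergence_2sr} at $n = 0$, and when $\sigma_A,\sigma_B \in (1,\tfrac1c)$ it is the constant in Theorem~\ref{theorem:2qoptimal_convergence} at $n = 0$; in both cases the standing hypothesis $\kappa_A\kappa_B < 1$ is exactly what is needed for that convergence result to apply, and it yields in particular $\norm{\hat{x} - a_0} \le M \cdot \dist{a_0}{B}$. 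Since $\hat{x} \in B$, this means $\hat{x} \in \Proj{B}{a_0; M}$. Note that $M > 1$, and that the hypothesis $\sigma_{AB} > \tfrac{1+\kappa_A}{1-\kappa_A\kappa_B}\sigma_B$ reads precisely $M < \sigma_{AB}$, so $\sqrt{M^2 - 1} < \sqrt{\sigma_{AB}^2 - 1}$.

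First I would use this strict inequality to fix $\varepsilon > 0$ so small that $\varepsilon + \sqrt{M^2 - 1 + \varepsilon^2} \le \sqrt{\sigma_{AB}^2 - 1}$, which is possible because the left-hand side decreases to $\sqrt{M^2-1}$ as $\varepsilon$ tends to $0$ from above. Next, apply Corollary~\ref{corollary:qmp_close_to_omp} to the set $B$ (super-regular at $\overline{x}$) with this $\varepsilon$ to obtain a radius, and then shrink $\delta$ so that it lies below this radius and below the $\delta$ furnished by the appropriate convergence result; since all the conclusions involved persist on smaller balls, this causes no difficulty. For $a_0 \in A$ with $\norm{a_0 - \overline{x}}$ smaller than both the explicit bound in the convergence result and $\delta/2$, the quasioptimal alternating projections converge to $\hat{x} \in A \cap B$ with $\norm{\hat{x} - \overline{x}} < \tfrac{3}{8}\delta < \delta$, so $\hat{x} \in \Proj{B}{a_0; M} \cap \ballrx{\delta}{\overline{x}}$; moreover $\Proj{B}{a_0} \ne \emptyset$ by Lemma~\ref{lemma:locally_closed_local_proj}.

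It then remains to invoke Corollary~\ref{corollary:qmp_close_to_omp} with $x = a_0$, $\sigma = M$ and $\pi_\sigma = \hat{x}$: for every $\pi \in \Proj{B}{a_0}$ it gives $\norm{\hat{x} - \pi} \le (\varepsilon + \sqrt{M^2 - 1 + \varepsilon^2}) \cdot \dist{a_0}{B} \le \sqrt{\sigma_{AB}^2 - 1} \cdot \dist{a_0}{B}$, and passing to the infimum over $\pi \in \Proj{B}{a_0}$ yields the claim. The only delicate point is the bookkeeping of the several radii — arranging that the neighbourhood of $\overline{x}$ is small enough for the convergence estimate, the containment $\hat{x} \in \ballrx{\delta}{\overline{x}}$, the non-emptiness of $\Proj{B}{a_0}$, and the hypotheses of Corollary~\ref{corollary:qmp_close_to_omp} to hold at once — together with checking $1 < M < \sigma_{AB}$ so that the $\varepsilon$-slack in Corollary~\ref{corollary:qmp_close_to_omp} can be absorbed into the clean constant $\sqrt{\sigma_{AB}^2-1}$; everything else is a direct substitution.
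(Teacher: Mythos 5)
Your proposal is correct and follows essentially the same route as the paper: apply the $n=0$ estimate from Corollary~\ref{corollary:1qoptimal_convergence_2sr} or Theorem~\ref{theorem:2qoptimal_convergence} to place $\hat{x}$ in $\Proj{B}{a_0; M}$ with $M = \tfrac{1+\kappa_A}{1-\kappa_A\kappa_B}\sigma_B < \sigma_{AB}$, then absorb the $\varepsilon$-slack of Corollary~\ref{corollary:qmp_close_to_omp} into $\sqrt{\sigma_{AB}^2-1}$ and shrink the admissible neighbourhood of $\overline{x}$ so that all radii are compatible. The paper does exactly this, solving $\varepsilon + \sqrt{M^2-1+\varepsilon^2} = \sqrt{\sigma_{AB}^2-1}$ for the unique positive $\tilde{\varepsilon}$ where you merely pick $\varepsilon$ small enough; the difference is immaterial.
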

\begin{proof}
Depending on the values of $\sigma_A$ and $\sigma_B$, we can use Corollary~\ref{corollary:1qoptimal_convergence_2sr} or Theorem~\ref{theorem:2qoptimal_convergence} to show that there exists $\delta > 0$ such that the QAP started from $a_0 \in A$ with $\norm{a_0 - \bar{x}} \lesssim \delta$ converge to $\hat{x} \in A \cap B$, which satisfies
\begin{equation*}
    \norm{\hat{x} - a_0} \leq \tfrac{1 + \kappa_{A}}{1 - \kappa_{A} \kappa_{B}} \sigma_B \cdot \dist{a_0}{B} \quad \text{and} \quad \norm{\hat{x} - \bar{x}} < \tfrac{3}{8} \delta.
\end{equation*}
Next, consider a strictly increasing function
\begin{equation*}
    g : \varepsilon \mapsto \varepsilon + \sqrt{\left( \tfrac{1 + \kappa_{A}}{1 - \kappa_{A} \kappa_{B}} \sigma_B \right)^2 - 1 + \varepsilon^2}.
\end{equation*}
Since $\sqrt{\sigma^2_{AB} - 1} > g(0)$, there is a unique $\tilde{\varepsilon} > 0$ such that $g(\tilde{\varepsilon}) = \sqrt{\sigma^2_{AB} - 1}$. As $B$ is super-regular at $\bar{x}$, we can pick $\tilde{\delta} > 0$ corresponding to $\tilde{\varepsilon}$ based on Corollary~\ref{corollary:qmp_close_to_omp}. Then
\begin{equation*}
    \dist{\hat{x}}{\Proj{B}{a_0}} \leq \sqrt{\sigma_{AB}^2 - 1} \cdot \dist{a_0}{B}
\end{equation*}
provided that $a_0 \in \ballrx{\tilde{\delta}/2}{\bar{x}}$ and $\hat{x} \in \ballrx{\tilde{\delta}}{\bar{x}}$. This can be achieved by reducing $\delta$.
\end{proof}

The same argument can be used to estimate the distance to $\Proj{A \cap B}{a_0}$ if the intersection $A \cap B$ itself is super-regular at $\bar{x}$. The situation is common when the sets are convex, smooth manifolds \cite{lewis2008alternating}, or prox-regular \cite{adly2016preservation}. Similar estimates were proved in \cite{andersson2013alternating} for the AP on manifolds.

\begin{corollary}
\label{corollary:ap_as_qmp}
In the setting of Theorem~\ref{theorem:ap_as_qmp}, assume that $A \cap B$ is also super-regular at $\bar{x}$. Then the QAP \eqref{eq:qap} converge to $\hat{x} \in A \cap B$ such that
\begin{equation*}
    \dist{\hat{x}}{\Proj{A \cap B}{a_0}} \leq \sqrt{\sigma_{AB}^2 - 1} \cdot \dist{a_0}{A \cap B}.
\end{equation*}
\end{corollary}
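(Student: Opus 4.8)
The plan is to replay the proof of Theorem~\ref{theorem:ap_as_qmp} almost verbatim, with the single set $B$ replaced everywhere by the intersection $A \cap B$; the only extra ingredient required is exactly the new hypothesis that $A \cap B$ is super-regular at $\overline{x}$.

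First I would invoke Corollary~\ref{corollary:1qoptimal_convergence_2sr} (when $\sigma_B = 1$) or Theorem~\ref{theorem:2qoptimal_convergence} (when $\sigma_A, \sigma_B \in (1, 1/c)$), just as in Theorem~\ref{theorem:ap_as_qmp}, to obtain $\delta > 0$ such that every $a_0 \in A$ sufficiently close to $\overline{x}$ produces a sequence of quasioptimal alternating projections \eqref{eq:qap} converging to some $\hat{x} \in A \cap B$ with $\norm{\hat{x} - \overline{x}} < \tfrac{3}{8}\delta$ and $\norm{\hat{x} - a_0} \leq \rho \cdot \dist{a_0}{B}$, where $\rho = \tfrac{1 + \kappa_A}{1 - \kappa_A \kappa_B}\sigma_B$. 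Since $\dist{a_0}{B} \leq \dist{a_0}{A \cap B}$ and $\hat{x} \in A \cap B$, this already shows that $\hat{x} \in \Proj{A \cap B}{a_0; \rho}$, and $\rho < \sigma_{AB}$ by the hypothesis on $\sigma_{AB}$.

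Next I would introduce the strictly increasing function $g(\varepsilon) = \varepsilon + \sqrt{\rho^2 - 1 + \varepsilon^2}$, note that $g(0) = \sqrt{\rho^2 - 1} < \sqrt{\sigma_{AB}^2 - 1}$, take the unique $\tilde{\varepsilon} > 0$ with $g(\tilde{\varepsilon}) = \sqrt{\sigma_{AB}^2 - 1}$, and feed $\tilde{\varepsilon}$ into Corollary~\ref{corollary:qmp_close_to_omp} applied to the super-regular set $A \cap B$ at $\overline{x}$. This yields $\tilde{\delta} > 0$ such that, whenever $a_0 \in \ballrx{\tilde{\delta}/2}{\overline{x}}$ and $\hat{x} \in \Proj{A \cap B}{a_0; \rho} \cap \ballrx{\tilde{\delta}}{\overline{x}}$, every optimal metric projection $\pi \in \Proj{A \cap B}{a_0}$ (non-empty by Lemma~\ref{lemma:locally_closed_local_proj}, as $A \cap B$ is locally closed at $\overline{x}$) satisfies
\begin{equation*}
    \norm{\hat{x} - \pi} \leq \big( \tilde{\varepsilon} + \sqrt{\rho^2 - 1 + \tilde{\varepsilon}^2} \big) \cdot \dist{a_0}{A \cap B} = \sqrt{\sigma_{AB}^2 - 1} \cdot \dist{a_0}{A \cap B}.
\end{equation*}
Taking the infimum over $\pi$ gives the claimed estimate. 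To finish I would shrink $\delta$ so that $\norm{a_0 - \overline{x}}$ small forces both $a_0 \in \ballrx{\tilde{\delta}/2}{\overline{x}}$ and, via $\norm{\hat{x} - \overline{x}} < \tfrac{3}{8}\delta$, $\hat{x} \in \ballrx{\tilde{\delta}}{\overline{x}}$ — identical bookkeeping to Theorem~\ref{theorem:ap_as_qmp}.

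I do not anticipate a real obstacle, since the argument is a transcription of Theorem~\ref{theorem:ap_as_qmp} with $B$ replaced by $A \cap B$. The single point needing a little care is the membership $\hat{x} \in \Proj{A \cap B}{a_0; \rho}$: the convergence theorems bound $\norm{\hat{x} - a_0}$ only through $\dist{a_0}{B}$, and this gap is bridged precisely by the elementary inequality $\dist{a_0}{B} \leq \dist{a_0}{A \cap B}$. Everything else — the monotone function $g$, the choice of $\tilde{\varepsilon}$, and the final radius bookkeeping — carries over unchanged.
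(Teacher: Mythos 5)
Your proposal is correct and is exactly the argument the paper intends: the paper gives no separate proof of this corollary, stating only that "the same argument can be used" once $A \cap B$ is super-regular, and your write-up carries out that substitution of $A \cap B$ for $B$ in the proof of Theorem~\ref{theorem:ap_as_qmp}, correctly bridging the one nontrivial gap via $\dist{a_0}{B} \leq \dist{a_0}{A \cap B}$ to place $\hat{x}$ in $\Proj{A \cap B}{a_0; \rho}$ before invoking Corollary~\ref{corollary:qmp_close_to_omp}.
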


With $\sigma_B = 1$, our Corollary~\ref{corollary:ap_as_qmp} states that \begin{equation*}
    \dist{\hat{x}}{\Proj{A \cap B}{a_0}} \lesssim \frac{\sqrt{2 c \sigma_A} \sqrt{2 + c \sigma_A}}{1 - c^2 \sigma_A} \cdot \dist{a_0}{A \cap B}.
\end{equation*}
When $c \sigma_A \ll 1$, the QAP give a good approximation of the optimal metric projection onto $A \cap B$ in the sense that
\begin{equation*}
    \dist{\hat{x}}{\Proj{A \cap B}{a_0}} \lesssim 2 \sqrt{c \sigma_A} \cdot \dist{a_0}{A \cap B}.
\end{equation*}
\section{Low-rank approximation with constraints}
\label{sec:lowrank}
The QAP arise in the context of constrained low-rank approximation of matrices and tensors. We focus on two specific problems and treat them numerically\footnote{The code is available at \url{https://github.com/sbudzinskiy/LRAP}.} in Subsection~\ref{subsec:nonnegative} and Subsection~\ref{subsec:maximum}: imposing nonnegativity onto a low-rank approximation and computing low-rank approximations that are good in the maximum norm \eqref{eq:max_norm}. Before diving into the results of the experiments, we begin with an introduction to low-rank approximation (see also the supplementary material).

\subsection{Low-rank decompositions}
\label{subsec:lrm}
For a matrix $\Matrix{X} \in \Real^{m \times n}$, its rank is the number of linearly independent columns (or, equivalently, rows). If $\rank{\Matrix{X}} = r$, the matrix can be represented as a product
\begin{equation*}
    \Matrix{X} = \Matrix{U} \Matrix{V}^\intercal, \quad \Matrix{U} \in \Real^{m \times r}, \quad \Matrix{V} \in \Real^{n \times r}.
\end{equation*}
When such decomposition is known, it takes $(m+n)r$ real numbers to store $\Matrix{X}$ instead of the original $mn$. Similarly, the cost of the matrix-vector product $\Matrix{Xy}$ with $\Matrix{y} \in \Real^{n}$ changes from $\mathcal{O}(mn)$ arithmetic operations to $\mathcal{O}(mr+nr)$. The gains can be significant when the rank $r$ is low compared to the matrix sizes $m$ and $n$.

The effect is more profound for higher-order tensors: it is often impossible even to store all $n_1 \times \dots \times n_d$ entries of a tensor $\Tensor{X}$. There are several widely used tensor decompositions; we focus on the \emph{tensor-train} (TT) format \cite{oseledets2009breaking, oseledets2011tensor}:
\begin{equation*}
    \Tensor{X}(i_1, \ldots, i_d) = \sum_{\alpha_1 = 1}^{r_1} \dots \sum_{\alpha_{d-1} = 1}^{r_{d-1}} \Matrix{U}_1(i_1, \alpha_1) \Tensor{U}_2(\alpha_1, i_2, \alpha_2) \ldots \Matrix{U}_d(\alpha_{d-1}, i_d).
\end{equation*}
The matrices $\Matrix{U}_1 \in \Real^{n_1 \times r_1}$, $\Matrix{U}_d \in \Real^{r_{d-1} \times n_d}$ and tensors $\Tensor{U}_k \in \Real^{r_{k-1} \times n_k \times r_k}$, $2 \leq k \leq d-1$, are known as TT cores, and the numbers $\bm{r} = (r_1, \ldots, r_{d-1})$ are called the TT ranks of the decomposition. When a TT decomposition of $\Tensor{X}$ with small TT ranks is known, the storage requirements shrink from $\mathcal{O}(n^d)$ down to $\mathcal{O}(d n r^2)$, which makes it possible to work with extremely large tensors. The smallest possible TT ranks among the decompositions of $\Tensor{X}$ are called the TT ranks of the tensor and are denoted by $\ttrank{\Tensor{X}} \in \N^{d-1}$. In fact, the $k$th TT rank of $\Tensor{X}$ is equal to the rank of its $k$th unfolding matrix of size $n_1 \ldots n_{k} \times n_{k+1} \ldots n_d$.

\subsection{Matrices and tensors of fixed rank}
The matrices we get to encounter in applications are often full-rank, but they can be approximated with low-rank matrices. Consider the Frobenius inner product in $\Real^{m \times n}$,
\begin{equation*}
    \Dotp{\Matrix{X}}{\Matrix{Y}}{F} = \sum\nolimits_{i = 1}^{m} \sum\nolimits_{j = 1}^{n} \Matrix{X}(i, j) \Matrix{Y}(i, j),
\end{equation*}
and the set of rank-$r$ matrices
\begin{equation*}
    \mathcal{M}_{r} = \set{\Matrix{X} \in \Real^{m \times n}}{\rank{\Matrix{X}} = r}.
\end{equation*}
The problem of minimising the approximation error $\Norm{\Matrix{X} - \Matrix{X}_r}{F}$ over $\Matrix{X}_r \in \mathcal{M}_r$ is equivalent to computing the optimal metric projection $\Proj{\mathcal{M}_r}{\Matrix{X}}$. 

The set $\mathcal{M}_r$ is a smooth submanifold\footnote{As a side note, the set of matrices whose rank is smaller than or equal to $r$ is a closed algebraic variety, which is prox-regular at rank-$r$ matrices \cite{luke2013prox}.} of $\Real^{m \times n}$ \cite[Example~8.14]{lee2003introduction}. This result follows from an interpolation identity: if $\rank{\Matrix{X}} = r$ then there exist $r$ row-indices $I$ and $r$ column-indices $J$ such that
\begin{equation}
\label{eq:matrix_cross}
    \Matrix{X} = \Matrix{X}(:, J) \cdot \Matrix{X}(I, J)^{-1} \cdot \Matrix{X}(I, :).
\end{equation}
Being a smooth submanifold, $\mathcal{M}_r$ is locally closed at each point, so $\Proj{\mathcal{M}_r}{\Matrix{X}}$ is non-empty for every matrix $\Matrix{X}$ that is sufficiently close to $\mathcal{M}_r$ (recall Lemma~\ref{lemma:locally_closed_local_proj}). 

Matrix analysis fully describes the metric projections: $\Proj{\mathcal{M}_r}{\Matrix{X}}$ is non-empty if and only if $\rank{\Matrix{X}} \geq r$, and every $\Matrix{X}_r \in \Proj{\mathcal{M}_r}{\Matrix{X}}$ can be obtained as a truncated SVD of $\Matrix{X}$ \cite{trefethen1997numerical}; the SVD can be numerically computed in $\mathcal{O}(mn \cdot \min\{m,n\})$ operations \cite{demmel1997applied}.

A similar story holds for tensors with fixed TT ranks:
\begin{equation*}
    \mathcal{M}_{\bm{r}}^{TT} = \set{\Tensor{X} \in \Real^{n_1 \times \dots \times n_d}}{\ttrank{\Tensor{X}} = \bm{r} = (r_1, \ldots, r_{d-1})}.
\end{equation*}
The set is a smooth submanifold \cite{holtz2012manifolds}, and so the problem of finding the best approximation of $\Tensor{X}$ in the TT format is well-posed for all $\Tensor{X}$ sufficiently close to $\mathcal{M}_{\bm{r}}^{TT}$. However, when $d > 2$, we cannot \textit{compute} the optimal metric projection $\Proj{\mathcal{M}_{\bm{r}}^{TT}}{\Tensor{X}}$. This issue naturally leads us to quasioptimal metric projections.

\subsection{Quasioptimal low-rank approximation}
\label{subsec:qoptimal_lowrank}
It is possible to extend the idea of truncating the SVD from matrices to tensors, albeit with weaker outcomes. The \textsc{ttsvd} algorithm \cite{oseledets2011tensor} approximates a tensor $\Tensor{X}$ of order $d$ in the TT format by recursively truncating the SVDs of the unfoldings. Even though the resulting tensor $\Tensor{X}_{\bm{r}}$ with $\ttrank{\Tensor{X}_{\bm{r}}} = \bm{r}$ is not guaranteed to be the best approximation, it belongs to $\Proj{\mathcal{M}_{\bm{r}}^{TT}}{\Tensor{X}; \sqrt{d-1}}$. The computational complexity of \textsc{ttsvd} is $\mathcal{O}(n^{d+1})$ operations, dominated by the SVD of the first unfolding matrix.

Quasioptimal low-rank projections cannot be avoided in the higher-order setting, but they also appear in the context of matrices when we try to come up with \textit{faster} algorithms. The downside of the truncated SVD approach (\textsc{svd}) is that it computes all singular values and vectors of a matrix, while we actually require only a part of them. 

To reduce the complexity, we can try to approximate only the dominant singular subspaces or use a conceptually different method, unrelated to the SVD. We shall focus on three algorithms for fast low-rank matrix approximation:
\begin{itemize}
    \item the randomised SVD (\textsc{rsvd}, \cite{halko2011finding}) forms a small random \emph{sketch} of the matrix to approximate its dominant singular subspace; this requires $\mathcal{O}(mnr)$ operations, which is the same amount of work needed to form a rank-$r$ matrix from its factors;
    \item the cross approximation based on the maximum-volume principle (\textsc{vol}, \cite{goreinov2001maximal}) turns the interpolation identity \eqref{eq:matrix_cross} into an approximation tool by choosing the submatrix $\Matrix{X}(I,J)$ with the locally largest modulus of the determinant; good index sets $I$ and $J$ can be selected based on $\mathcal{O}(mr+nr)$ adaptively sampled entries of the matrix \cite{goreinov2010find} with the total complexity of $\mathcal{O}( (m+n)(r \cdot \mathrm{elem}(X) + r^2))$ operations, where $\mathrm{elem}(X)$ is the cost of computing a single element of $X$;
    \item the cross approximation based on the maximum-projective-volume principle (\textsc{pvol}, \cite{osinsky2018pseudo, osinsky2018rectangular}) generalises \eqref{eq:matrix_cross} even further and allows $I$ and $J$ to contain more indices than the desired rank (typically two-three times as many); it has the same asymptotic complexity as \textsc{vol}, but is able to construct better low-rank approximations at the expense of the larger hidden constant.
\end{itemize}
Find a more detailed discussion of these algorithms in the supplementary material. 

\subsection{Low-rank nonnegative approximation}
\label{subsec:nonnegative}

Matrices and tensors with nonnegative entries arise in applications related to images, video, recommender systems, probability, kinetic equations. Preserving nonnegativity in the low-rank approximation serves two main purposes: it keeps the object physically meaningful and helps to avoid potential numerical instabilities in the future processing.

The most popular technique is \emph{nonnegative matrix factorisation} \cite{gillis2020nonnegative}, it consists in searching for a low-rank approximation with nonnegative factors. However, the \emph{nonnegative rank} can be significantly larger than the usual rank \cite{beasley2009real}, which leads to slower postprocessing. In scientific computing, low-rank approximations are meant to make algorithms faster, so it is desirable to keep the rank as low as possible.

\subsubsection{Matrix approximation} 
An alternative point of view was suggested in \cite{vanluyten2008nonnegative}: instead of enforcing nonnegativity on the factors, we can try to find an optimal metric projection of the given matrix onto $\mathcal{M}_r \cap \Real_{+}^{m \times n}$. Some properties of the metric projection were investigated in \cite{grussler2015optimal}, where we can also find a numerical comparison of several algorithms, among which the AP (under the name \emph{lift-and-project}); the AP were later rediscovered in \cite{song2020nonnegative}.

\paragraph{Previous alternating-projection algorithms}
The truncated SVD of a matrix yields its optimal metric projection onto $\mathcal{M}_r$. To project onto $\Real_{+}^{m \times n}$, we simply need to set every negative entry of a matrix to zero. This means that the (idealistically exact) SVD-based AP
\begin{equation*}
    \Matrix{Z}_{k+1} = \Proj{\Real_{+}^{m \times n}}{\Matrix{Y}_k}, \quad \Matrix{Y}_{k+1} \in \Proj{\mathcal{M}_r}{\Matrix{Z}_{k+1}}, \quad k \in \N_0,
\end{equation*}
converge to $\mathcal{M}_r \cap \Real_{+}^{m \times n}$ locally around points of transversal intersection \cite{lewis2008alternating, lewis2009local, andersson2013alternating}. Such points do exist (e.g., rank-$r$ matrices with \emph{positive} entries), but we are not aware of concise ways to describe them in the entirety.

The computational complexities of these projections are not balanced: \textsc{svd} requires $\mathcal{O}(mn \cdot \min\{m,n\})$ operations and the nonnegative projection of a factorised matrix takes $\mathcal{O}(mnr)$ to form the full matrix and update its entries. So to significantly reduce the overall cost of one iteration, we should focus on the low-rank projection. In \cite{song2022tangent}, it was proposed to project the nonnegative matrix onto the \emph{tangent space} to $\mathcal{M}_r$ before projecting onto $\mathcal{M}_r$ itself. Such inexact alternating projections, which are similar in flavour to the subject of \cite{drusvyatskiy2019local}, have the complexity of $\mathcal{O}(mnr)$ operations per iteration. A different approach was taken in \cite{matveev2023sketching}, where the use of \textsc{rsvd} reduced the cost to $\mathcal{O}(mnr)$ operations, but with a potentially smaller constant than in \cite{song2022tangent}.

\paragraph{New alternating-projection algorithms}
We suggest a way to reduce the complexity of the two successive projections \emph{below} the cost of the nonnegative projection.\footnote{The same ideas are applicable to other entrywise constraints.} The \textsc{vol} and \textsc{pvol} methods operate by adaptively sampling $\mathcal{O}(mr + nr)$ elements of a matrix, which means that $\mathcal{O}(mnr)$ operations need not be spent to form the full matrix for the nonnegative projection. Instead, we can compute each required element with $\mathcal{O}(r)$ operations and make it nonnegative on the fly. This gives the asymptotic complexity of $\mathcal{O}(m r^2 + n r^2)$ operations per AP iteration.

The hidden constant here depends on the number of times the index sets $I$ and $J$ need to be updated. We can expect to reduce the number of these updates if we use the previously computed $I$ and $J$ as the starting index sets for the next step --- we shall call this approach \emph{warm-start} as opposed to the \emph{cold-start} approach, where we always start with new random $I$ and $J$.

\paragraph{Example: Solution to a two-component coagulation equation}
We test the performance of the proposed algorithms on an example from \cite{matveev2023sketching}. The following nonnegative function
\begin{equation}
\label{eq:smolukh}
    \nu(v_1, v_2, t) = \frac{e^{-v_1 - v_2}} {(1 +  t /2)^2} \cdot I_0 \left(2 \sqrt{\frac{v_1 v_2 t}{ t + 2}}\right),
\end{equation}
where $I_0$ is the modified Bessel function of order zero, is the solution to a Smoluchowski coagulation equation. Let $\Matrix{X}$ be the $1024 \times 1024$ matrix that we get by discretising the function at $t = 6$ on an equidistant tensor grid with a step of $0.1$ over $[0, \infty) \times [0, \infty)$.

We show $\Matrix{X}$ and its singular values in Fig.~\ref{fig:smolukh} and plot the absolute value of the smallest negative entry of each $\Matrix{X}_r \in \Proj{\mathcal{M}_r}{\Matrix{X}}$. The approximations for $r = 0$ or $r = 1$ are nonnegative (by the Perron--Frobenius theorem for $r = 1$). But $\Matrix{X}_r$ contains negative entries for every $r > 1$, and they decay at the same rate as the singular values.

\begin{figure}[ht]
\centering
	\includegraphics[width=\textwidth]{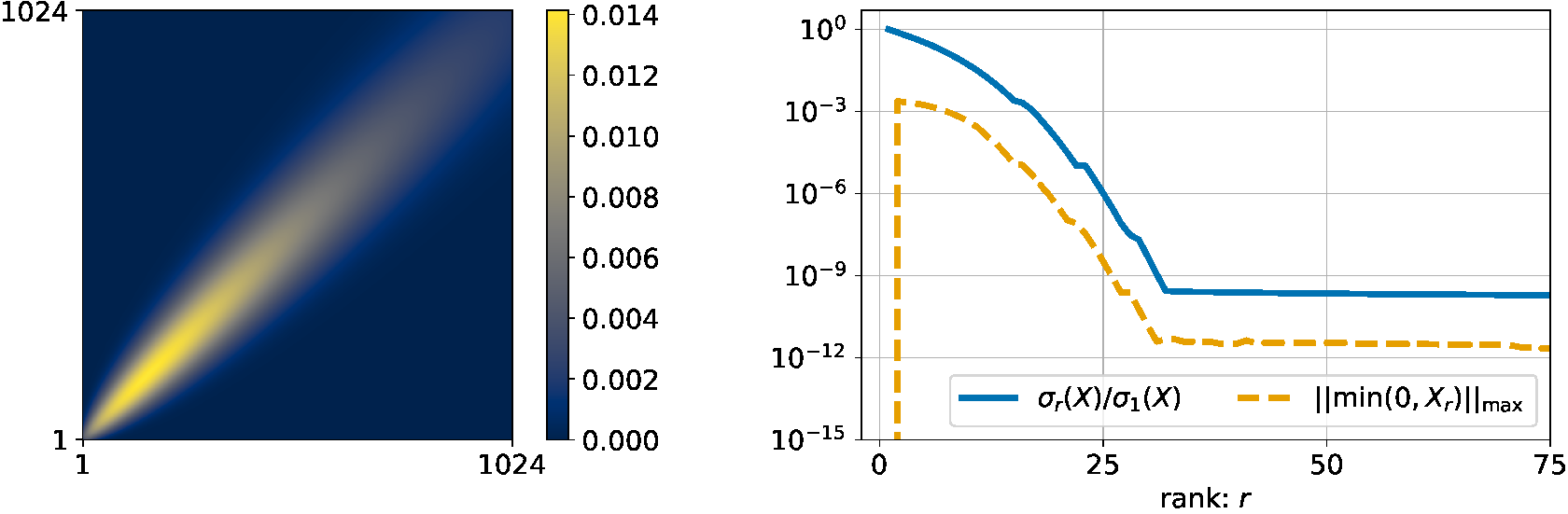}
\caption{Properties of the discretised solution \eqref{eq:smolukh} to a two-component Smoluchowski equation: (left)~the solution itself; (right)~its normalised singular values and the modulus of the smallest negative entry of its best rank-$r$ approximation for varying $r$.}
\label{fig:smolukh}
\end{figure}

\paragraph{Alternating projections with different low-rank projections} Let us try to refine $\Matrix{X}_{10}$ to make it nonnegative. We compare six AP algorithms that use \textsc{svd}, \textsc{rsvd}, warm-started $\textsc{vol}_w$, cold-started $\textsc{vol}_c$, warm-started $\textsc{pvol}_w$ with $|I| = |J| = 30$ and cold-started $\textsc{pvol}_c$ with $|I| = |J| = 30$.

We care about two performance metrics: the rate at which the low-rank iterates $\Matrix{Y}_k$ approach the nonnegative orthant and the approximation error $\Norm{\Matrix{X} - \Matrix{Y}_k}{F}$ and how it compares with the initial error $\Norm{\Matrix{X} - \Matrix{Y}_0}{F} = \dist{\Matrix{X}}{\mathcal{M}_{10}}$. We ran 10 random experiments with different random seeds for every type of the low-rank projection (except for \textsc{svd}) and performed 1000 iterations of the AP. In Fig.~\ref{fig:smolukh_ap}, we show how the Frobenius norm of the negative elements $\Norm{\min(0, \Matrix{Y}_k)}{F}$ changes in the course of iterations by plotting its median together with the $25\%$ and $10\%$ percentiles. The median approximation errors after 10 and 1000 iterations are collected in Tab.~\ref{tab:smolukh_ap}.

\begin{figure}[ht]
\centering
    \includegraphics[width=0.6\linewidth]{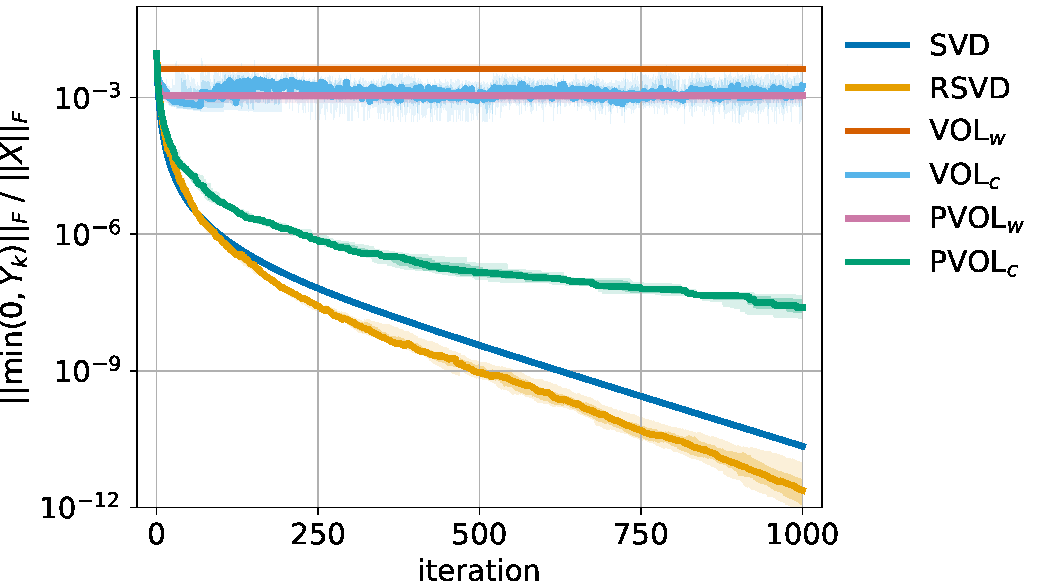}
\caption{Convergence of the low-rank iterates $\Matrix{Y}_k$ to $\Real_{+}^{m \times n}$ for the alternating projections with inexact low-rank projections: the median, the $25\%$ and the $10\%$ percentiles.}
\label{fig:smolukh_ap}
\end{figure}

\begin{table}[ht]
    \centering
    \caption{Median approximation errors of the low-rank iterates $\Matrix{Y}_k$ after $10$ and $1000$ steps of the alternating projections with inexact low-rank projections.}
    \label{tab:smolukh_ap}
    \begin{tabular*}{\textwidth}{@{\extracolsep\fill}ccccccc@{}}
        \toprule
        Error growth & \textsc{svd} & \textsc{rsvd} & $\textsc{vol}_w$ & $\textsc{vol}_c$ & $\textsc{pvol}_w$ & $\textsc{pvol}_c$ \\
        \midrule
        $\tfrac{\Norm{\Matrix{X} - \Matrix{Y}_{10}}{F}}{\Norm{\Matrix{X} - \Matrix{Y}_{0}}{F}}$ & $1.125$ & $1.221$ & $1.097$ & $1.150$ & $1.137$ & $1.134$ \\
        $\tfrac{\Norm{\Matrix{X} - \Matrix{Y}_{1000}}{F}}{\Norm{\Matrix{X} - \Matrix{Y}_{0}}{F}}$ & $1.132$ & $1.226$ & $1.097$ & $4.834$ & $1.139$ & $1.144$ \\
        \bottomrule\\
    \end{tabular*}
\end{table}

The linear convergence of \textsc{ap-svd} and \textsc{ap-rsvd} has already been observed in \cite{matveev2023sketching}. The warm-started $\textsc{ap-vol}_w$ and $\textsc{ap-pvol}_w$, however, fail to converge: it takes 1 iteration for the former and about 40 for the latter to reach the plateau. The explanation for such behaviour lies in the cross-approximation approach itself. While \textsc{vol} does not satisfy the interpolation property \eqref{eq:matrix_cross} per se when applied to a matrix of higher rank, it still interpolates the selected rows and columns. This means that $\textsc{ap-vol}_w$ at iteration $k > 1$ deals with the rows $\Matrix{Y}_k(I,:)$ and columns $\Matrix{Y}_k(:,J)$ that are unaffected by the nonnegative projection and with the submatrix $\Matrix{Y}_k(I,J)$ that has locally largest volume by construction. It follows that $\Matrix{Y}_{k+1} = \Matrix{Y}_k$, because \textsc{vol} does not know what happens outside of its index sets. For the \textsc{pvol} algorithm with $|I| > r$ and $|J| > r$, it is still likely that the index sets $I$ and $J$ will cease to change after a couple of iterations of $\textsc{ap-pvol}_w$ since the projective volume will be largely determined by the positive entries of the submatrix (provided that the negative entries are sufficiently small in the absolute value); with fixed $I$ and $J$, the AP will only affect $\Matrix{Y}_k(I,:)$ and $\Matrix{Y}_k(:,J)$, making them nonnegative and staying ignorant to all the other entries of the matrix.

The main issue of $\textsc{ap-vol}_w$ and $\textsc{ap-pvol}_w$ is that only the \emph drives the low-rank projection step; with the cold-started AP, we attempt to inject some \emph{global information} into cross approximation. As Fig.~\ref{fig:smolukh_ap} demonstrates, the freshly generated index sets are not sufficient to make $\textsc{ap-vol}_c$ converge to a nonnegative matrix: they only lead to irregular behaviour and growth of the approximation error (see Tab.~\ref{tab:smolukh_ap}). At the same time, $\textsc{ap-pvol}_c$ begins to converge, even if slower than \textsc{ap-svd} and \textsc{ap-rsvd}. 

The approximation errors achieved by the convergent methods (\textsc{ap-svd}, \textsc{ap-rsvd}, and $\textsc{ap-pvol}_c$) are only $20\%$ larger than the initial, smallest error. Another observation from Tab.~\ref{tab:smolukh_ap} is that most of the growth happens during the first 10 iterations.

\paragraph{Accelerated alternating projections}
Thousand iterations were not enough for \textsc{ap-svd} and \textsc{ap-rsvd} to produce a nonnegative low-rank matrix. We can try to improve their convergence rates by modifying the AP framework itself. Since the nonnegative orthant $\Real_{+}^{m \times n}$ is an obtuse convex cone, we can use the \textit{reflection-projection} method \cite{bauschke2004reflection},
\begin{equation*}
    \Matrix{Z}_{k+1} = 2 \cdot \Proj{\Real_{+}^{m \times n}}{\Matrix{Y}_k} - \Matrix{Y}_k, \quad \Matrix{Y}_{k+1} \in \Proj{\mathcal{M}_r}{\Matrix{Z}_{k+1}}, \quad k \in \N_0,
\end{equation*}
where the reflection is nothing but the entrywise absolute value $|\Matrix{Y}_k|$. The main idea here is that the reflection pushes the current iterate towards the interior of $\Real_{+}^{m \times n}$. 

Another possibility is to replace the nonnegative projection with a scalar shift, which can be chosen based on how far $\Matrix{Y}_k$ is from being nonnegative. We propose the \textit{shift-projection} method with a parameter $\beta > 0$:
\begin{equation*}
    \alpha_k = \beta \cdot \Norm{\min(0, \Matrix{Y}_k)}{max}, \quad \Matrix{Z}_{k+1} = \Matrix{Y}_k + \alpha_k, \quad \Matrix{Y}_{k+1} \in \Proj{\mathcal{M}_r}{\Matrix{Z}_{k+1}}, \quad k \in \N_0.
\end{equation*}
The additive shift is similar to the projection when it acts on the smallest negative entry and is similar to the reflection when it acts on the negative entries that are of the same order as the smallest one. However, unlike what happens with the projection and reflection, the positive entries change as well. In addition, the scalar shift increases the rank of $\Matrix{Y}_k$ at most by one, so the SVD of $\Matrix{Z}_{k+1}$ can be computed efficiently.

Computing $\alpha_k$ is easy when we allow ourselves to explicitly form $\Matrix{Y}_k$ by multiplying the low-rank factors --- this is exactly what we try to avoid with \textsc{vol} and \textsc{pvol}, though. We can come up with a surrogate for $\Norm{\min(0, \Matrix{Y}_k)}{max}$ by applying rank-one \textsc{vol} to the entrywise $\mathrm{arccot}(\Matrix{Y}_k)$: the entry of largest absolute value in the transformed matrix corresponds to the smallest negative entry of the original matrix. To achieve better coverage, we start with a random initial position each time we calculate $\alpha_k$.

We test the modified schemes in the same settings as before. There are 18 algorithms to compare, which use 6 different low-rank projections and 3 versions\footnote{In the supplementary material, we present results for one more version of the AP.} of the AP: the original AP, the reflection-projections, and the shift-projections with $\beta = 1/2$. We present the results in Fig.~\ref{fig:smolukh_modified_ap} and Tab.~\ref{tab:smolukh_modified_ap}. 

\begin{figure}[h!]
\centering
	\includegraphics[width=0.9\textwidth]{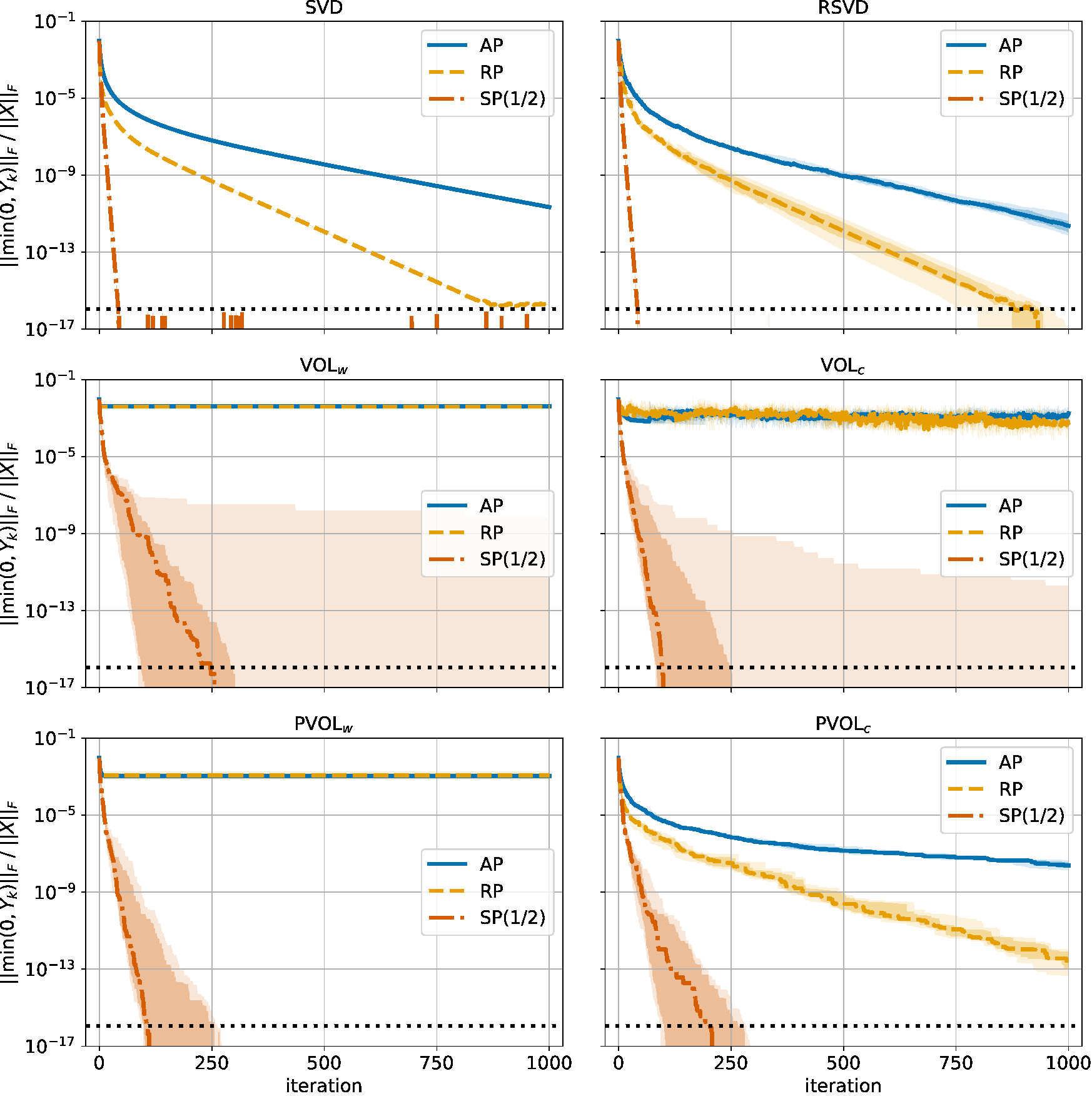}
\caption{Convergence of the low-rank iterates $\Matrix{Y}_k$ to the nonnegative orthant for the modified alternating-projection schemes with inexact low-rank projections: the median, the $25\%$ and the $10\%$ percentiles.}
\label{fig:smolukh_modified_ap}
\end{figure}

\begin{table}[ht]
    \centering
    \caption{Median approximation errors of the low-rank iterates $\Matrix{Y}_k$ after $10 / 1000$ steps of modified alternating-projection schemes with inexact low-rank projections.}
    \label{tab:smolukh_modified_ap}
    \begin{tabular*}{\textwidth}{@{\extracolsep\fill}lcccccc@{}}
        \toprule
        Modification  & \textsc{svd} & \textsc{rsvd} & $\textsc{vol}_w$ & $\textsc{vol}_c$ & $\textsc{pvol}_w$ & $\textsc{pvol}_c$ \\
        \midrule
        \textsc{ap} & $1.13 / 1.13$ & $1.22 / 1.23$ & $1.10 / 1.10$ & $1.15 / 4.83$ & $1.14 / 1.14$ & $1.13 / 1.14$ \\
        \textsc{rp} & $1.19 / 1.19$ & $1.48 / 1.48$ & $1.33 / 1.33$ & $2.35 / 9.41$ & $1.24 / 1.24$ & $1.23 / 1.23$ \\
        $\textsc{sp}(\tfrac{1}{2})$ & $5.78 / 5.78$ & $5.78 / 5.78$ & $5.55 / 5.76$ & $5.62 / 5.79$ & $5.60 / 5.79$ & $5.65 / 5.79$ \\
        \bottomrule\\
    \end{tabular*} 
\end{table}

The modifications \textsc{rp-svd}, \textsc{rp-rsvd} and $\textsc{rp-pvol}_c$ converge faster than their original counterparts and produce slightly higher approximation errors. The $\textsc{sp}(\tfrac{1}{2})$ algorithm converges after a \emph{finite} number of steps for \emph{all} low-rank projections with the resulting approximation error about $5.5$ times larger than the initial error. The shift-projections appear to be `aggressive' enough with injecting the global information to make the warm-started $\textsc{sp}(\tfrac{1}{2})$-$\textsc{vol}_w$ and $\textsc{sp}(\tfrac{1}{2})$-$\textsc{pvol}_w$ converge. The performance of the latter is more consistent, though: it converged in each random experiment.

\subsubsection{Tensor-train approximation}
Next, we evaluate the performance of the accelerated QAP by computing low-rank nonnegative TT approximations with \textsc{ttsvd}. Linear convergence of the standard QAP for this problem was numerically established in \cite{sultonov2023low}. A different approach was proposed in \cite{jiang2023nonnegative}: instead of computing quasioptimal metric projections onto the set of low-rank tensors, the authors used optimal metric projections onto several sets of low-rank matrices. Low-rank nonnegative TT approximations were also studied in \cite{shcherbakova2022fast}.

We present an experiment that illustrates what happens to the negative elements during the AP. Let $f \in \Real_{+}^{1024}$ be a discretised mixture of one-dimensional Gaussian probability densities in Fig.~\ref{fig:mixture}. We can consider $f$ as a tenth-order $2 \times 2 \times \cdots \times 2$ tensor and approximate it in the TT format --- this gives the so-called \emph{quantised} TT (QTT) approximation of the original vector. We compressed $f$ using \textsc{ttsvd} with relative accuracy $10^{-2}$ to get a QTT approximation of rank $\bm{r} = (2,2,2,3,3,4,5,4,2)$ and the actual relative approximation error of $6.97 \cdot 10^{-3}$.

\begin{figure}[t!]
\centering
    \includegraphics[width=0.45\linewidth]{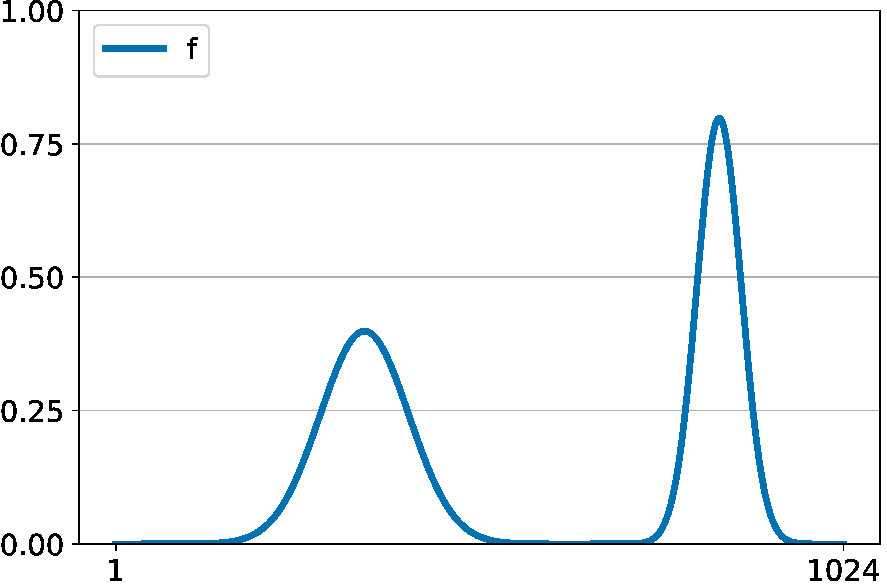}
\caption{Mixture of one-dimensional Gaussian probability densities.}
\label{fig:mixture}
\end{figure}

The QTT approximation produced by \textsc{ttsvd} contains negative entries, and Fig.~\ref{fig:mixture_negative} shows that they appear as intervals rather than separate points. In Fig.~\ref{fig:mixture_negative}, we compare the initial QTT approximation with the 10th iterations of \textsc{ap-ttsvd}, \textsc{rp-ttsvd}, and $\textsc{sp}(\tfrac{1}{2})$-\textsc{ttsvd}: even after a small number of iterations, the number of negative elements is significantly reduced until only a handful of individual points is left. With further iterations, the remaining entries decay at a linear rate, and the first 50 iterations are responsible for the most rapid decrease of their Frobenius norm (see Fig.~\ref{fig:mixture_convergence}). The approximation errors obtained with \textsc{ap-ttsvd}, \textsc{rp-ttsvd}, and $\textsc{sp}(\tfrac{1}{2})$-\textsc{ttsvd} increase by a factor of $1.08$, $1.09$, $2.68$ after 10 iterations and by a factor of $1.09$, $1.09$, $2.68$ after 1000 iterations, respectively. See also the supplementary material.

\begin{figure}[ht!]
\centering
	\includegraphics[width=0.9\textwidth]{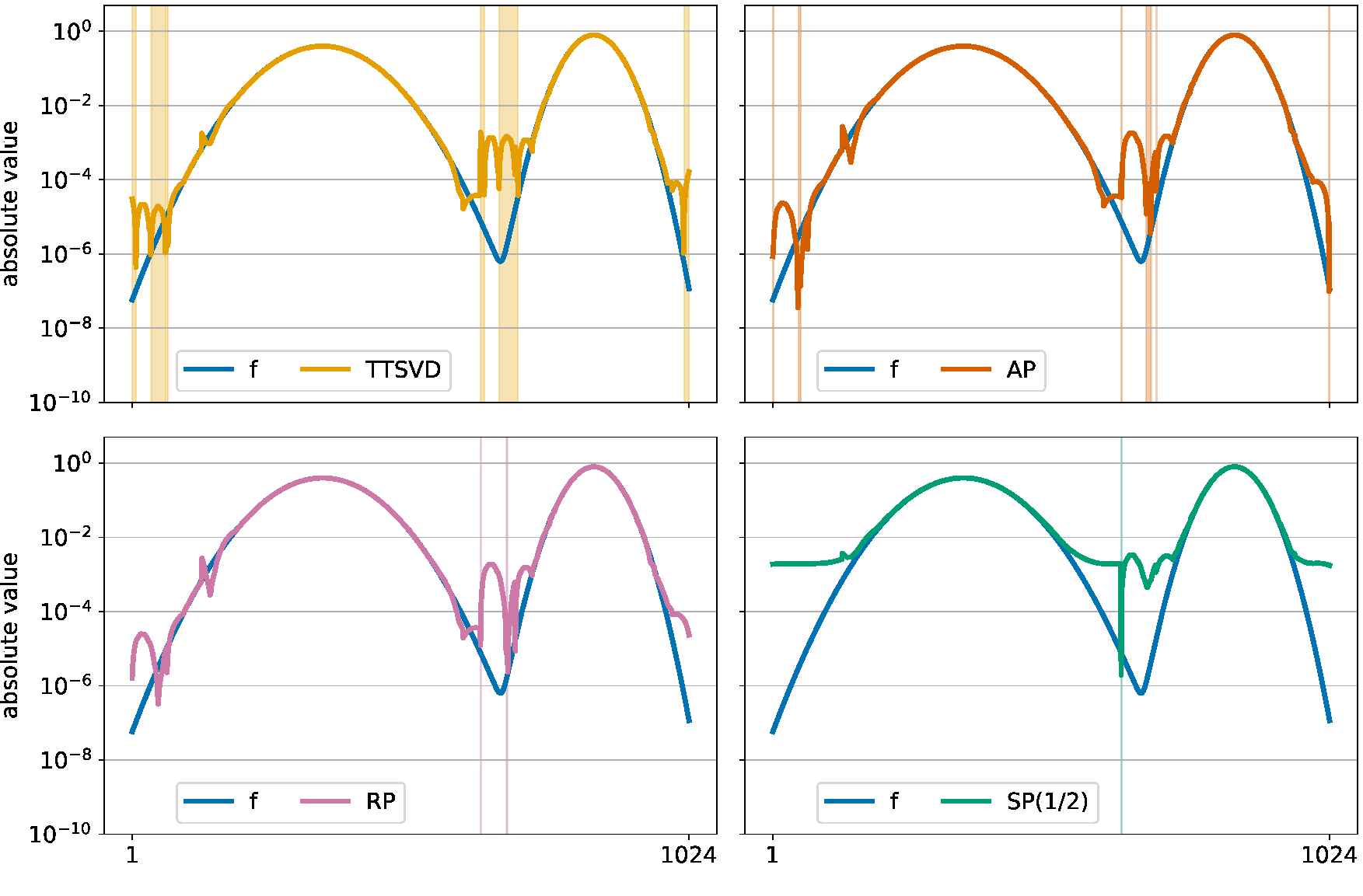}  
\caption{Mixture of one-dimensional Gaussian probability densities and the absolute value of its QTT approximations with highlighted negative entries: the \textsc{ttsvd} and its refinements after 10 iterations of \textsc{ap-ttsvd}, \textsc{rp-ttsvd} and $\textsc{sp}(\tfrac{1}{2})$-\textsc{ttsvd}.}
\label{fig:mixture_negative}
\end{figure}

\begin{figure}[h!]
\centering
	\includegraphics[width=0.9\textwidth]{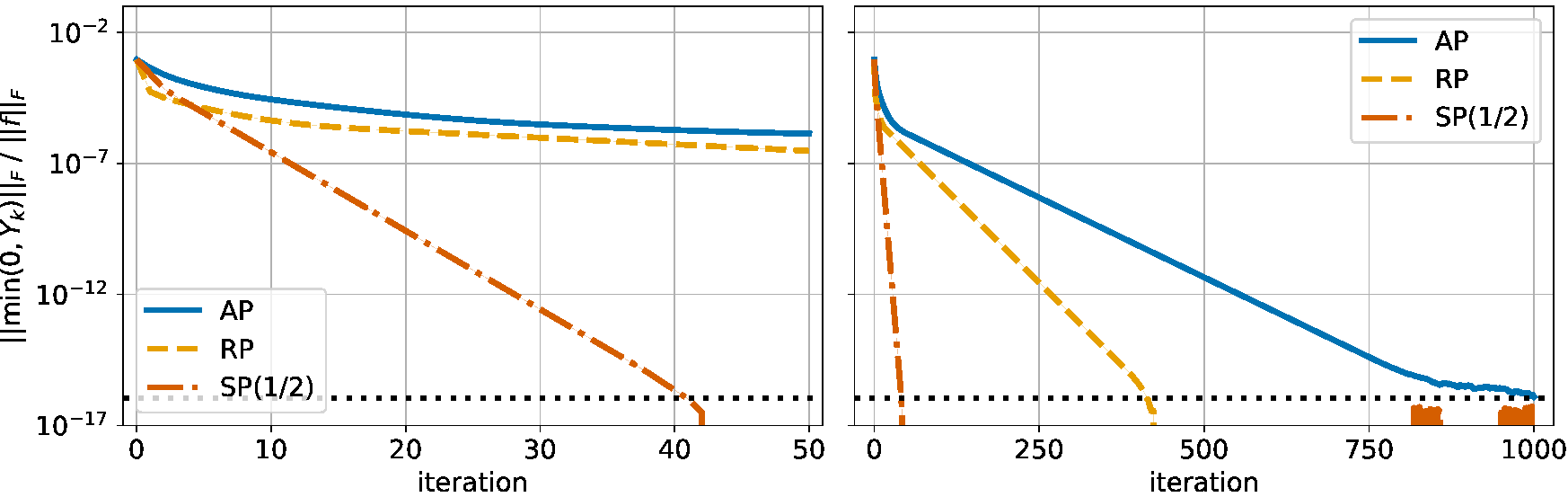}
\caption{Convergence of the QTT approximations to the nonnegative orthant for the modified alternating-projection schemes based on \textsc{ttsvd}.}
\label{fig:mixture_convergence}
\end{figure}

\subsubsection{Regularised matrix completion}
Out previous experiments show that the first iterations of the AP play a significant role in computing low-rank nonnegative approximations: most of the negative elements disappear during this stage, and the approximation error almost stabilises. This motivates us to incorporate a small number of iterations of the AP into computational low-rank routines as a way to regularise their solutions towards nonnegativity.

Consider the matrix completion problem. Let $\Matrix{X}_* \in \mathcal{M}_r$, and let $\Omega \subseteq \{1, \ldots, m\} \times \{1, \ldots, n\}$ be a set of index pairs. Assuming that we know only those entries of $\Matrix{X}_*$ that are indexed by $\Omega$, we aim to recover the whole matrix. One of the possible ways to approach the problem is via Riemannian optimisation \cite{vandereycken2013low}. Without going into the algorithmic details, let us note that a single step of the Riemannian gradient descent takes  $\mathcal{O}(\max\{m,n\} r^2 + |\Omega|r)$ operations.

An important question is how many entries $|\Omega|$ are needed to recover $\Matrix{X}_*$. The manifold $\mathcal{M}_r$ is of dimension $r(m + n - r)$, so at least this many samples are required, but in general we need $|\Omega| = \gamma \cdot r(m + n - r)$ with some $\gamma > 1$. If there is any additional information about the matrix, it can be used to recover $\Matrix{X}_*$ from a smaller sample.

Assume that $\Matrix{X}_*$ is nonnegative. We propose to use a small number of warm-started $\textsc{ap-vol}_w$ iterations after each gradient step as regularisation. Crucially, the asymptotic complexity of the regularised Riemannian gradient descent remains the same. To test the idea, we fix $m = n = 1000$, $r = 5$, and generate 20 random nonnegative matrices $\Matrix{X}_*$ as products of $n \times r$ low-rank factors with independent identically distributed entries from the uniform distribution on the interval $(0, n^{-1/2})$. Next, for each value of the oversampling parameter $\gamma$ and each test matrix, we generate $\gamma \cdot r(m + n - r)$ index pairs uniformly at random with replacement and a rank-$r$ initial condition as a product of two random $n \times r$ matrices with orthonormal columns. We then run 1000 iterations of the Riemannian gradient descent and 1000 iterations of the Riemannian gradient descent regularised with \emph{one} iteration of $\textsc{ap-vol}_w$ after each gradient step --- from the same initial condition. For each value of $\gamma$, we calculate how many of the 20 problem instances achieve the relative reconstruction error of $10^{-6}$ in the Frobenius norm and plot the corresponding frequency of `success'. The results in Fig.~\ref{fig:completion} show that such simple regularisation makes it possible to recover low-rank nonnegative matrices from \emph{far fewer} entries than required by the original Riemannian gradient descent.

\begin{figure}[ht!]
\centering
    \includegraphics[width=0.45\linewidth]{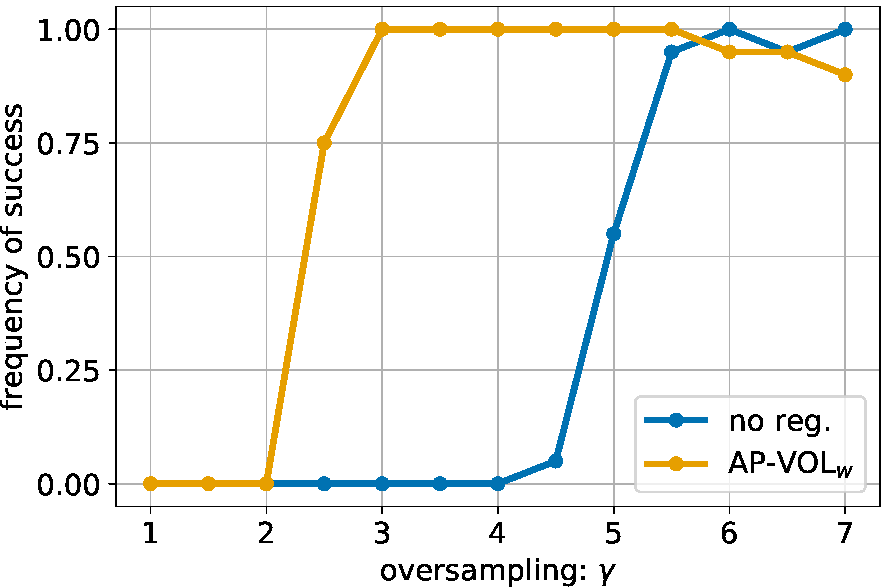}
\caption{Riemannian matrix completion with alternating-projections regularisation can recover nonnegative matrices from fewer entries.}
\label{fig:completion}
\end{figure}

The performance of $\textsc{ap-vol}_w$ for matrix completion suggests that fast AP algorithms can be used as regularisers for other problems in scientific computing. The most obvious possible extension is TT completion: Riemannian optimisation \cite{budzinskiy2023tensor} can be combined with the AP based on TT cross approximation \cite{oseledets2010tt}. This approach could also be applied to other low-rank reconstruction problems, such as non-parametric estimation of probability densities \cite{rohrbach2022rank}.

Computational PDEs is another potential\footnote{Since the first publication of our article, the AP have been applied to the Smoluchowski equation \cite{matveev2025nonnegative}.} area of application: the solutions to certain PDEs are naturally nonnegative, and low-rank numerical schemes \cite{kazeev2014direct,manzini2021nonnegative} should preserve this property to keep the numerical solutions physically meaningful. In addition, the appearance of negative entries could lead to numerical instabilities.

\subsection{Low-rank approximation in the maximum norm}
\label{subsec:maximum}

How well can a matrix be approximated with a matrix of low rank? The usual answer is: look at the singular values. Their decay determines the smallest error of low-rank approximation in the spectral norm, the Frobenius norm, and other unitarily invariant norms. An approximation guarantee for the maximum norm \eqref{eq:max_norm}, which is not unitarily invariant, was obtained in \cite{srebro2005rank}: any $n \times n$ matrix $\Matrix{X}$ can be approximated as $\Norm{\Matrix{X - Y}}{max} \leq \varepsilon \Norm{\Matrix{X}}{2}$ with $\rank{\Matrix{Y}} \leq \lceil 9 \log(3n^2) / \varepsilon^2\rceil$.

When $\varepsilon$ is fixed, one can find a rank-one $\Matrix{Y}$ such that $\Norm{\Matrix{X} - \Matrix{Y}}{\max} \leq \varepsilon$, or show that it does not exist, in a finite number of operations, but the problem is NP-hard \cite{gillis2019low}. The minimisation of $\Norm{\Matrix{X} - \Matrix{Y}}{\max}$ was analysed in \cite{zamarashkin2022best,morozov2023optimal}, but the convergence to the global minimum remains unproved for $r > 1$. The numerical experiments of \cite{zamarashkin2022best} with random matrices and $r = \sqrt{n}$ suggest a bound $\inf_{\Matrix{Y} \in \mathcal{M}_r} \Norm{\Matrix{X} - \Matrix{Y}}{\max} \leq \log^{0.6}(n) / \sqrt{n}$ that is asymptotically tighter than the bound of \cite{srebro2005rank} by a factor of $n^{1/4}$.

We propose to numerically estimate $ \inf_{\Matrix{Y} \in \mathcal{M}_r} \Norm{\Matrix{X} - \Matrix{Y}}{\max}$ with the AP.\footnote{This approach can be used with other vectorized $\ell_p$ norms; the case we consider is $p = \infty$.} For the given $\Matrix{X} \in \Real^{m \times n}$ and $r < \rank{X}$, we choose an initial approximation $\Matrix{Y}_0 \in \mathcal{M}_r$ and set $\varepsilon_{+} = \Norm{\Matrix{X} - \Matrix{Y}_0}{\max}$ and $\varepsilon_{-} = 0$. Now, we can perform a variant of \textit{binary search} on the interval $(\varepsilon_{-}, \varepsilon_{+}]$. We pick the middle of the interval $\varepsilon = \frac{1}{2}(\varepsilon_{-} + \varepsilon_{+})$ and run the (Q)AP for $\mathcal{M}_r$ and the closed ball $B_\varepsilon(\Matrix{X}) = \set{\Matrix{Z} \in \Real^{m \times n}}{\Norm{\Matrix{X} - \Matrix{Z}}{\max} \leq \varepsilon}$. The metric projection onto $B_\varepsilon(\Matrix{X})$ can be computed as $\Proj{B_\varepsilon(\Matrix{X})}{\Matrix{Y}} = \Matrix{X} + \Proj{B_\varepsilon(\Matrix{0})}{\Matrix{Y} - \Matrix{X}}$ and amounts to clipping the large entries of $\Matrix{Y} - \Matrix{X}$. We stop the iterations when
\begin{equation*}
    \Norm{\Matrix{X} - \Matrix{Y}_{k}}{\max} < (1 + \delta) \cdot \Norm{\Matrix{X} - \Matrix{Y}_{k-1}}{\max}, \quad 0 < \delta < 1.
\end{equation*}
If the achieved approximation error $\Norm{\Matrix{X} - \Matrix{Y}_{k}}{\max}$ is not sufficiently close to $\varepsilon$, i.e.,
\begin{equation*}
    \Norm{\Matrix{X} - \Matrix{Y}_{k}}{\max} > \varepsilon_{-} + \tfrac{2}{3} (\varepsilon_{+} - \varepsilon_{-}) > \varepsilon_{-} + \tfrac{1}{2} (\varepsilon_{+} - \varepsilon_{-}) = \varepsilon,
\end{equation*}
we increase $\varepsilon_{-}$ by a small amount such as $(\varepsilon_{+} - \varepsilon_{-}) / 50$. Then, we update $\varepsilon_{+}$ with $\min\{ \varepsilon_{+}, \Norm{\Matrix{X} - \Matrix{Y}_{k}}{\max} \}$ and repeat the process for the new interval $(\varepsilon_{-}, \varepsilon_{+}]$ until it becomes small enough, at which point we conclude that $\inf_{\Matrix{Y} \in \mathcal{M}_r} \Norm{\Matrix{X} - \Matrix{Y}}{\max} \leq \varepsilon_{+}$.

\subsubsection{Orthogonal matrices}
We test these AP on two types of orthogonal matrices: random orthogonal matrices and identity matrices. First, we fix $n = 1600$ and vary the rank. The initial conditions $\Matrix{Y}_0$ are chosen as the product of two $n \times r$ random matrices with orthonormal columns in the random case and as the product of two $n \times r$ random Gaussian matrices with variance $1 / r$ for the identity matrix. Note that we cannot initialise the iterations for the identity matrix with its truncated SVD, because this leads to a loop. For the random orthogonal matrices, we perform 10 random experiments for every set of parameters and plot the median, the 25\% and the 10\% percentiles. For the identity matrices, we perform 5 random experiments and plot the minimum achieved error. The results in Fig.~\ref{fig:maximum_varying_r} suggest that the approximation error decays as $1 / \sqrt{r}$ for random orthogonal matrices and as $1 /r$ for the identity matrices.

\begin{figure}[h!]
\centering
	\includegraphics[width=0.9\textwidth]{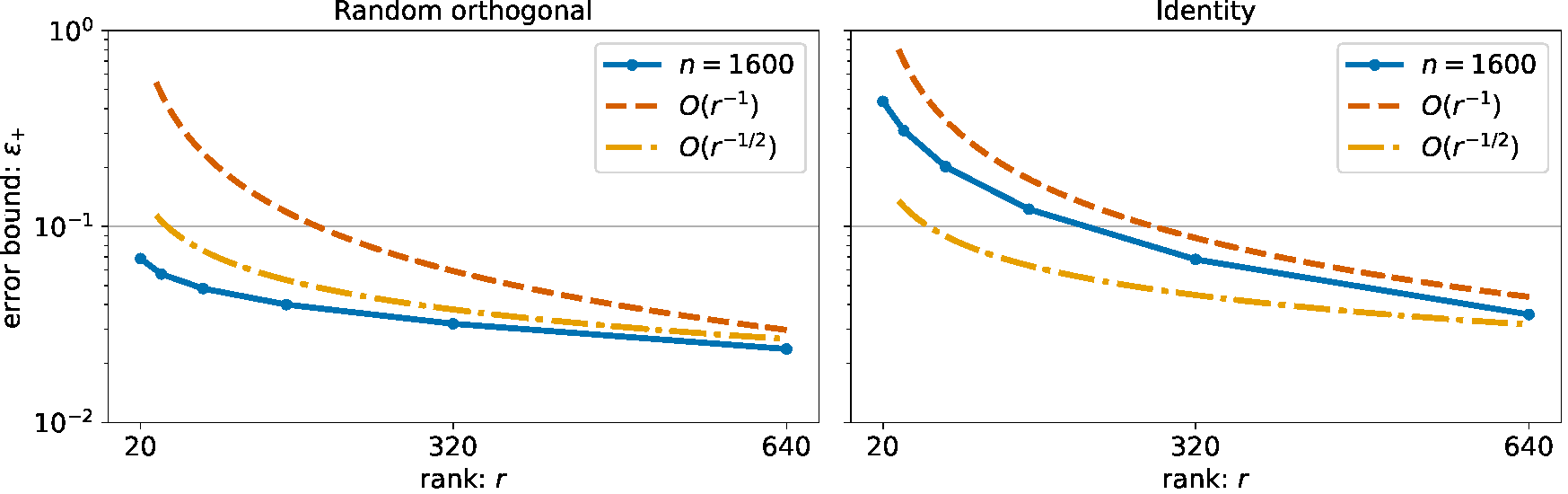}
\caption{Errors of rank-$r$ approximation in the maximum norm achieved with \textsc{ap-svd} for $1600 \times 1600$ matrices.}
\label{fig:maximum_varying_r}
\end{figure}

In the second series of experiments, we fix the rank $r = 40$ and consider matrices of different sizes. In Fig.~\ref{fig:maximum_varying_n}, we show how the approximation error achieved by \textsc{ap-rsvd} depends on the matrix size $n$. We observe that the error \emph{decays} as $\log(n) / \sqrt{n}$ for random orthogonal matrices. Meanwhile, the approximation errors for the identity matrices grow as $\log^2(n)$. 

\begin{figure}[h!]
\centering
	\includegraphics[width=0.9\textwidth]{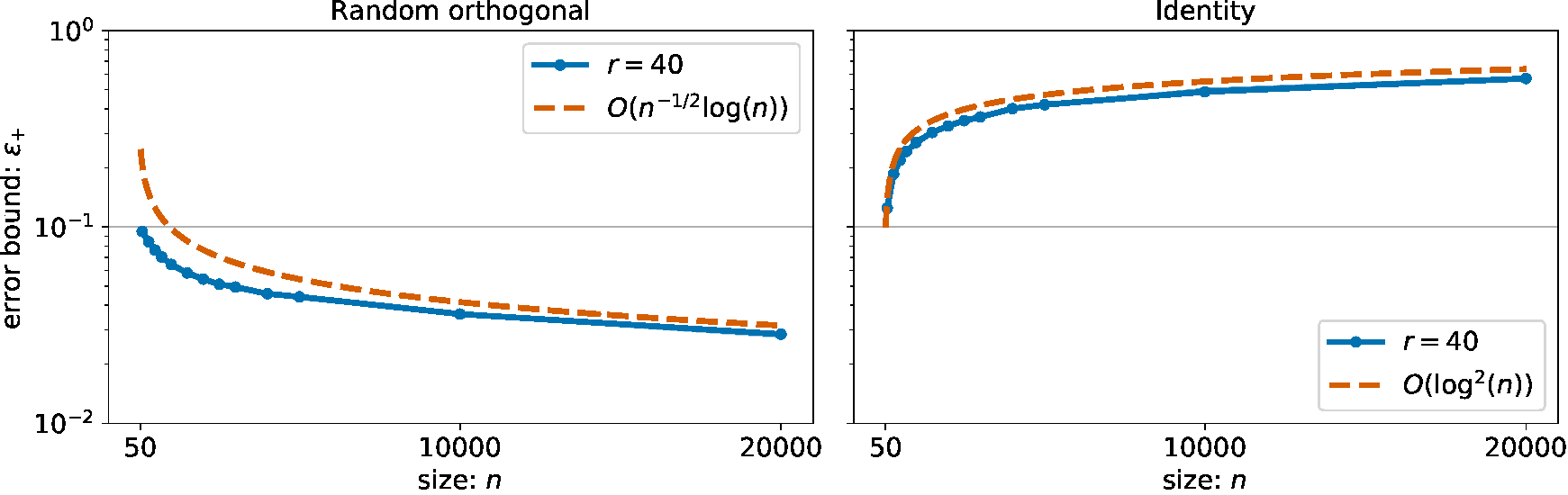}
\caption{Errors of rank-$40$ approximation in the maximum norm achieved with \textsc{ap-rsvd} for $n \times n$ matrices.}
\label{fig:maximum_varying_n}
\end{figure}

The two classes of orthogonal matrices exhibit qualitatively different behaviour. Both have spectral norms equal to one for every $n$, but their maximum norms are truly distinct. For the identity matrices, the maximum norm is also always equal to one. The maximum norm of an $n \times n$ random orthogonal matrix $\Matrix{X}$, upscaled by $\sqrt{n / \log(n)}$, converges in probability to 2 as $n \to \infty$ \cite{jiang2005maxima}. Thus, the smallest maximum-norm approximation error decays to zero since $\inf_{\Matrix{Y} \in \mathcal{M}_r} \Norm{\Matrix{X} - \Matrix{Y}}{\max} \leq \Norm{\Matrix{X}}{\max}$.

More numerical experiments are required to let us better understand how the above ultimate upper bound and the analytical bound of \cite{srebro2005rank} are related.\footnote{Since the first publication of our article, such experiments have been carried out in \cite{budzinskiy2024distance, budzinskiy2025entrywise, budzinskiy2025big}.}

\subsubsection{Quantised entries}
The maximum norm is also useful in approximating matrices and tensors, whose entries are stored in the fixed-point format. Every tensor with quantised entries can be seen as a rounded representation of a real-valued tensor, and our goal is to find one that is low-rank. Low-rank approximation of integer-valued matrices was considered in \cite{gillis2019low}, where a block-coordinate descent method was used to approximate $n \times n$ matrices that are formed as products of $n \times r$ standard Gaussian matrices and then rounded entrywise to the nearest integer. When $n = 200$ and the truncated SVD is chosen as the initial approximation, this method works in more than $90\%$ of random instances for $r \in \{ 1, 20 \}$, but never succeeds for $r \in \{ 2, 5, 10 \}$.

We repeat these experiments using the AP and set the initial value of $\varepsilon_{+}$ to $1/2$. The statistics over 20 random experiments are collected in Tab.~\ref{tab:tensor_integer}. Our approach succeeds every time for $r \in \{ 5, 10, 20 \}$ and achieves better approximation for $r = 20$ than \cite{gillis2019low}.

Next, we consider the case of $100 \times 100 \times 100$ tensors that are formed as TT decompositions of rank $\bm{r} = (r,r)$ with standard Gaussian random TT cores and rounded entrywise to the nearest integer. We apply \textsc{ap-ttsvd} to 20 random tensors, initialised each time with a \textsc{ttsvd}-approximation of rank $\bm{r}$ (see Tab.~\ref{tab:tensor_integer}). The AP succeed for $r \in \{10, 20\}$, but fail for $r = 5$. Similar to \cite{gillis2019low}, we observe that the higher the rank, the `easier' the problem is to solve and the lower the achievable errors are.

\begin{table}[ht!]
    \centering
    \caption{Approximation errors in the maximum norm for low-rank approximation of integer-valued matrices and tensors, achieved with alternating projections.}
    \label{tab:tensor_integer}
    \begin{tabular*}{\textwidth}{@{\extracolsep\fill}lcccccc@{}}
        \toprule
        & \multicolumn{3}{@{}c@{}}{Matrix} & \multicolumn{3}{@{}c@{}}{Tensor} \\\cmidrule{2-4}\cmidrule{5-7}%
        Rank & Minimum & Mean & Maximum & Minimum & Mean & Maximum \\
        \midrule
        5 & $0.491$ & $0.494$ & $0.498$ & $0.5007$ & $0.5011$ & $0.5017$ \\
        10 & $0.468$ & $0.473$ & $0.482$ & $0.4979$ & $0.4988$ & $0.4998$ \\
        20 & $0.433$ & $0.438$ & $0.444$ & $0.4851$ & $0.4859$ & $0.4876$ \\
        \bottomrule
    \end{tabular*}  
\end{table}
\section{Conclusion}
\label{sec:conclusion}
The main goal of the article was to shed more light on why the simple and versatile alternating projections work so well in practice, where the numerically computed metric projections are never exact. With low-rank approximation of matrices and higher-order tensors in mind as the model problem, we developed a convergence theory that explains how the benign local geometry at the intersection of two sets can compensate for the wildly inexact nature of certain approximate projections. Our theory can, in turn, pave the way for the use of alternating projections in a wider range of applications. This point is illustrated with non-trivial numerical results that we obtained for regularised matrix completion and low-rank approximation in the maximum norm.

An important question is whether the quasioptimal metric projection and the normal-cone condition \eqref{eq:inexact_angle} impose equivalent constraints; this is true for hyperplanes, but how regular does the set need to be for this equivalence to hold? Among the possible extensions of this work are (i)~the analysis of  convergence under weaker regularity assumptions as in \cite{bauschke2013restricted, noll2016local} and (ii)~the use of quasioptimal metric projections in the Douglas--Rachford algorithm \cite{phan2016linear, aragon2020douglas}.

\section*{Acknowledgements}
I am grateful to Sergey Matveev for our pleasant discussions about the topic: it is together that we came up with the idea of using cross approximation in low-rank alternating projections and saw its potential for regularising various iterative low-rank solvers. I thank Vladimir Kazeev and Dmitry Kharitonov for reading the early version of the text and suggesting improvements.

\begin{appendices}
    \section{Pythagorean property}
\label{appendix:basp}

Here, we refine Lemma~\ref{lemma:basp_superregular} by considering sets that are more regular than super-regular. A non-empty set $A \subseteq H$ is called \emph{prox-regular} at $\hat{a} \in A$ if there exist $t > 0$ and $\delta > 0$ such that $A \cap \ballrx{\delta}{\hat{a}}$ is closed in $\ballrx{\delta}{\hat{a}}$ and 
\begin{equation*}
    \dotp{v}{a' - a} \leq \tfrac{t}{2} \norm{a' - a}^2 \quad \text{for all} \quad a,a' \in A \cap \ballrx{\delta}{\hat{a}} \quad \text{and} \quad v \in \NormalLim{A}{a} \cap \ballrx{\delta}{0}.
\end{equation*}
What we take as the definition of prox-regularity is proved in \cite[Proposition 1.2]{poliquin2000local} to be equivalent to the original definition proposed in \cite{poliquin1996prox}. The class of prox-regular sets is large and includes smooth manifolds, convex sets, and super-regular sets. The following lemma is an analogue of Lemma~\ref{lemma:superregular_proj_nonexpansove} on the non-expansiveness of the projection.

\begin{lemma}
\label{lemma:proxregular_proj_nonexpansive}
Let $A \subseteq H$ be non-empty and prox-regular at $\hat{a} \in A$. There exist $t > 0$ and $\delta > 0$ such that for each $s \in (0, \min\{2,t\})$ and $x \in \ballrx{s\delta/2t}{\hat{a}}$ it holds that $\Proj{A}{x} = \{ \pi \}$ and
\begin{equation*}
    \norm{\pi - a} \leq \tfrac{2}{2 - s} \norm{x - a} \quad \text{for every} \quad a \in A \cap \ballrx{s\delta / 2t}{\hat{a}}.
\end{equation*}
\end{lemma}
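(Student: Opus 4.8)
The plan is to follow the proof of Lemma~\ref{lemma:superregular_proj_nonexpansove} almost verbatim, substituting the prox-regularity inequality for the super-regularity one and paying attention to the radii. First I would extract $t > 0$ and $\delta > 0$ from the definition of prox-regularity at $\hat a$, so that $A \cap \ballrx{\delta}{\hat a}$ is closed in $\ballrx{\delta}{\hat a}$; by Lemma~\ref{lemma:prox_regular_equivalent} this is equivalent to the scaled inequality
\[
    \dotp{v}{a' - a} \leq \tfrac{t}{2\delta} \norm{v} \cdot \norm{a' - a}^2 \quad \text{for all} \quad a, a' \in A \cap \ballrx{\delta}{\hat a}, \quad v \in \NormalLim{A}{a},
\]
which is the form I would use throughout. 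Fixing $s \in (0, \min\{2, t\})$ and $x \in \ballrx{s\delta/2t}{\hat a}$, the condition $s < t$ gives $s\delta/2t < \delta/2$, so Lemma~\ref{lemma:locally_closed_local_proj} yields $\Proj{A}{x} \neq \emptyset$ with $\norm{\pi - \hat a} \leq 2\norm{x - \hat a} < s\delta/t < \delta$ for every $\pi \in \Proj{A}{x}$; hence every such $\pi$ lies in $A \cap \ballrx{\delta}{\hat a}$, and $x - \pi \in \NormalProx{A}{\pi} \subseteq \NormalLim{A}{\pi}$ with $\norm{x - \pi} = \dist{x}{A} \leq \norm{x - \hat a} < s\delta/2t$.

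For uniqueness I would take $\pi_1, \pi_2 \in \Proj{A}{x}$ and apply the scaled inequality twice, with $(a, a', v) = (\pi_1, \pi_2, x - \pi_1)$ and $(a, a', v) = (\pi_2, \pi_1, x - \pi_2)$. Using $\norm{x - \pi_i} < s\delta/2t$ each term is at most $\tfrac{s}{4}\norm{\pi_1 - \pi_2}^2$, and adding the two inequalities collapses the left-hand side to $\norm{\pi_1 - \pi_2}^2$, so $\norm{\pi_1 - \pi_2}^2 \leq \tfrac{s}{2}\norm{\pi_1 - \pi_2}^2$; since $s < 2$ this forces $\pi_1 = \pi_2$. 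Write $\pi$ for the unique projection.

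For the quasi-nonexpansiveness bound I would expand, just as in Lemma~\ref{lemma:superregular_proj_nonexpansove},
\[
    \norm{\pi - a}^2 = \dotp{x - \pi}{a - \pi} + \dotp{x - a}{\pi - a}
\]
for $a \in A \cap \ballrx{s\delta/2t}{\hat a}$. The second term is at most $\norm{x - a}\norm{\pi - a}$ by Cauchy--Schwarz. For the first term I would invoke the scaled inequality with $(a, a', v) = (\pi, a, x - \pi)$ — admissible since $\pi, a \in A \cap \ballrx{\delta}{\hat a}$ and $x - \pi \in \NormalLim{A}{\pi}$ — and then bound the normal via $\norm{x - \pi} = \dist{x}{A} \leq \norm{x - a}$ together with $\norm{x - a} \leq \norm{x - \hat a} + \norm{a - \hat a} < s\delta/t$, which gives $\dotp{x - \pi}{a - \pi} \leq \tfrac{t}{2\delta}\cdot\tfrac{s\delta}{t}\norm{a - \pi}^2 = \tfrac{s}{2}\norm{\pi - a}^2$. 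Collecting the two estimates yields $(1 - \tfrac{s}{2})\norm{\pi - a}^2 \leq \norm{x - a}\norm{\pi - a}$; the claim is trivial if $a = \pi$, and otherwise dividing by $\norm{\pi - a}$ and by $1 - s/2 > 0$ gives $\norm{\pi - a} \leq \tfrac{2}{2 - s}\norm{x - a}$.

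The algebra here is routine; the part that needs care is the radius bookkeeping, and this is where the seemingly arbitrary constants earn their keep. The choice of radius $s\delta/2t$ and the restriction $s < \min\{2, t\}$ are tuned precisely so that (i) $\pi$ and $a$ stay inside $\ballrx{\delta}{\hat a}$, where the prox-regularity inequality is valid; (ii) the normal $x - \pi$ is small enough that the scaled inequality contributes a coefficient below $1/2$, which survives the doubling in the uniqueness step; and (iii) after absorbing the extra triangle-inequality factor $\norm{x - a} < s\delta/t$ in the nonexpansiveness step, the coefficient is $s/2 < 1$, leaving the positive quantity $1 - s/2$ to divide by. If one prefers a sharper constant, bounding $\norm{x - \pi} \leq \norm{x - \hat a}$ rather than $\norm{x - \pi} \leq \norm{x - a}$ in the last step replaces $\tfrac{2}{2-s}$ by $\tfrac{4}{4-s}$; I would keep the stated form for a cleaner parallel with Lemma~\ref{lemma:superregular_proj_nonexpansove}.
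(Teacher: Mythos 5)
Your proposal is correct and follows essentially the same route as the paper: the quasi-nonexpansiveness step is identical (same expansion of $\norm{\pi-a}^2$, same use of the scaled prox-regularity inequality from Lemma~\ref{lemma:prox_regular_equivalent}, same bound $\norm{x-\pi}\leq\norm{x-a}<s\delta/t$ yielding the factor $1-s/2$). The only cosmetic difference is in the uniqueness step, where you symmetrise by applying the prox-regularity inequality twice and adding, while the paper expands $\norm{\pi_1-x}^2$ via the law of cosines and uses $\norm{\pi_1-x}=\norm{\pi_2-x}$ once--both are equally valid.
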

\begin{proof}
See \cite[Proposition~3.1]{poliquin2000local}.
\end{proof}

\begin{lemma}
\label{lemma:basp_proxregular}
Let $A \subseteq H$ be non-empty and prox-regular at $\hat{a} \in A$. There exist $t > 0$ and $\delta > 0$ such that for each $x \in \ballrx{s\delta/2t}{\hat{a}} \setminus A$ with $s \in (0, \min\{2,t\})$ it holds that
\begin{equation*}
        \dotp[\bigg]{\frac{x - \pi}{\norm{x - \pi}}}{\frac{x - a}{\norm{x - a}}} \geq \frac{\norm{x - \pi}}{\norm{x - a}} - \frac{2s}{(2 - s)^2} 
\end{equation*}
for $\Proj{A}{x} = \{ \pi \}$ and every $a \in A \cap \ballrx{s\delta/2t}{\hat{a}}$.
\end{lemma}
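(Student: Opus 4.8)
The plan is to follow the proof of Lemma~\ref{lemma:basp_superregular} step for step, substituting the super-regularity bound \eqref{eq:sr_normal_angle} by the quantitative prox-regularity estimate from the second item of Lemma~\ref{lemma:prox_regular_equivalent}, and replacing the appeal to Lemma~\ref{lemma:superregular_proj_nonexpansove} by its prox-regular analogue Lemma~\ref{lemma:proxregular_proj_nonexpansive}.

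First I would fix $t > 0$ and $\delta > 0$ witnessing prox-regularity of $A$ at $\hat{a}$; after shrinking them if necessary, both Lemma~\ref{lemma:prox_regular_equivalent} and Lemma~\ref{lemma:proxregular_proj_nonexpansive} hold with these constants. Fix $s \in (0, \min\{2,t\})$ and take $x \in \ballrx{s\delta/2t}{\hat{a}} \setminus A$. By Lemma~\ref{lemma:proxregular_proj_nonexpansive} the optimal metric projection is the singleton $\Proj{A}{x} = \{\pi\}$, and Lemma~\ref{lemma:locally_closed_local_proj} gives $\norm{\pi - \hat{a}} \leq 2\norm{x - \hat{a}} < s\delta/t < \delta$, so $\pi \in A \cap \ballrx{\delta}{\hat{a}}$ and $x - \pi \in \NormalLim{A}{\pi}$.

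Then, for $a \in A \cap \ballrx{s\delta/2t}{\hat{a}}$ with $a \neq \pi$ (the case $a = \pi$ being trivial), I expand
\begin{equation*}
    \norm{x - \pi}^2 = \dotp{x - \pi}{x - a} + \dotp{x - \pi}{a - \pi}.
\end{equation*}
Applying the second property of Lemma~\ref{lemma:prox_regular_equivalent} with $v = x - \pi \in \NormalLim{A}{\pi}$ bounds the last term by $\tfrac{t}{2\delta}\norm{x-\pi}\norm{a-\pi}^2$, hence $\dotp{x-\pi}{x-a} \geq \norm{x-\pi}\big(\norm{x-\pi} - \tfrac{t}{2\delta}\norm{a-\pi}^2\big)$. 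Lemma~\ref{lemma:proxregular_proj_nonexpansive} yields $\norm{a-\pi}^2 \leq \tfrac{4}{(2-s)^2}\norm{x-a}^2$, so after dividing by $\norm{x-\pi}\norm{x-a}$ one gets
\begin{equation*}
    \dotp[\bigg]{\frac{x - \pi}{\norm{x - \pi}}}{\frac{x - a}{\norm{x - a}}} \geq \frac{\norm{x - \pi}}{\norm{x - a}} - \frac{2t}{\delta(2-s)^2}\norm{x - a}.
\end{equation*}
It remains to note that $\norm{x - a} \leq \norm{x - \hat{a}} + \norm{\hat{a} - a} < s\delta/t$, which turns the error term into exactly $2s/(2-s)^2$.

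The only real work here is the bookkeeping with the radii: one must check that $\pi$ falls inside the ball where the prox-regular inequality is valid and that $a$ and $\pi$ both lie in the ball where Lemma~\ref{lemma:proxregular_proj_nonexpansive} controls $\norm{a - \pi}$, and that the final substitution produces precisely the constant $2s/(2-s)^2$ rather than a slightly different one. There is no conceptual obstacle beyond that; the resulting estimate improves on the super-regular one when $s$ is small, since the deficit now scales linearly in $s$ instead of being a fixed quantity $\varepsilon(1+\varepsilon)$.
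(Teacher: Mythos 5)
Your proposal is correct and follows exactly the route the paper takes: its proof of this lemma is literally the instruction to repeat the argument of Lemma~\ref{lemma:basp_superregular}, replacing the super-regularity bound by the quantitative prox-regular inequality of Lemma~\ref{lemma:prox_regular_equivalent} and Lemma~\ref{lemma:superregular_proj_nonexpansove} by Lemma~\ref{lemma:proxregular_proj_nonexpansive}. Your radius bookkeeping and the final substitution $\norm{x-a} < s\delta/t$ yielding the constant $2s/(2-s)^2$ are all accurate.
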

\begin{proof}
    Repeat the proof of Lemma~\ref{lemma:basp_superregular} and use Lemma~\ref{lemma:proxregular_proj_nonexpansive} instead of Lemma~\ref{lemma:superregular_proj_nonexpansove}.
\end{proof}

Adding more regularity, we can obtain even tighter estimates. A non-empty set $A \subseteq H$ is called \emph{locally convex} at $\hat{a} \in A$ if there exists a neighbourhood $\mathcal{V}$ of $\hat{a}$ such that $A \cap \mathcal{V}$ is convex. It is known \cite[Proposition~1.5]{mordukhovich2006variational} that in this case 
\begin{equation*}
    \NormalLim{A}{\hat{a}} = \set{v \in H}{\dotp{v}{a' - \hat{a}} \leq 0 \quad \text{for all} \quad a' \in A \cap \mathcal{V}}.
\end{equation*}

\begin{lemma}
\label{lemma:basp_convex}
Let $A \subseteq H$ be non-empty, locally convex and locally closed at $\hat{a} \in A$. There exists $\delta > 0$ such that for each $x \in \ballrx{\delta/2}{\hat{a}} \setminus A$ it holds that
\begin{equation*}
        \dotp[\bigg]{\frac{x - \pi}{\norm{x - \pi}}}{\frac{x - a}{\norm{x - a}}} \geq \frac{\norm{x - \pi}}{\norm{x - a}}
\end{equation*}
for $\Proj{A}{x} = \{ \pi \}$ and every $a \in A \cap \ballrx{\delta}{\hat{a}}$.
\end{lemma}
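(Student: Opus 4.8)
The plan is to mimic the proof of Lemma~\ref{lemma:basp_superregular}, exploiting that local convexity makes the relevant normal-cone inequality hold with \emph{zero} slack. First I would fix a neighbourhood $\mathcal{V}$ of $\hat{a}$ on which $A \cap \mathcal{V}$ is convex, and choose $\delta > 0$ small enough that $\ballrx{\delta}{\hat{a}} \subseteq \mathcal{V}$ and $A \cap \ballrx{\delta}{\hat{a}}$ is closed in $\ballrx{\delta}{\hat{a}}$. For $x \in \ballrx{\delta/2}{\hat{a}}$, Lemma~\ref{lemma:locally_closed_local_proj} then guarantees that $\Proj{A}{x}$ is non-empty and that every $\pi \in \Proj{A}{x}$ satisfies $\norm{\pi - \hat{a}} \leq 2 \norm{x - \hat{a}} < \delta$, so $\pi \in A \cap \ballrx{\delta}{\hat{a}} \subseteq A \cap \mathcal{V}$. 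Uniqueness of the projection follows from convexity in the usual way: if $\pi_1 \neq \pi_2$ were both optimal, their midpoint would lie in $A \cap \mathcal{V}$ and, by the parallelogram identity, be strictly closer to $x$ than $\pi_1$ and $\pi_2$, a contradiction.

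Next I would invoke the variational characterisation of the limiting normal cone for locally convex sets recalled just before the statement: since $x - \pi \in \NormalProx{A}{\pi} \subseteq \NormalLim{A}{\pi}$ and $\pi \in A \cap \mathcal{V}$, we have $\dotp{x - \pi}{a - \pi} \leq 0$ for every $a \in A \cap \mathcal{V}$, in particular for every $a \in A \cap \ballrx{\delta}{\hat{a}}$. Then I would expand
\begin{equation*}
    \norm{x - \pi}^2 = \dotp{x - \pi}{x - a} + \dotp{x - \pi}{a - \pi} \leq \dotp{x - \pi}{x - a}.
\end{equation*}
Because $x \notin A$ while $\pi, a \in A$, both $\norm{x - \pi}$ and $\norm{x - a}$ are positive, so dividing by $\norm{x - \pi}\,\norm{x - a}$ yields exactly the claimed inequality.

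There is essentially no serious obstacle here: this is the tightest and simplest of the three Pythagorean-type estimates, since convexity removes the $\varepsilon$-slack present in the super-regular case (Lemma~\ref{lemma:basp_superregular}) and the quadratic-in-distance slack of the prox-regular case (Lemma~\ref{lemma:basp_proxregular}). The only point needing a little care is the bookkeeping with neighbourhoods — one must make sure that \emph{both} $\pi$ and $a$ land inside the convex patch $\mathcal{V}$ so that the normal-cone formula applies — and this is handled by the factor-of-two bound from Lemma~\ref{lemma:locally_closed_local_proj} together with the restrictions $x \in \ballrx{\delta/2}{\hat{a}}$ and $a \in A \cap \ballrx{\delta}{\hat{a}}$.
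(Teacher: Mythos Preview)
Your proposal is correct and follows essentially the same approach as the paper: choose $\delta$ so that $A\cap\ballrx{\delta}{\hat a}$ is convex and closed, use Lemma~\ref{lemma:locally_closed_local_proj} to locate the (unique) projection $\pi$ inside this convex patch, apply the convex normal-cone inequality $\dotp{x-\pi}{a-\pi}\le 0$, and rearrange. The paper's proof is terser (it simply notes that $\Proj{A}{x}=\Proj{A\cap\ballrx{\delta}{\hat a}}{x}$ and invokes the convex normal-cone characterisation), but your more explicit version is the same argument spelled out.
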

\begin{proof}
Pick $\delta > 0$ such that $A \cap \ballrx{\delta}{\hat{a}}$ is convex and closed in $\ballrx{\delta}{\hat{a}}$. Then use the above formula for $\NormalLim{A}{\hat{a}}$ and note that $\Proj{A}{x} = \Proj{A \cap \ballrx{\delta}{\hat{a}}}{x}$ for $x \in \ballrx{\delta/2}{\hat{a}}$. Since $A \cap \ballrx{\delta}{\hat{a}}$ is convex, there can be at most one optimal metric projection onto it.
\end{proof}

\section{Proof details}
\label{appendix:implicit_function_theorem}

\begin{proof}[Proof of Lemma~\ref{lemma:locally_closed_local_proj}]
$\bm{1.}$ Let $x \in \mathcal{V}$ be a limit point of $A \cap \mathcal{V}$. Then $\dist{x}{A} = 0$ and there exists a sequence $(x_n) \subset A \cap \mathcal{V}$ that converges to $x$. By assumption, there is a point $a \in A$ such that $\dist{x}{a} = \dist{x}{A} = 0$. It follows that $x = a \in A$, but $x$ itself lies in $\mathcal{V}$ so $A \cap \mathcal{V}$ is closed in $\mathcal{V}$.

$\bm{2.}$ Pick $x \in \ballrx{r/2}{\hat{a}}$ and note that $\dist{x}{A} \leq \norm{x - \hat{a}} < r/2$. By definition, there is a minimising sequence $(a_n) \subset A$ such that $\norm{x - a_n} \to \dist{x}{A}$ as $n \to \infty$. Then
\begin{equation*}
    \limsup_{n \to \infty} \norm{\hat{a} - a_n} \leq \norm{x - \hat{a}} + \lim_{n \to \infty} \norm{x - a_n} < \tfrac{r}{2} + \dist{x}{A} < r.
\end{equation*}
By the Bolzano--Weierstrass theorem, there is a convergent subsequence $(a_{n_k}) \subset \ballrx{r}{\hat{a}}$. Since $A \cap \ballrx{r}{\hat{a}}$ is closed in $\ballrx{r}{\hat{a}}$, we have $\lim_{k \to \infty} a_{n_k} = a \in A \cap \ballrx{r}{\hat{a}}$. By continuity,
\begin{equation*}
    \norm{x - a} = \lim_{k \to \infty} \norm{x - a_{n_k}} = \dist{x}{A}
\end{equation*}
and $a \in \Proj{A}{x}$. Finally, for every projection $\pi \in \Proj{A}{x}$, we have
\begin{equation*}
    \norm{\pi - \hat{a}} \leq \norm{x - \pi} + \norm{x - \hat{a}} \leq 2 \norm{x - \hat{a}} < r. \qedhere
\end{equation*}
\end{proof}

\begin{proof}[Proof of Theorem~\ref{theorem:2qoptimal_convergence}: Implicit function theorem]
Consider a function
\begin{equation*}
    f(\varepsilon, \alpha, \beta) = 2 \varepsilon + \alpha \left(\tfrac{1}{\beta} - \varepsilon(1 + \varepsilon)\right) + \sqrt{1 - \alpha^2} \sqrt{1 - \left(\tfrac{1}{\beta} - \varepsilon(1 + \varepsilon)\right)^2}
\end{equation*}
and its partial derivatives
\begin{gather*}
    \frac{\partial f}{\partial \varepsilon} = 2 - \alpha(1 + 2 \varepsilon) + \sqrt{1 - \alpha^2} \frac{(1 + 2 \varepsilon)\left(\tfrac{1}{\beta} - \varepsilon(1 + \varepsilon)\right)}{\sqrt{1 - \left(\tfrac{1}{\beta} - \varepsilon(1 + \varepsilon)\right)^2}}, \\
    \frac{\partial f}{\partial \alpha} = \left(\tfrac{1}{\beta} - \varepsilon(1 + \varepsilon)\right) - \frac{\alpha}{\sqrt{1 - \alpha^2}} \sqrt{1 - \left(\tfrac{1}{\beta} - \varepsilon(1 + \varepsilon)\right)^2}.
\end{gather*}
Let $\hat{\alpha} \in (0,1)$, $\hat{\beta} > 1$, and put $\varepsilon = 0$. Then 
\begin{gather*}
    \frac{\partial f}{\partial \varepsilon}(0, \hat{\alpha}, \hat{\beta}) = 2 - \hat{\alpha} + \frac{\sqrt{1 - \hat{\alpha}^2}}{\sqrt{\hat{\beta}^2 - 1}} > 0, \quad
    \frac{\partial f}{\partial \alpha}(0, \hat{\alpha}, \hat{\beta}) = \frac{1}{\hat{\beta}} \left( 1 - \frac{\hat{\alpha}}{\sqrt{1 - \hat{\alpha}^2}} \sqrt{\hat{\beta}^2 - 1} \right).
\end{gather*}
We can apply the implicit function theorem to show that there is a $C^\infty$ function $\varepsilon(\alpha)$, defined in the neighbourhood of $\hat{\alpha}$, such that $f(\varepsilon(\alpha), \alpha, \hat{\beta}) = f(0, \hat{\alpha}, \hat{\beta})$ and 
\begin{equation*}
    \varepsilon(\hat{\alpha}) = 0, \quad \frac{d \varepsilon}{d \alpha}(\hat{\alpha}) = - \frac{\frac{\partial f}{\partial \alpha}(0, \hat{\alpha}, \hat{\beta})}{\frac{\partial f}{\partial \varepsilon}(0, \hat{\alpha}, \hat{\beta})}.
\end{equation*}
In addition, since $\alpha \mapsto \tfrac{\alpha}{\sqrt{1 - \alpha^2}}$ is strictly increasing, the derivative $\tfrac{\partial f}{\partial \alpha}(0, \hat{\alpha}, \hat{\beta})$ is positive if and only if $\hat{\alpha} \hat{\beta} < 1$. Thus, $\varepsilon(\alpha)$ is strictly decreasing if and only if $\hat{\alpha} \hat{\beta} < 1$.
\end{proof}    
\end{appendices}

\printbibliography
\end{refsection}

\begin{refsection}
\newpage
\renewcommand{\appendixtocname}{Supplementary materials}
\renewcommand{\appendixpagename}{Supplementary materials}
\begin{appendices}
    \section{Quasioptimal low-rank approximation: Details}
\label{appendix:lowrank}
\subsection{Randomised SVD}
The standard (and optimal in every unitarily invariant norm) way of computing a rank-$r$ approximation of a matrix is via the truncated SVD, which requires $\mathcal{O}(mn \cdot \min\{m,n\})$ operations. More efficient low-rank approximation algorithms of complexity $\mathcal{O}(\mathrm{nnz}(\Matrix{X}) \cdot r)$ can be achieved with randomisation, where $\mathrm{nnz}(\Matrix{X})$ is the number of non-zero entries of $\Matrix{X}$. One of the classics is the \emph{randomised range finder} \cite[Algorithm~4.1]{halko2011finding}: fix an oversampling parameter $p \geq 2$, generate an $n \times (r + p)$ random Gaussian matrix $\Omega$ and compute the QR decomposition of the product $\Matrix{QR} = \Matrix{X} \Omega$. It is then proved that \cite[Theorem~10.5]{halko2011finding}
\begin{equation*}
    \Expectation \Norm{\Matrix{X} - \Matrix{Q} \Matrix{Q}^\intercal \Matrix{X}}{F} \leq \sqrt{1 + \frac{r}{p-1}} \cdot \dist{\Matrix{X}}{\mathcal{M}_r}.
\end{equation*}
For $p = r + 1$ the estimate turns into $\Expectation \Norm{\Matrix{X} - \Matrix{Q} \Matrix{Q}^\intercal \Matrix{X}}{F} \leq \sqrt{2} \cdot \dist{\Matrix{X}}{\mathcal{M}_r}$. Note, however, that the matrix $\Matrix{Q} \Matrix{Q}^\intercal \Matrix{X}$ is of rank $r + p$ rather than $r$. To get a rank-$r$ approximation, we can compute the truncated SVD of $\Matrix{Q}^\intercal \Matrix{X}$. The error of this randomised SVD was analysed in \cite[Theorem~5.7]{gu2015subspace} for a modification of the randomised range finder that uses $q$ steps of power iterations: that is, the QR decomposition is computed for $(\Matrix{X} \Matrix{X}^\intercal)^q \Matrix{X} \Omega$. We present an expected approximation error that is less tight than the one in \cite{gu2015subspace}, but more suitable for our needs:
\begin{equation*}
    \Expectation \Norm{\Matrix{X} - \Matrix{Q} \cdot \Proj{\mathcal{M}_r}{\Matrix{Q}^\intercal \Matrix{X}}}{F} \leq \sqrt{1 + 144 e^2 (n-r) \frac{r(r+p)}{(p+1)^2} \left( \frac{\sigma_{r+1}}{\sigma_r} \right)^{4q}} \cdot \dist{\Matrix{X}}{\mathcal{M}_r}.
\end{equation*}
Here, $\sigma_{r+1} \leq \sigma_{r}$ are the singular values of $\Matrix{X}$. For $p = r - 1$ the estimate simplifies to
\begin{equation*}
    \Expectation \Norm{\Matrix{X} - \Matrix{Q} \cdot \Proj{\mathcal{M}_r}{\Matrix{Q}^\intercal \Matrix{X}}}{F} \leq \sqrt{1 + 288 e^2 (n-r) \left( \frac{\sigma_{r+1}}{\sigma_r} \right)^{4q}} \cdot \dist{\Matrix{X}}{\mathcal{M}_r}.
\end{equation*}
Depending on the spectral gap and the number of power iterations, the randomised SVD can be expected to compute arbitrarily good quasioptimal projections onto $\mathcal{M}_r$. Note that the complexity of the algorithm grows linearly with $q$.

\subsection{CUR and cross approximations}
An alternative way to construct low-rank approximations is to use the columns and rows of the matrix itself. Recall the interpolation property of rank-$r$ matrices:
\begin{equation*}
        \Matrix{X} = \Matrix{X}(:, J) \cdot \Matrix{X}(I, J)^{-1} \cdot \Matrix{X}(I, :).
\end{equation*}
The same idea can be used to compute a rank-$r$ approximation. Choose $k_c \geq r$ columns $\Matrix{C} = \Matrix{X}(:, J)$, $k_r \geq r$ rows $\Matrix{R} = \Matrix{X}(I,:)$, and a generator matrix $\Matrix{U} \in \Real^{k_c \times k_r}$. The following is called a CUR approximation:
\begin{equation*}
    \Matrix{X} \approx \Matrix{CUR}.
\end{equation*}
When the columns and rows are fixed, the optimal choice of the generator is \cite{stewart1999four}
\begin{equation*}
    \mathrm{argmin}_{\Matrix{U}} \Norm{\Matrix{X} - \Matrix{C U R}}{F} = \Matrix{C}^\dagger \Matrix{X} \Matrix{R}^\dagger,
\end{equation*}
where $\dagger$ stands for the Moore--Penrose pseudoinverse. If $\rank{\Matrix{C}^\dagger \Matrix{X} \Matrix{R}^\dagger} = \rank{\Matrix{X}}$, we recover the interpolation identity, since $\Matrix{C}^\dagger \Matrix{X} \Matrix{R}^\dagger = \Matrix{X}(I,J)^{-1}$ (see \cite[Theorem~A]{hamm2020perspectives}); otherwise, the two generators are different. The CUR approximation with a non-optimal generator $\Matrix{X}(I,J)^{\dagger}$ is known as the cross approximation:
\begin{equation*}
    \Matrix{X} \approx \Matrix{C} \cdot \Matrix{X}(I,J)^{\dagger} \cdot \Matrix{R}.
\end{equation*}

The approximation error achieved by the CUR and cross approximations depends on the choice of the index sets $I$ and $J$. It is shown in \cite[Theorem~1.3]{deshpande2006matrix} that every matrix $\Matrix{X}$ with $\rank{\Matrix{X}} \geq r$ contains $r$ columns $\Matrix{C} = \Matrix{X}(:, J)$ such that
\begin{equation*}
    \Norm{\Matrix{X} - \Matrix{C} \Matrix{C}^{\dagger} \Matrix{X}}{F} \leq \sqrt{1 + r} \cdot \dist{\Matrix{X}}{\mathcal{M}_r}.
\end{equation*}
Similarly, there are $r$ rows $\Matrix{R} = \Matrix{X}(I, :)$ that achieve
\begin{equation*}
    \Norm{\Matrix{X} - \Matrix{X} \Matrix{R}^{\dagger} \Matrix{R}}{F} \leq \sqrt{1 + r} \cdot \dist{\Matrix{X}}{\mathcal{M}_r}.
\end{equation*}
This proves the existence of a $\sqrt{2 + 2r}$-quasioptimal CUR approximation:
\begin{equation*}
    \Norm{\Matrix{X} - \Matrix{C} \Matrix{C}^{\dagger} \Matrix{X} \Matrix{R}^{\dagger} \Matrix{R}}{F}^2 = \Norm{\Matrix{X} - \Matrix{C} \Matrix{C}^{\dagger} \Matrix{X}}{F}^2 + \Norm{\Matrix{C} \Matrix{C}^{\dagger} \Matrix{X} - \Matrix{C} \Matrix{C}^{\dagger} \Matrix{X} \Matrix{R}^{\dagger} \Matrix{R}}{F}^2 \leq 2(1+r) \cdot \dist{\Matrix{X}}{\mathcal{M}_r}^2.
\end{equation*}
An analogous statement holds for the cross approximation \cite{zamarashkin2018existence}: there are index sets $I$ and $J$ of cardinality $r$ such that
\begin{equation}
\label{eq:cross_qoptimal}
    \Norm{\Matrix{X} - \Matrix{C} \cdot \Matrix{X}(I,J)^{-1} \cdot \Matrix{R}}{F} \leq (1 + r) \cdot \dist{\Matrix{X}}{\mathcal{M}_r}. 
\end{equation}
In both cases, the corresponding rows and columns can be computed in polynomial time \cite{cortinovis2020low}, though the complexities of $\mathcal{O}(mnr \cdot \min\{m,n\})$ and $\mathcal{O}(mnr \cdot \min\{m^2,n^2\})$ operations, respectively, are higher than for the truncated SVD.

Better approximation can be achieved when we oversample the rows and columns. It is possible to construct a rank-$r$ CUR approximation based on $k_c = k_r = k > 4r$ rows and columns that satisfies \cite[Theorem~7.1]{boutsidis2014optimal}
\begin{equation*}
    \Norm{\Matrix{X} - \Matrix{C U R}}{F} \leq \sqrt{1 + 80 \frac{r}{k - 4r}} \cdot \dist{\Matrix{X}}{\mathcal{M}_r}.
\end{equation*}
Therefore, we can reduce the approximation errors by taking larger $k$. The deterministic algorithm proposed in \cite{boutsidis2014optimal} has the complexity $\mathcal{O}(mnk \cdot \min\{m^2,n^2\})$, which is similar to the cost of the cross approximation algorithm from \cite{cortinovis2020low}. It is also shown in \cite[Theorem~8.1]{boutsidis2014optimal} that this quasioptimality estimate captures the correct relationship between the error and the oversampling: if $\Norm{\Matrix{X} - \Matrix{C U R}}{F} \leq \sqrt{1 + \varepsilon} \cdot \dist{\Matrix{X}}{\mathcal{M}_r}$ then $k_r$ and $k_c$ must scale as $r / \varepsilon$.

\subsection{Maximum-volume principle}
While the randomised SVD does indeed have a smaller computational complexity, the CUR- and cross-approximation algorithms that we have mentioned so far are slower than the optimal projection via the SVD.

The sampling strategy is the bottleneck. To decide which columns and rows to pick, each of them is assigned a specific score. And even though only the $\mathcal{O}(r)$ top scorers are chosen, the scores themselves are typically computed based on the left and right singular vectors. Hence, the SVD is required.

A different way of choosing the index sets $I$ and $J$ is based on maximising the \emph{volume} of $\Matrix{X}(I, J)$. The idea originates in the interpolation theory and leads to error estimates in the maximum norm \cite[Corollary~2.3]{goreinov2001maximal}: if the $r \times r$ matrix $\Matrix{X}(I, J)$ satisfies
\begin{equation*}
    |\det \Matrix{X}(I, J)| \geq \nu \cdot \max_{\hat{I}, \hat{J}} |\det \Matrix{X}(\hat{I}, \hat{J})|
\end{equation*}
for some $\nu \leq 1$ then the error of cross approximation is bounded by
\begin{equation}
\label{eq:quasimaxvol}
    \Norm{\Matrix{X} - \Matrix{C} \cdot \Matrix{X}(I,J)^{-1} \cdot \Matrix{R}}{\max} \leq \frac{1 + r}{\nu} \cdot \sigma_{r+1}(\Matrix{X}).
\end{equation}
Oversampling leads to better estimates \cite[Theorem~7]{osinsky2018pseudo}. If $\Matrix{X}(I,J) \in \Real^{k_r \times k_c}$ maximises the $r$-projective volume (the product of the $r$ largest singular values) among all the $k_r \times k_c$ submatrices of $\Matrix{X}$ then
\begin{equation*}
    \Norm{\Matrix{X} - \Matrix{C} \cdot \Proj{\mathcal{M}_r}{\Matrix{X}(I,J)}^{\dagger} \cdot \Matrix{R}}{\max} \leq t(k_r, r) t(k_c,r) \cdot \sigma_{r+1}(\Matrix{X}),
\end{equation*}
where
\begin{equation*}
    t(k, r) = \sqrt{1 + \frac{r}{k - r + 1}}.
\end{equation*}
Similar bounds hold in the spectral norm too \cite{goreinov1997theory, osinsky2018pseudo, mikhalev2018rectangular}.

In brief, cross approximation driven by the maximisation of volume is a powerful tool. Does it produce quasioptimal low-rank projections, though? It is known that the bound \eqref{eq:cross_qoptimal} does not always hold for the submatrices of maximum volume \cite{zamarashkin2018existence}. Nevertheless, for a class of random matrices $\Matrix{X}$ it was proved in \cite{zamarashkin2021accuracy} that the cross approximation based on the submatrix with the largest (generalised) $r$-projective volume delivers, in expectation, a $t(k_r, r) t(k_c,r)$-quasioptimal projection of $\Matrix{X}$ onto $\mathcal{M}_r$.

Another quasioptimality result can be found in \cite[Corollary~5.7]{hamm2021perturbations}. When the row-indices $I$ and column-indices $J$ are selected to maximise the volume of the $k_r \times r$ submatrix of the $r$ dominant left singular vectors and of the $k_c \times r$ submatrix of the $r$ dominant right singular vectors, respectively, the error of the cross approximation can be estimated as
\begin{equation*}
    \Norm{\Matrix{X} - \Matrix{C} \cdot \Proj{\mathcal{M}_r}{\Matrix{X}(I,J)}^{\dagger} \cdot \Matrix{R}}{F} \leq \frac{3}{2}\Big[ 1 + t(k_r, r) + t(k_c,r) + 3 t(k_r, r) t(k_c,r) \Big] \cdot \dist{\Matrix{X}}{\mathcal{M}_r},
\end{equation*}
provided that the spectral gap is large $\sigma_{r}(\Matrix{X}) \geq 4 t(k_r, r) t(k_c,r) \cdot \dist{\Matrix{X}}{\mathcal{M}_r}$.

Thus, there is non-negligible support to treat cross approximation based on the maximum-volume principle as a quasioptimal low-rank projection. It remains to compute the corresponding index sets. The problem is NP-hard \cite{civril2009selecting}, but 
\begin{itemize}
    \item the bound \eqref{eq:quasimaxvol} from \cite{goreinov2001maximal} suggests that a submatrix, whose volume is within a factor from the largest, suffices for a good approximation, and
    \item many of the results, including the estimates in \cite{osinsky2018pseudo} and the proof of quasioptimality for a class of random matrices from \cite{zamarashkin2021accuracy}, hold for a submatrix of locally maximum volume, where `locally' means that it is allowed to change one column and one row.
\end{itemize}
The \textsc{maxvol} algorithm of \cite{goreinov2010find} and its generalisations to rectangular submatrices \cite{osinsky2018rectangular, mikhalev2018rectangular} leverage both points and find $k_r$ rows in an $m \times r$ matrix such that the volume of the corresponding submatrix is within a prescribed factor $\nu$ from the locally maximum volume. The algorithm is iterative: the initialisation step takes $\mathcal{O}(m (k_r + r^2))$ operations, the cost of a single iteration is $\mathcal{O}(m k_r)$ operations, and the number of iterations depends on $\nu$ and the initial guess of the rows.

This is the base of fast cross approximation. In the case of a square submatrix, $k_r = k_c = r$, we can update the row-indices $I$ and column-indices $J$ in an alternating fashion:
\begin{equation}
\label{eq:maxvol_iterations}
    I \gets \textsc{maxvol}(\Matrix{X}(:, J), I), \quad J \gets \textsc{maxvol}(\Matrix{X}(I, :)^T, J).
\end{equation}
This procedure has the complexity of 
\begin{equation*}
    \mathcal{O}\Big((m + n) r^2 \cdot N_{\mathrm{iterations}} + (m + n)r \cdot N_{\mathrm{interchanges}}\Big),    
\end{equation*}
where $N_{\mathrm{iterations}}$ is the number of times the updates \eqref{eq:maxvol_iterations} are carried out and $N_{\mathrm{interchanges}}$ is the overall number of row-interchanges performed by \textsc{maxvol}.

The analogue for maximising the $r$-projective volume using $k_r \geq r$ rows and $k_c \geq r$ columns was proposed in \cite{osinsky2018rectangular}. There are two main differences. First, it is suggested to optimise $I$ and $J$ separately. To this end, two auxiliary index sets $I_r$ and $J_r$ of cardinality $r$ are needed. For the given $I$ and $J_r$, the first update is done as in \eqref{eq:maxvol_iterations}:
\begin{equation*}
    I \gets \textsc{maxvol}(\Matrix{X}(:, J_r), I).
\end{equation*}
The second novelty is that the auxiliary indices $J_r$ are not updated with \textsc{maxvol}; instead, we use the \textsc{dominant-r} algorithm \cite{osinsky2018rectangular}, which is a modification of the strong rank-revealing QR decomposition \cite{gu1996efficient}:
\begin{equation*}
    J_r \gets \textsc{dominant-r}(\Matrix{X}(I,:), J_r).
\end{equation*}
The asymptotic complexity of \textsc{dominant-r} is identical to \textsc{maxvol}: $\mathcal{O}(n(k_r + r^2))$ operations for the initialisation, $\mathcal{O}(n k_r)$ operations per iteration, and the number of interchanges, once again, depends on the quality of the initial guess of $J_r$ and on how close we want the volume to be to the locally maximum volume. Applying the same steps to $\Matrix{X}^\intercal$ in order to update $J$ and $I_r$, we get a submatrix of large $r$-projective volume; the complexity is
\begin{equation*}
    \mathcal{O}\Big((m + n) (k_r + k_c + r^2) \cdot N_{\mathrm{iterations}} + (m + n)(k_r + k_c) \cdot N_{\mathrm{interchanges}}\Big).  
\end{equation*}
The bounds of the approximation errors tell us that $k_r$ and $k_c$ of order $r$ are sufficient.

Both cross-approximation algorithms scale linearly with $m$ and $n$. They do not require access to the whole $\Matrix{X}$ by operating on submatrices of size $m \times r$ and $r \times n$. This means that we can keep $\Matrix{X}$ out of memory and evaluate the submatrices on the fly. If $\mathrm{elem}(\Matrix{X})$ is the cost of computing a single element of $\Matrix{X}$, the complexities---with $k_r = \mathcal{O}(r)$ and $k_c = \mathcal{O}(r)$---become
\begin{equation*}
    \mathcal{O}\Big((m + n) (r^2 + r \cdot \mathrm{elem}(X)) \cdot N_{\mathrm{iterations}} + (m + n)r \cdot N_{\mathrm{interchanges}}\Big).
\end{equation*}

The fact that only a small portion of adaptively chosen entries of a matrix make it possible to construct a good low-rank approximation is impressive, but it would be even more so for tensors, since it becomes prohibitive to store all $\prod_{k=1}^d n_k$ elements for large $d$. In \cite{oseledets2010tt}, the maximum-volume principle is extended to the tensor-train (TT) format. This leads to a fast cross-approximation algorithm that is based on the successive application of \textsc{maxvol} to the unfoldings of the tensor. The complexity of the proposed method is 
\begin{equation*}
    \mathcal{O}\Big( dn(r^3 + r^2 \cdot \mathrm{elem}(\Tensor{X})) \cdot N_{\mathrm{iterations}} + dnr^2 \cdot N_{\mathrm{interchanges}} \Big),
\end{equation*}
which scales linearly with the number of dimensions $d$ --- merely $\mathcal{O}(dnr^2)$ entries of the tensor are sampled. In addition, the dependence on the rank has increased by one order compared to the matrix case. The explanation is in the TT decomposition itself: $d-2$ of its TT cores are tensors of size $r \times n \times r$, while the remaining two are matrices. The approximation errors of the TT-cross approximation in the maximum norm were studied in \cite{savostyanov2014quasioptimality, osinsky2019tensor}.

\section{Low-rank nonnegative approximation: More tests}
\label{appendix:nn_tests}

One more option to modify the method of alternating projections
\begin{equation*}
    \Matrix{Z}_{k+1} = \Proj{\Real_{+}^{m \times n}}{\Matrix{Y}_k}, \quad \Matrix{Y}_{k+1} \in \Proj{\mathcal{M}_r}{\Matrix{Z}_{k+1}}, \quad k \in \N_0,
\end{equation*}
is to compute the projection onto the translate of $\Real_{+}^{m \times n}$ so that $\Matrix{Z}_{k+1}$ ends up in the interior. We write the method of \textit{indentation-projection} as
\begin{equation*}
    \alpha_k = \beta \cdot \Norm{\min(0, \Matrix{Y}_k)}{max}, \quad \Matrix{Z}_{k+1} = \Proj{\alpha_k + \Real_{+}^{m \times n}}{\Matrix{Y}_k}, \quad \Matrix{Y}_{k+1} \in \Proj{\mathcal{M}_r}{\Matrix{Z}_{k+1}}, \quad k \in \N_0.
\end{equation*}
Basically, $\Matrix{Z}_{k+1} = \max(\alpha_k, \Matrix{Y}_k)$ so that the negative entries of $\Matrix{Y}_k$ are pushed farther than by the shift-projection method, but the positive entries stay intact. Such iterations are known to converge in a finite number of steps for two convex sets \cite{rami2007finite}.

We repeat the experiments from the main text about the two-component Smoluchowski equation using indentation-projections with $\beta = 3/4$. The results in Fig.~\ref{fig:smolukh_modified_ap_extended} and Tab.~\ref{tab:smolukh_modified_ap_extended} show that indentation-projections improve the convergence rates for \textsc{svd}, \textsc{rsvd}, and $\textsc{pvol}_c$ while increasing the approximation error three times. The warm-started $\textsc{ip}(3/4)$-$\textsc{vol}_w$ and $\textsc{ip}(3/4)$-$\textsc{pvol}_w$ keep stagnating, and the cold-started $\textsc{ip}(3/4)$-$\textsc{vol}_c$ exhibits irregular behaviour.

\begin{figure}[h!]
\centering
	\includegraphics[width=0.9\textwidth]{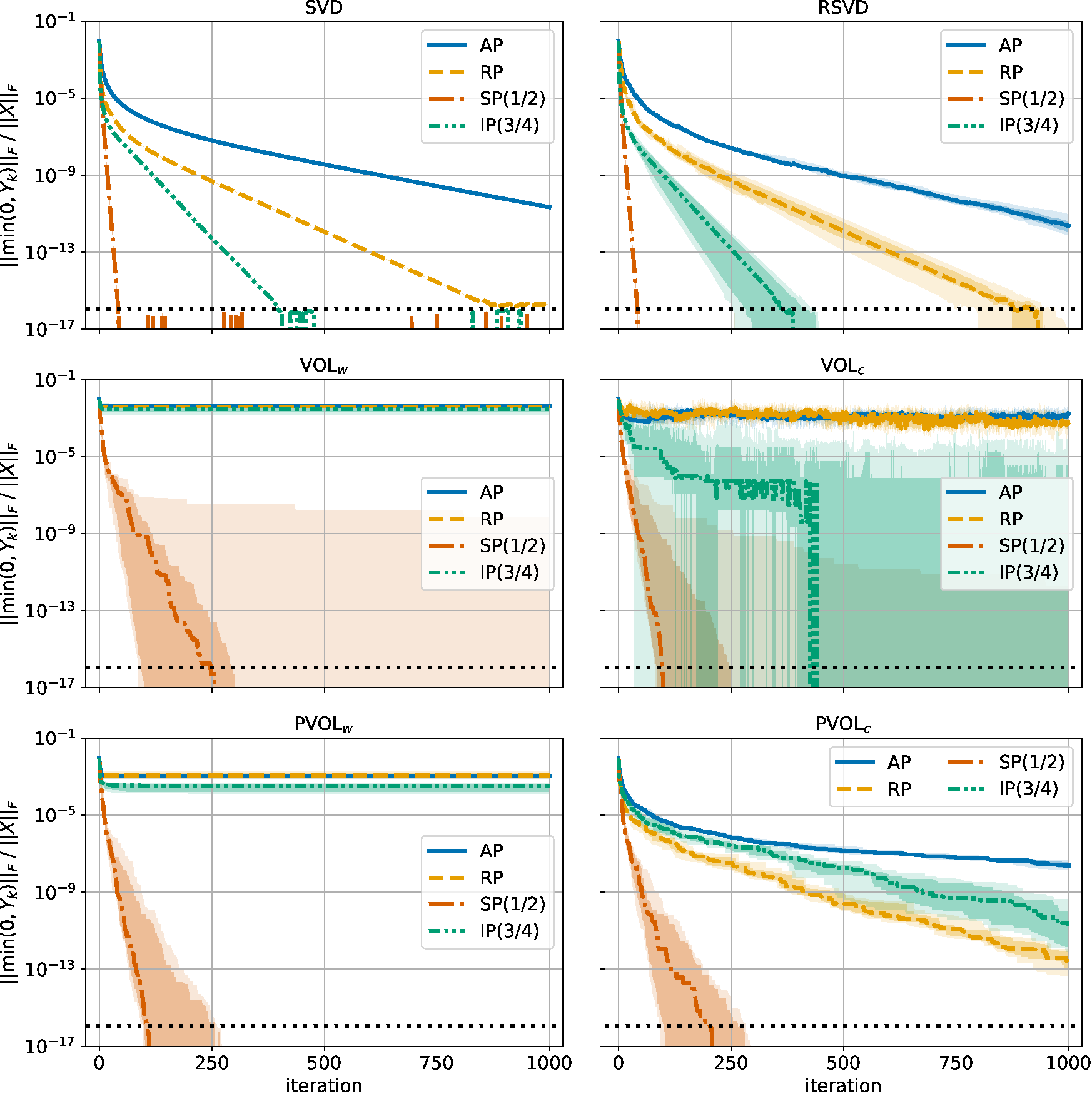}
\caption{Convergence of the low-rank iterates $\Matrix{Y}_k$ to the nonnegative orthant for the modified alternating-projection schemes with inexact low-rank projections: the median, the $25\%$ and the $10\%$ percentiles.}
\label{fig:smolukh_modified_ap_extended}
\end{figure}

\begin{table}[h!]
    \centering
    \caption{Median approximation errors of the low-rank iterates $Y_k$ after $10 / 1000$ steps of modified alternating-projection schemes with inexact low-rank projections.}
    \label{tab:smolukh_modified_ap_extended}
    \begin{tabular*}{\textwidth}{@{\extracolsep\fill}lcccccc@{}}
        \toprule
        Modification  & \textsc{svd} & \textsc{rsvd} & $\textsc{vol}_w$ & $\textsc{vol}_c$ & $\textsc{pvol}_w$ & $\textsc{pvol}_c$ \\
        \midrule
        \textsc{ap} & $1.13 / 1.13$ & $1.22 / 1.23$ & $1.10 / 1.10$ & $1.15 / 4.83$ & $1.14 / 1.14$ & $1.13 / 1.14$ \\
        \textsc{rp} & $1.19 / 1.19$ & $1.48 / 1.48$ & $1.33 / 1.33$ & $2.35 / 9.41$ & $1.24 / 1.24$ & $1.23 / 1.23$ \\
        $\textsc{sp}(1/2)$ & $5.78 / 5.78$ & $5.78 / 5.78$ & $5.55 / 5.76$ & $5.62 / 5.79$ & $5.60 / 5.79$ & $5.65 / 5.79$ \\
        $\textsc{ip}(3/4)$ & $3.11 / 3.11$ & $3.21 / 3.21$ & $4.41 / 4.86$ & $6.85 / 7.08$ & $3.11 / 3.11$ & $3.04 / 3.04$ \\
        \bottomrule\\
    \end{tabular*}
\end{table}

We also apply indentation-projections to compute a nonnegative QTT approximation of the mixture of Gaussian probability densities: the method converges in \emph{one} iteration and the approximation error grows $1.44$ times (see Fig.~\ref{fig:mixture_ip}).

\begin{figure}[h!]
\centering
	\includegraphics[width=0.9\textwidth]{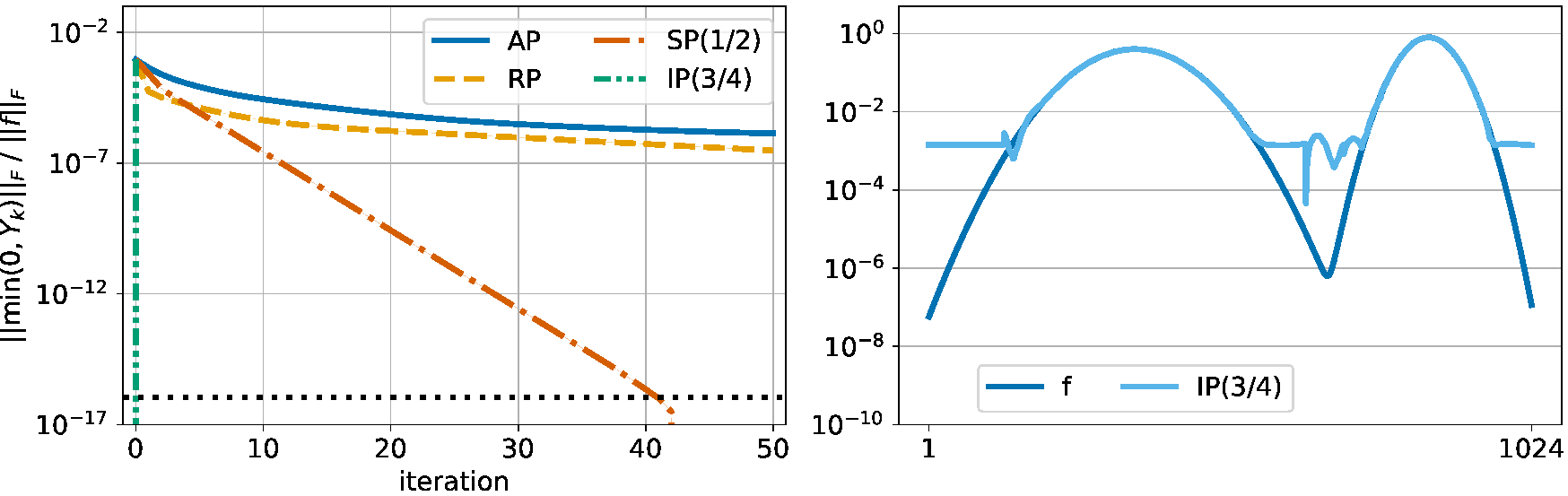}
\caption{The nonnegative QTT approximation obtained with the method of indentation-projections based on \textsc{ttsvd}.}
\label{fig:mixture_ip}
\end{figure}
\end{appendices}
\printbibliography[title={References for supplementary materials}]
\end{refsection}

\end{document}